\numberwithin{equation}{section}
\theoremstyle{plain}
\newtheorem{theorem}[subsection]{Theorem}
\newtheorem{proposition}[subsection]{Proposition}
\newtheorem{lemma}[subsection]{Lemma}
\newtheorem{corollary}[subsection]{Corollary}
\newtheorem{conjecture}[subsection]{Conjecture}
\newtheorem*{conj3-repeat}{Conjecture \ref{conj3}}
\newtheorem*{conjecture-1.1-repeat}{Conjecture \ref{conjecture-1.1}}
\newtheorem*{interval-sieve-repeat}{Theorem \ref{interval-sieve}}
\newtheorem*{conj1-repeat}{Conjecture \ref{conj1}}
\newtheorem*{stability-thm-repeat}{Theorem \ref{stability-thm}}
\newtheorem*{stability-thm-symmetric-repeat}{Theorem \ref{stability-thm-symmetric}}
\newtheorem*{stability-conj-repeat}{Conjecture \ref{stability-conj}}
\newtheorem*{ils-ostmann-repeat}{Theorem \ref{ils-ostmann}}
\newtheorem*{prog-thm-rpt}{Theorem \ref{prog-thm}}
\theoremstyle{definition}
\theoremstyle{remark}
\renewcommand{\leq}{\leqslant}
\renewcommand{\geq}{\geqslant}
\newsavebox{\proofbox}
\savebox{\proofbox}{\begin{picture}(7,7)  \put(0,0){\framebox(7,7){}}\end{picture}}
\newcommand{\md}[1]{\ensuremath{(\operatorname{mod}\, #1)}}
\newcommand{\mdsub}[1]{\ensuremath{(\mbox{\scriptsize mod}\, #1)}}
\newcommand{\mdlem}[1]{\ensuremath{(\mbox{\textup{mod}}\, #1)}}
\newcommand{\mdsublem}[1]{\ensuremath{(\mbox{\scriptsize \textup{mod}}\, #1)}}
\newcommand\Z{\mathbb{Z}}
\newcommand\R{\mathbb{R}}
\newcommand\N{\mathbb{N}}
\newcommand\A{\mathscr{A}}
\newcommand\B{\mathscr{B}}
\newcommand\unif{\operatorname{unif}}
\newcommand\Q{\mathbb{Q}}
\newcommand\eps{\varepsilon}
\begin{document}

\title{Inverse questions for the large sieve}

\author[Green]{Ben Green}
\address{Mathematical Institute\\
Radcliffe Observatory Quarter\\
Woodstock Road\\
Oxford OX2 6GG\\
England }
\email{ben.green@maths.ox.ac.uk}

\author[Harper]{Adam J Harper}
\address{Jesus College \\
Cambridge CB5 8BL\\
England }
\email{A.J.Harper@dpmms.cam.ac.uk}

\begin{abstract} 
Suppose that an infinite set $\A$ occupies at most $\frac{1}{2}(p+1)$ residue classes modulo $p$, for every sufficiently large prime $p$. The squares, or more generally the integer values of any quadratic, are an example of such a set. By the large sieve inequality the number of elements of $\A$ that are at most $X$ is $O(X^{1/2})$, and the quadratic examples show that this is sharp. The simplest form of the \emph{inverse large sieve problem} asks whether they are the only examples. We prove a variety of results and formulate various conjectures in connection with this problem, including several improvements of the large sieve bound when the residue classes occupied by $\A$ have some additive structure. Unfortunately we cannot solve the problem itself.
\end{abstract}

\maketitle

\begin{center}\emph{To Roger Heath-Brown on his 60th birthday}\end{center}

\setcounter{tocdepth}{1}	

\tableofcontents

\section{Introduction}

\textsc{Notation.} Most of our notation is quite standard. When dealing with infinite sets $\A$, we write $\A[X]$ for the intersection of $\A$ with the initial segment $[X] := \{1,\dots, X\}$.

Our primary aim in this paper is to study sets $\A$ of integers with the property that the reduction $\A \md{p}$ occupies at most $\frac{1}{2}(p+1)$ residue classes modulo $p$ for all sufficiently large primes $p$. It follows from the large sieve that $|\A[X]| \ll X^{1/2}$ for all $X$ (we will recall the details of this argument below). This is clearly sharp up to the value of the implied constant, as shown by taking $\A$ to be the set of squares or more generally the set of integer values taken by any rational quadratic, that is to say quadratic with rational coefficients. 

It has been speculated, most particularly by Helfgott and Venkatesh \cite[pp 232--233]{helfgott-venkatesh} and by Walsh \cite{walsh}, that quadratics provide the only examples of sets for which the large sieve bound is essentially sharp. See also \cite[Problem 7.4]{croot-lev}. One might call problems of this type the ``inverse large sieve problem''. Unfortunately, we have not been able to prove any statement of this kind, and our aims here are more modest. 

Suppose that $\A \md{p} \subset S_p$ for all sufficiently large primes $p$. Our first set of results consists of improvements to the large sieve bound when $S_p$ looks very much unlike a quadratic set modulo $p$, for example by having some additive structure.

\begin{theorem}\label{thm1.4}
Suppose that for each prime $p \leq X^{1/2}$ one has a set $S_p \subset \Z/p\Z$ of size $(p+1)/2$. Suppose there is some $\delta > 0$ such that, for each $p$, $S_p$ has at least $(\frac{1}{16} + \delta) p^3$ additive quadruples, that is to say quadruples $(s_1,s_2,s_3,s_4)$ with $s_1 + s_2 = s_3 + s_4$. Suppose that $\A \mdlem{p} \subset S_p$ for all $p$. Then $|\A[X]| \ll X^{1/2 - c\delta^2}$, where $c > 0$ is an absolute constant.\end{theorem}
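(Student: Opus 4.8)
The plan is to run the standard large sieve argument but to exploit the additive structure of the $S_p$ to gain in the ``local'' factor at each prime. Recall that the large sieve inequality gives, for any $Q \le X^{1/2}$,
\[
|\A[X]| \ll \Bigl( \sum_{q \le Q} \mu(q)^2 \prod_{p \mid q} h(p) \Bigr)^{-1}, \quad h(p) := \frac{p - |S_p|}{|S_p|},
\]
and with $|S_p| = (p+1)/2$ one has $h(p) = \frac{p-1}{p+1} \asymp 1$, so that the sum over squarefree $q \le Q$ is $\gg Q$ and one recovers $|\A[X]| \ll X^{1/2}$. To do better I would instead use the full strength of the large sieve in the form that, for each $p$, the Fourier transform $\widehat{1_{S_p}}$ is small off the zero frequency: the key local quantity is really $\sum_{a \ne 0} |\widehat{1_{S_p}}(a)|^2$ (or, after applying the arithmetic large sieve/Montgomery's inequality, a sum involving these), and the point is that $\sum_a |\widehat{1_{S_p}}(a)|^4$ is exactly $p^{-3}$ times the number of additive quadruples of $S_p$, normalised so that the ``quadratic residue'' set (for which $|S_p| = (p+1)/2$) has essentially the minimal possible count $\tfrac{1}{16} p^3$ of additive quadruples. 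Thus the hypothesis says $S_p$ is quantitatively bounded away from having the additive-energy profile of a quadratic-residue set.

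The core step is a local lemma: if $S_p \subset \Z/p\Z$ has $|S_p| = (p+1)/2$ and at least $(\tfrac{1}{16} + \delta)p^3$ additive quadruples, then $S_p$ must be ``spread out'' in Fourier space in a way that improves the large sieve weight at $p$ by a factor $1 - c\delta^2$. Concretely, writing $f = 1_{S_p} - \tfrac12$, one has $\sum_a |\widehat{f}(a)|^2 = \tfrac14 + O(1/p)$ and $\sum_{a\ne0} |\widehat{f}(a)|^4 \ge (\delta - o(1)) \cdot (\text{something})$; by Cauchy–Schwarz in the form $\max_{a \ne 0} |\widehat{f}(a)|^2 \ge \bigl(\sum_{a\ne0}|\widehat f(a)|^4\bigr) / \bigl(\sum_{a \ne 0}|\widehat f(a)|^2\bigr)$, the large additive energy forces at least one nonzero frequency $a_p$ with $|\widehat{1_{S_p}}(a_p)| \gg \delta$. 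This means $\A$, restricted mod $p$, correlates with the linear phase $x \mapsto e(a_p x/p)$; equivalently, for a positive proportion $\gg \delta$ of residue classes, $S_p$ over- or under-represents them relative to the arithmetic progression structure. Feeding this extra Fourier mass back into the large sieve (via the weighted version, choosing the sieve weights to detect the frequencies $a_p$) produces, for each of $\gg Q$ primes simultaneously, a multiplicative saving, and iterating/combining these savings across $\log$-many scales of primes yields the exponent $\tfrac12 - c\delta^2$. The $\delta^2$ (rather than $\delta$) is the expected loss from the two applications of Cauchy–Schwarz.

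In more detail, the steps in order are: (i) establish the local lemma above, identifying for each $p \le X^{1/2}$ a frequency $a_p \ne 0$ with $|\widehat{1_{S_p}}(a_p)| \gg \delta$; (ii) reinterpret this as saying that $\A$ mod $p$ is concentrated in the union of $\le (\tfrac12 - c\delta)p$ residue classes after a ``linear twist'', i.e. that $\{a_p n : n \in \A\}$ mod $p$ avoids a set of size $\gg \delta p$ — more precisely one gets a bias, not outright avoidance, so one should argue probabilistically; (iii) run the large sieve (or the dual, Montgomery's form) with these biases, getting the weight $\sum_{q \le Q} \mu(q)^2 \prod_{p\mid q} h(p)$ replaced by something of size $\gg Q^{1 + c\delta^2}$, whence $|\A[X]| \ll X^{1/2 - c\delta^2}$ on taking $Q = X^{1/2}$. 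The main obstacle, I expect, is step (ii)–(iii): a single large Fourier coefficient at $p$ gives only an $L^2$-type bias in $\A$ mod $p$, not a clean reduction in the number of occupied classes, so one cannot simply re-apply Theorem-style sieve bounds with a smaller $|S_p|$. One must instead track these biases through the sieve directly — essentially showing that correlating with a nontrivial character mod $p$, for many $p$, is itself incompatible with $|\A[X]| \gg X^{1/2}$, quantitatively. Controlling the interaction between the different frequencies $a_p$ (they could conspire, e.g. all coming from a single global quadratic), and ensuring the savings at different primes genuinely multiply rather than overlap, is the delicate point; I would handle it by a second-moment computation over $\A[X]$ of $\bigl| \sum_{p} c_p e(a_p n / p) \bigr|^2$ with well-chosen weights $c_p$, exactly as in the proof of the large sieve itself, where the diagonal now carries the $\delta^2$ gain.
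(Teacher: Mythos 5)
Your local lemma (large additive energy of $S_p$ forces a large nonzero Fourier coefficient $\widehat{1_{S_p}}(a_p)$) is correct, but the argument breaks at your steps (ii)--(iii), in two distinct places. First, the containment $\A \md{p} \subset S_p$ does \emph{not} transfer the Fourier bias of $S_p$ to $\A$: the reduction $\A \md{p}$ could occupy a Fourier-uniform subset of $S_p$, in which case $\sum_{n \in \A[X]} e(a_p n/p)$ is small no matter how large $\widehat{1_{S_p}}(a_p)$ is. Of course, in that scenario $\A$ occupies markedly fewer than $\frac{1}{2}(p+1)$ classes and one should instead win via the larger sieve; but making this dichotomy quantitative --- either $|\A[X]| \leq X^{1/2-\kappa}$, or else the fibres of $\A$ above most primes are $\eta$-uniform, so that the fibre-counting function of $\A$ is genuinely close (in $L^4$ of the Fourier transform) to the normalised indicator of $S_p$ --- is exactly the missing ingredient, and it is where the paper starts (Lemma \ref{lem1.1}, a larger-sieve argument, feeding into Corollary \ref{cor1.3}). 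You acknowledge the issue ("one gets a bias, not outright avoidance") but supply no mechanism for it.

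Second, and more seriously, even granting that $|\sum_{n\in\A[X]}e(a_p n/p)| \gg \delta\,|\A[X]|$ for almost every $p \leq X^{1/2}$, the second-moment computation you propose cannot produce a power saving. Plugging these correlations into the analytic large sieve over the $1/Q^2$-spaced frequencies $a_p/p$ gives $\pi(Q)\,\delta^2\,|\A[X]|^2 \leq (X+Q^2)\,|\A[X]|$, i.e.\ $|\A[X]| \ll \delta^{-2}X^{1/2}\log X$ at $Q=X^{1/2}$ --- weaker than the large sieve bound, not stronger. With $Q \asymp X^{1/2}$ the terms $X$ and $Q^2$ are comparable, so a bounded-factor improvement in the diagonal cannot yield a factor $X^{-c\delta^2}$, and nothing in your sketch makes the savings at different primes "multiply". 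The statement you would actually need --- that constant-size correlation with one linear phase modulo each prime forces $|\A[X]| \ll X^{1/2-c}$ --- is itself a theorem of the same depth as the one being proved (compare Theorem \ref{prog-thm}, whose proof occupies all of Section 5), and it does not follow from the duality argument underlying the large sieve. The paper's route is structurally different: after the uniform-fibres dichotomy it lifts the local energy gains to a lower bound on the additive energy of $\A[X]$ itself (Lemma \ref{lift-lem} used "in reverse", plus Corollary \ref{cor1.3}), and then runs a differenced larger sieve (Proposition \ref{diff-largersieve}): many popular differences $h$ with $|A\cap(A+h)| \gg |A|/K$, the containment $A\cap(A+h) \md{p} \subset S_p\cap(S_p+h)$, Pollard's theorem to show that for some $h$ the density of $S_p\cap(S_p+h)$ drops below $\frac12-c$ for many $p$, and finally the larger sieve --- the one tool here that genuinely beats $X^{1/2}$ once the local density is below $\frac12$. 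Some device of this kind is needed; the plain large sieve with twisted frequencies will not give the exponent gain.
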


The condition of having at least $(\frac{1}{16} + \delta) p^3$ additive quadruples will {\em not} be satisfied by a generic (e.g. randomly selected) set $S_p \subset \Z/p\Z$ of size $(p+1)/2$, which will have $(\frac{1}{16} + o(1)) p^3$ such quadruples for large $p$. But it is a rather general condition corresponding to $S_p$ being additively structured, and certainly we are not aware of any previous improvements to the large sieve bound under comparably general conditions.

An extreme case of the preceding theorem is that in which $S_p$ is in fact an interval. Here a simple calculation, reproduced later, shows that Theorem \ref{thm1.4} is applicable with the choice $\delta=1/48$, but we can do rather better.

\begin{theorem}\label{interval-sieve}
Suppose that $\A$ is a set of integers and that, for each prime $p$, the set $\A \mdlem{p}$ lies in some interval $I_p$. Let $\eps > 0$ be arbitrary. Then
\begin{enumerate} 
\item If $|I_p| \leq (1 - \eps) p$ for at least a proportion $\eps$ of the primes in each dyadic interval $[Z,2Z]$ then $|\A[X]| \ll_{\eps} (\log \log X)^{C\log(1/\eps)}$, where $C > 0$ is some absolute constant;
\item If $|I_p| \leq \frac{p}{2}$ for all primes then $|\A[X]| \ll (\log \log X)^{\gamma + o(1)}$, where $\gamma = \frac{\log 18}{\log(3/2)} \approx 7.129$;
\item If $|I_p| = [\alpha p, \beta p]$ for all primes $p$ and for fixed $0 \leq \alpha < \beta < 1$ \textup{(}not depending on $p$\textup{)} then $|\A| = O_{\beta - \alpha}(1)$;
\item There are examples of infinite sets $\A$ with $|I_p| \leq (\frac{1}{2} + \eps) p$ for all $p$.
\end{enumerate}
\end{theorem}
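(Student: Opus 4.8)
The plan is to treat the four parts by separate arguments: (i) and (ii) rest on a recursive sieve and are the substantial ones, (iii) is elementary, and (iv) is a direct construction. For part (iv) I would build $\A$ and the intervals $I_p$ simultaneously, by a greedy recursion on widely separated scales. Construct nested finite sets $A_1\subset A_2\subset\cdots$, integers $y_1<y_2<\cdots$, and arcs $I_p\subseteq\Z/p\Z$ (with $I_p$ becoming permanent once $p\le y_m$), maintaining throughout that $|I_p|\le(\tfrac12+\eps)p$, that $A_m\bmod p\subseteq I_p$ for all primes $p\le y_m$, and that $y_m>\max\{3,(\max A_m)/\eps\}$. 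Start with $A_1=\{1\}$, a suitable $y_1$, and singleton $I_p=\{1\bmod p\}$ for $p\le y_1$. At stage $m+1$: the integers $n$ with $n\bmod p\in I_p$ for every prime $p\le y_m$ form a nonempty union of residue classes modulo $\prod_{p\le y_m}p$ (nonempty since it contains $A_m$), hence an infinite set, so pick one, $a_{m+1}$, larger than $\max A_m$; set $A_{m+1}=A_m\cup\{a_{m+1}\}$ and $y_{m+1}>\max\{y_m,(\max A_{m+1})/\eps\}$. For each prime $p$ with $y_m<p\le y_{m+1}$ one must then find an arc $I_p$ of at most $(\tfrac12+\eps)p$ residues containing $A_{m+1}\bmod p$; but since $p>y_m>(\max A_m)/\eps$ we have $\max A_m<\eps p$, so $A_m\bmod p$ is the honest point set $\{a_1,\dots,a_m\}\subseteq[1,\eps p)$ — a cluster of diameter $<\eps p$ abutting $0$ — and only the single further residue $r:=a_{m+1}\bmod p$ is at issue, whereupon an elementary case check (the interesting case being $r>(\tfrac12+\eps)p$, handled by the wrap-around arc $[r,p-1]\cup[0,\max A_m]$ of length $<\tfrac12 p$) supplies a suitable $I_p$. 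Then $\A:=\bigcup_m A_m$ is infinite, and for each prime $p$, taking $m$ with $y_m\ge p$, every element of $\A$ reduces into $I_p$ — elements of $A_m$ by the construction of $I_p$, later elements by the congruences imposed when they were adjoined — so $\A\bmod p$ lies in $I_p$, an interval of $\le(\tfrac12+\eps)p$ residues. Only the wrap-around case uses $\eps>0$, and I would not try to push to the (unclaimed) borderline $\eps=0$.

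Part (iii) is elementary: if $n\in\A$ then $n\bmod p\in[\alpha p,\beta p]$ for all primes $p$, and taking $p>n$ (so $n\bmod p=n$) forces $\alpha p\le n\le\beta p$. When $\alpha>0$ this fails for every $p>n/\alpha$, so $\A=\varnothing$; when $\alpha=0$ it forces $(n,n/\beta)$, an interval of length $\asymp_\beta n$, to be free of primes, which by the prime number theorem (or Bertrand's postulate when $\beta\le\tfrac12$) holds only for $n$ bounded in terms of $\beta$. Either way $|\A|=O_{\beta-\alpha}(1)$.

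For parts (i) and (ii) the ordinary large sieve gives only $|\A[X]|\ll_\eps X^{1/2}\log X$, far too weak, so one must iterate, in the spirit of the recursive sieves of Croot--Lev and of Walsh. Writing $f(X)$ for the maximal $|\A[X]|$ under the hypothesis of (ii), I would try to establish a recursion of the shape $f(X)\le 18\,f\bigl(\exp((\log X)^{2/3})\bigr)$, by using the interval condition modulo primes in a suitable range below $X$ to show that $\A[X]$ must break into at most $18$ clusters, each lying in an interval of length $\exp((\log X)^{2/3})$ and, after translation, again satisfying the hypothesis of (ii) at that smaller scale. Iterating this down to bounded scale — some $\asymp\log_{3/2}\log\log X$ steps — gives $f(X)\le 18^{\,\log_{3/2}\log\log X+O(1)}=(\log\log X)^{\log 18/\log(3/2)+o(1)}$, which is (ii); part (i) runs the same scheme with the weaker hypothesis, losing an $\eps$-dependent factor and achieving only a milder scale reduction at each step, whence the exponent $C\log(1/\eps)$.

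The real obstacle is this recursive step for (i) and (ii): extracting from the interval hypothesis at scales near $X$ a covering of $\A[X]$ by a bounded number of translates of sets still satisfying an interval hypothesis (not merely ``few residue classes'') at a much smaller scale, and doing so with the sharp constants — the value $18$, hence $\gamma=\log 18/\log(3/2)$, appears to require an essentially extremal analysis of how residue-intervals modulo primes of size $\approx X$ can overlap. Part (iv), by contrast, has a single conceptual point: the intervals must be chosen adaptively (after the elements) on well-separated scales, so that the accumulated elements occupy a negligible cluster — a non-adaptive choice is useless, as part (iii) illustrates.
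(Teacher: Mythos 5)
Your parts (iii) and (iv) are fine. The construction in (iv) is essentially the paper's (where $\A$ consists of primorials $a_i=\prod_{p\le X_i}p$ with $X_i$ growing very fast): in both versions the previously chosen elements occupy a cluster of diameter $<\eps p$ near $0$, one ``current'' residue is uncontrolled, and all later elements are forced into $I_p$ (in the paper automatically, since they are $\equiv 0$; in yours by CRT), and your nonemptiness/infiniteness argument for the admissible residue classes is correct. Your argument for (iii) is actually more elementary than the paper's, which deduces the statement from a Jutila-type equidistribution result for $\{x/p\}$ over primes $p\in[x^{1/2},2x^{1/2}]$; you instead use primes just above the element itself (Bertrand/PNT in the interval $(n,n/\beta)$ when $\alpha=0$), which is perfectly valid under the hypothesis ``for all primes $p$'' (you should just say a word about $n\le 0$, since $\A$ is only assumed to be a set of integers).

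For the main parts (i) and (ii), however, what you have is the iteration framework and the final bookkeeping (the recursion $f(X)\le 18\,f(\exp((\log X)^{2/3}))$ and the count of $\asymp\log_{3/2}\log\log X$ steps, which indeed yields $\gamma=\log 18/\log(3/2)$), but not the step that carries all the content, and you acknowledge this yourself. The missing ingredient is Lemma \ref{it-step}: one shows that a proportion $\ge\frac1{18}-o(1)$ of $A$ (for (ii); $\gg\eps^5$ for (i)) lies in a single subinterval of length $Y^{10}$ with $Y=\exp(\log^{2/3+o(1)}X)$ --- note this is ``one heavy cluster'', not a covering by $18$ clusters, though it gives the same recursion. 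The mechanism is not an ``extremal analysis of how residue-intervals modulo primes of size $\approx X$ can overlap'': the relevant primes have size $Y$, far below $X$, and the argument is Fourier-analytic. Lemma \ref{large-fourier} converts the hypothesis $A\,\mdlem{p}\subset I_p$, $|I_p|\le p/2$, via the nonnegative trigonometric polynomial $1-2\cos\theta+\cos 2\theta$, into $|\sum_{a\in A}e(ka/p)|\ge|A|/3$ for some $k\in\{1,2\}$; pigeonholing in $k$ over the primes $p\in[Y,2Y]$ produces the factor $\frac1{18}=\frac12\cdot\frac19$ (so the constant $18$ comes from Fourier coefficients, not from overlap combinatorics). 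Expanding the square and invoking Vinogradov/Jutila bounds for $\sum_{n\sim Y}e(x/n)$ (Proposition \ref{vino-est}) shows that pairs $a,a'$ with $|a-a'|\ge Y^{10}$ contribute negligibly --- and it is exactly the strength of these exponential-sum estimates that dictates the scale reduction $\log X\mapsto\log^{2/3+o(1)}X$, hence the base $3/2$ in the exponent. Without this (or an equivalent substitute) the recursive step is unsubstantiated, so the proposal does not constitute a proof of (i) or (ii).
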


This improves on the results of an unpublished preprint~\cite{green} by the first author, in which it was shown that one has $|\A[X]| \ll X^{1/3+o(1)}$ under the condition (ii). 

Theorem \ref{thm1.4} also covers the case in which $S_p$ is an arithmetic progression of length $\frac{1}{2}(p+1)$, where the common difference of this arithmetic progression may depend on $p$. Here again one could apply Theorem \ref{thm1.4} with the choice $\delta=1/48$, but we can also handle this situation with a less restrictive condition on the size of $S_p$. 

\begin{theorem}\label{prog-thm}
Let $\eps > 0$. Suppose that $\A$ is a set of integers and that, for each prime $p \leq X^{1/2}$, the set $\A \mdlem{p}$ lies in some arithmetic progression $S_p$ of length $(1-\eps)p$. Then $|\A[X]| \ll_{\eps} X^{1/2 - \eps'}$, where $\eps' > 0$ depends on $\eps$ only.
\end{theorem}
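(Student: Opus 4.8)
The plan is to reduce Theorem \ref{prog-thm} to the machinery behind Theorem \ref{thm1.4}. Two preliminary reductions are worth making first. Since an arithmetic progression of length $(1-\eps)p$ is contained in one of length $(1-\eps_0)p$ for any $\eps_0 \leq \eps$, the statement for a single small $\eps_0$ implies it for all $\eps \geq \eps_0$; so we may assume $\eps$ is smaller than any fixed absolute constant. Similarly, since lengthening a progression only weakens the hypothesis, we may assume $|S_p| = \lceil (1-\eps)p\rceil =: L_p$ exactly for each prime $p \leq X^{1/2}$, and we write $d_p$ for the common difference of $S_p$, which is automatically a unit modulo $p$.

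The local input we need is an explicit large Fourier coefficient. Writing $e_p(x) := e^{2\pi i x/p}$ and summing a geometric progression, one has $|\widehat{1_{S_p}}(u)| = |\sin(\pi L_p d_p u/p)|/|\sin(\pi d_p u/p)|$ for $u \not\equiv 0 \mdlem{p}$, and the choice $u = u_p := d_p^{-1} \bmod p$ gives
\[
|\widehat{1_{S_p}}(u_p)| = \frac{\sin(\pi L_p/p)}{\sin(\pi/p)} = \frac{\sin(\pi \eps)}{\pi}\,p\,(1 + o(1)) \gg_\eps p .
\]
Translating via the identity (number of additive quadruples of $S_p$) $= p^{-1}\sum_u |\widehat{1_{S_p}}(u)|^4$ — summing, say, over the $\asymp 1/\eps$ frequencies $s d_p^{-1}$ with $|s| \lesssim 1/\eps$ — one finds that $S_p$ has at least $(1-\eps)^4 p^3 + c\eps^3 p^3$ additive quadruples. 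The raw count here is comfortably above $\tfrac{1}{16}p^3$ (indeed close to $p^3$ for small $\eps$); the useful feature is rather the surplus of order $\eps^3 p^3$ over the count $(1-\eps)^4 p^3$ that a random set of the same size would produce.

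From this point the plan is to run the proof of Theorem \ref{thm1.4} with $u_p$ playing the role of the distinguished nonzero frequency that that argument extracts from its hypothesis, feeding in $\A \mdlem{p} \subset S_p$ and summing the resulting large-sieve and Cauchy--Schwarz estimates over $p \leq X^{1/2}$ to reach a bound of the shape $|\A[X]| \ll_\eps X^{1/2 - \eps'}$. The one substantive obstacle — and where I expect the real work to lie — is that Theorem \ref{thm1.4} is calibrated to $|S_p| = \tfrac12(p+1)$, whereas here $|S_p|$ can be as large as $(1-\eps)p$. One must therefore re-examine each step of that argument in which the size of $S_p$ enters (in particular the passage from a large Fourier coefficient to the statement that $\A \mdlem{p}$ misses a positive proportion of some progression, and the ensuing sieve), checking both that it stays valid for $|S_p| \in [p/2, (1-\eps)p)$ and — crucially — that it still yields a genuine power saving rather than merely recovering $X^{1/2}$. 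The exponent one gets will be a fixed positive power of the surplus density, hence of size $\eps^{O(1)}$: this degrades to $0$ as $\eps \to 0$ but is positive for every fixed $\eps > 0$, which is exactly what the theorem asserts. The remaining points are routine: $u_p$ is well defined since $d_p$ is a unit mod $p$; primes below any fixed bound contribute $O_\eps(1)$ and can be discarded; and primes $p$ for which $\A \mdlem{p}$ happens to lie in a shorter progression only help.
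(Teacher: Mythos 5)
There is a genuine gap, and it sits exactly where you flagged it and then deferred it: the machinery of Theorem \ref{thm1.4} does not survive the passage from $|S_p|=\tfrac12(p+1)$ to $|S_p|=(1-\eps)p$, and no single ``re-examination'' of its steps produces a power saving. Two steps break irreparably. First, the transfer of structure from $S_p$ to $A$ in that proof goes through the uniform-fibres dichotomy (Lemma \ref{lem1.1} feeding Corollary \ref{cor1.3}): one compares the Cauchy--Schwarz lower bound $\sum_a |A(a,p;X)|^2\geq |A|^2/|S_p|$ against the larger-sieve upper bound, and only when $|S_p|\approx p/2$ and $Q\approx X^{1/2}$ are these two nearly forced to agree, which is what yields ``either a power saving or uniform fibres''. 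At density $1-\eps>\tfrac12$ the lower bound is too weak and no such dichotomy exists, so your computation of the energy surplus of $S_p$ (which is fine, of order $\eps^3p^3$) cannot be transferred to a surplus for $E_p(A,A)$ by that route. Second, even granted $E(A,A)\geq |A|^3/K$, Proposition \ref{diff-largersieve} itself assumes $|S_p|\leq\tfrac12(p+1)$: its conclusion comes from the larger sieve applied to $A\cap(A+h)\subset S_p\cap(S_p+h)\md{p}$ with occupied density pushed below $\tfrac12-c$, and the larger sieve only beats $X^{1/2}$ below density $\tfrac12$. Starting from density $1-\eps$, one differencing step (via Pollard, Lemma \ref{add-comb}) only lowers the density by a factor $1-c\eps$, which is nowhere near enough.

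The missing idea is an iteration, and it is what the paper actually does. One avoids fibre-uniformity altogether by extracting the large Fourier coefficient directly from $1_A$ (Lemma \ref{large-fourier}, dilated to progressions) and lifting with Lemma \ref{lift-lem}; this gives Lemma \ref{progr-energy}, valid for \emph{any} subset of $A$, since every subset still reduces into the progression $S_p$. One then iterates: from the energy bound and Lemma \ref{lem3.5} get a large set $H$ of popular differences, use the analogue of Lemma \ref{lem2.8} for sets of density up to $1-\eps$ (Lemma \ref{progr-goodh}) to choose $h\in H$ with $\sum_p\frac{\log p}{p}\frac{|S_p^i\cap(S_p^i+h)|}{p}<(1-c\eps)\sum_p\frac{\log p}{p}\frac{|S_p^i|}{p}$, and pass to $A_{i+1}=A_i\cap(A_i+h)$, $S_p^{i+1}=S_p^i\cap(S_p^i+h)$. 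Because $A_i\subset A$, Lemma \ref{progr-energy} re-applies at every stage with the \emph{original} $\eps$, the cardinality loss per step is an affordable $X^{O(K^{-1/\eps})}$, and after $O(1/\eps)$ steps the density of $S_p^i$ falls below $\tfrac12-c$, at which point the larger sieve (Theorem \ref{larger-sieve}) gives $|A_i|\ll X^{1/2-c/2}$ and hence the contradiction. Without this repeated-intersection scheme (or a substitute for it), your plan as written stalls precisely at the point you labelled ``where I expect the real work to lie''.
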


We are not aware of any previous results improving the large sieve bound $X^{1/2}$ when the $S_p$ are arbitrary arithmetic progressions, even for $\eps = 1/2$.

\vspace{11pt}
After proving the foregoing results, we turn to the ``robustness'' of the inverse large sieve problem. The aim of these results is to show that if $|\A \md{p}| \leq \frac{1}{2}(p+1)$ (or if similar conditions hold), if $|\A[X]| \approx X^{1/2}$, and if $\A$ is even vaguely close to quadratic in structure, then it must in fact approximate a quadratic very closely. Our proof methods here lead to some complicated dependencies between parameters, so we do not state and prove the most general result possible, settling instead for a couple of statements that have relatively clean formulations.

The first and main one concerns finite sets. Here, and henceforth in the paper, we say that a rational quadratic $\psi$ has \emph{height at most $H$} if it can be written as $\psi(x) = \frac{1}{d}(ax^2 + bx + c)$ with $a,b,c,d \in \Z$ and $\max(|a|, |b|, |c|, |d|) \leq H$.
\begin{theorem}\label{stability-thm}
Let $X_0 \in \N$, and let $\eps > 0$. Let $X \in \N$ be sufficiently large in terms of $X_0$ and $\eps$, and suppose that $H \leq X^{1/8}$. Suppose that $A,B \subset [X]$ and that $|A \md{p}| + |B \md{p}| \leq p+1$ for all $p \in [X_0, X^{1/4}]$. Then, for some absolute constant $c > 0$, one of the following holds:
\begin{enumerate}
\item \textup{(Better than large sieve)} Either $|A \cap [X^{1/2}]|$ or $|B \cap [X^{1/2}]|$ is $\leq X^{1/4 - c\eps^3}$;
\item \textup{(Behaviour with quadratics)} Given any two rational quadratics $\psi_A,\psi_B$ of height at most $H$, either $|A \setminus \psi_{A}(\Q)|$ and $|B\setminus \psi_{B}(\Q)| \leq HX^{1/2 - c}$, or else at least one of $|A \cap \psi_{A}(\Q)|$ and $|B \cap \psi_{B}(\Q)|$ is bounded above by $HX^{1/4 + \eps}$.
\end{enumerate}
\end{theorem}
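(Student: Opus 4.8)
The plan is to set up a large-sieve argument for the pair $(A,B)$ that exploits the hypothesis $|A \md p| + |B \md p| \leq p+1$, and then run a dichotomy: either the sieve already gives a power saving (case (i)), or the ``extremizing'' structure forces both $A$ and $B$ to be captured by quadratics up to the stated error. First I would recall the mechanism by which the large sieve produces the bound $X^{1/2}$: for a single set $\A$ with $|\A \md p| \leq (p+1)/2$, one has $|\A[X]|^2 \ll \sum_{p \leq X^{1/2}} p / (p - |\A \md p|) \cdot (\text{something})$, more precisely the dual form gives $|\A[X]|(Q^2 + |\A[X]|) \gg \sum_{q \leq Q} \sum_{a} |\widehat{1_{\A[X]}}(a/q)|^2$ restricted to the omitted classes. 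The key quantitative point, which I would isolate as a lemma, is that if the sieve bound is \emph{nearly} attained --- $|\A[X]| \gg X^{1/2-o(1)}$ --- then the Fourier coefficients $\widehat{1_{\A[X]}}(a/p)$ must be large for many fractions $a/p$ with denominator $p$ a prime up to $X^{1/2}$; quantitatively, the missing residues cannot be ``spread out'' or the sieve would do better than $X^{1/2}$. This is where I would try to leverage Theorem~\ref{thm1.4} and Theorem~\ref{prog-thm}: if $S_p := (\A \md p)$ has more than $(\frac1{16}+\delta)p^3$ additive quadruples then we already win, so in the surviving case $S_p$ has close to the minimum number of additive quadruples for a set of its size, which (by the converse direction of the relevant additive-combinatorial input, e.g.\ an inequality of the Katz--Koester / Pollard type on the number of additive quadruples) forces $S_p$ to behave like a ``quadratic residue pattern'' --- the essential observation being that the set of values of a quadratic modulo $p$ has exactly the extremal additive energy.

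Next I would translate the two-set hypothesis into the single-set framework. The inequality $|A \md p| + |B \md p| \leq p+1$ says that modulo each prime, $A$ and $B$ together omit at least $p-1$ residues in the sense that their ``complements'' have total size $\geq p-1$; equivalently, writing $S_p = A \md p$ and $T_p = B \md p$, we have $|S_p| + |T_p| \leq p+1$. I would apply a bilinear/tensor version of the large sieve to the product configuration, or more simply apply the ordinary large sieve to $A$ using the sets $S_p$ and separately to $B$ using $T_p$, noting that $|S_p| \leq (p+1)/2$ or $|T_p| \leq (p+1)/2$ must hold for each $p$ (one of the two is at most half), and partition the primes accordingly into $\mathcal{P}_A$ and $\mathcal{P}_B$. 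On whichever of $\mathcal{P}_A, \mathcal{P}_B$ is larger (say it has $\geq \frac12$ of the primes in $[X_0, X^{1/4}]$) we get a large sieve bound; if that bound already beats $X^{1/4-c\eps^3}$ for the corresponding set, we are in case (i). So the nontrivial branch is: \emph{both} $|A \cap [X^{1/2}]|$ and $|B \cap [X^{1/2}]|$ exceed $X^{1/4 - c\eps^3}$, which forces, for a positive-density set of primes $p \leq X^{1/4}$, that $S_p$ (resp.\ $T_p$) is additively extremal, hence close to a quadratic progression $\{y : y \equiv \ell_p(z) \text{ for some } z, \text{ a quadratic residue condition}\}$ modulo $p$.

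The heart of the argument --- and the step I expect to be the main obstacle --- is the passage from ``$\A \md p$ looks quadratic for many primes $p$'' to ``$\A$ is globally close to the value set of a single rational quadratic $\psi$ of bounded height''. For the forward direction (given $\psi_A$, bound $|A \setminus \psi_A(\Q)|$): the set $A \setminus \psi_A(\Q)$ still satisfies the residue hypothesis with $S_p$, but now its reductions must \emph{avoid} the quadratic pattern $\psi_A \md p$ that $A$'s reductions concentrate on, so $(A \setminus \psi_A(\Q)) \md p$ lives in a set much smaller than $(p+1)/2$ for many primes --- I would quantify this to get $|A \setminus \psi_A(\Q)| \ll H X^{1/2 - c}$ via the large sieve, the $H$ factor entering because controlling where $\psi_A \md p$ sits requires $p$ to avoid the $O(H)$ bad primes dividing the discriminant/leading coefficient. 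For the reverse implication in case (ii), when the conclusion ``$|A\setminus\psi_A|, |B\setminus\psi_B|$ small'' fails, I must produce the bound $HX^{1/4+\eps}$ on $|A \cap \psi_A(\Q)|$ (or the $B$ analogue): here $A \cap \psi_A(\Q)$ is a set of values of a fixed quadratic, so its elements up to $X$ correspond to integers (preimages) up to $O(\sqrt{HX})$, and the constraint becomes a sieve problem for those preimages --- one shows the preimage set occupies few residue classes modulo many primes (because $\psi_A \md p$ must fit inside $S_p$, which is itself quadratic-like but for a \emph{possibly different} quadratic, and two distinct quadratics modulo $p$ share $O(1)$ values), yielding $\ll H^{1/2} X^{1/4+o(1)}$ by a second application of the large sieve. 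The delicate bookkeeping is tracking the height $H$ through the ``distinct quadratics meet in few points mod $p$'' step and through the choice of which primes are ``good'', and ensuring the exponent losses from the additive-combinatorial stability estimates (with their $\delta^2$-type savings from Theorem~\ref{thm1.4}) combine to leave a genuine power of $X$ saving $c\eps^3$; I would expect to need $X$ large in terms of $X_0$ precisely to absorb the $O_{X_0}(1)$ primes below $X_0$ that are excluded from the sieve, and $H \leq X^{1/8}$ to keep the $HX^{1/2-c}$ and $HX^{1/4+\eps}$ errors meaningfully below the trivial bounds.
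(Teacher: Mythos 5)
There is a genuine gap, and it sits exactly where you flag ``the main obstacle'': the passage from local (mod $p$) information to global membership in $\psi_A(\Q)$. Your proposed mechanism --- that $(A \setminus \psi_{A}(\Q)) \md{p}$ must \emph{avoid} the residues of $\psi_A \md{p}$, so that this set is trapped in few classes and the large sieve gives $HX^{1/2-c}$ --- is false: lying outside $\psi_A(\Q)$ is a global property invisible modulo any single prime, and an integer $x \notin \psi_A(\Q)$ may perfectly well satisfy $x \in (\psi_A(\Q)\cap\Z) \md{p}$ for many (even most) primes $p$, so the sieve has nothing to bite on. The paper's proof supplies the missing bridge as follows. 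After Lemma \ref{large-sieve-app} (an Elsholtz-style larger sieve argument using the inequality $\frac1a+\frac1b \geq 4+2(1-2a)^2+2(1-2b)^2$, which is also where the exponent $c\eps^3$ in case (i) comes from), one knows $|A\md{p}| \leq (\frac12+\eps)p$ for $99\%$ of $p \sim X^{1/4}$. Setting $T_p := A\md{p} \cap (\psi(\Q)\cap\Z)\md{p}$ and $U_p$ its complement in $A \md{p}$, the hypothesis $|A\cap\psi_A(\Q)| > HX^{1/4+\eps}$ forces $|U_p| \leq 2\eps p$ for $98\%$ of these primes (else a large sieve on the preimage set $\{x : \psi(x/a) \in A\cap\psi(\Q)\} \subset [-O(HX^{1/2}), O(HX^{1/2})]$ contradicts that hypothesis). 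One then splits $A = A_\psi \cup E$, where $A_\psi$ consists of $x$ with $x\md{p} \in T_p$ for $97\%$ of $p \sim X^{1/4}$; the set $E$ is bounded by $O(X^{1/4})$ via the larger sieve with $\sigma_p = 2\eps$, and for $x \in A_\psi$ one observes that $4adx+\Delta d^2$ is a square modulo $95\%$ of the primes in some dyadic block near $X^{1/4}$. The decisive arithmetic input is Lemma \ref{pseudosquares}: by the multiplicative large sieve applied to the real characters $\chi_{4q}$, together with the prime number theorem for real characters to exclude small $q>1$, such an integer must equal $n_i s^2$ with $n_i$ one of $O(\log^{10}X)$ squarefree values, either $n_1 = 1$ (so $x \in \psi(\Q)$) or $n_i \geq X^{\varrho}$ (at most $HX^{1/2-\varrho/4}$ such $x$, after summing over $i$). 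Your sketch contains no substitute for this quasisquare step, and without it the claimed bound $|A\setminus\psi_A(\Q)| \leq HX^{1/2-c}$ is unsupported.

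Two further steps in your outline would not survive scrutiny even as auxiliary devices. First, ``two distinct quadratics modulo $p$ share $O(1)$ values'' is wrong: the value sets each have about $p/2$ elements and generically intersect in about $p/4$ residues (what is true, and what the paper exploits in Lemma \ref{lem6.2} for the Ostmann application, concerns solution counts of the equation $\psi_A(x)+\psi_B(y) \equiv 0$, not intersections of value sets). Second, the hoped-for converse ``near-minimal additive energy of $S_p$ forces a quadratic-residue-type pattern'' does not exist: a random subset of $\Z/p\Z$ of size $\frac12(p+1)$ also has $(\frac1{16}+o(1))p^3$ additive quadruples, so extremality carries no structural information, and indeed the paper's proof of Theorem \ref{stability-thm} makes no use of Theorem \ref{thm1.4} or Theorem \ref{prog-thm}. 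Likewise your partition of the primes into $\mathscr{P}_A$ and $\mathscr{P}_B$ according to which of $|A\md{p}|, |B\md{p}|$ is at most $\frac12(p+1)$ only yields half-density information for one set, whereas the later stages of the argument need both sets to occupy at most $(\frac12+\eps)p$ classes for almost all $p \sim X^{1/4}$, which is precisely what Lemma \ref{large-sieve-app} delivers.
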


We expect that if the large sieve bound is close to sharp for $A$ and $B$, then there must exist rational quadratics of ``small'' height containing ``many'' points of $A$ and $B$. Together with Theorem \ref{stability-thm}, this provides some motivation for making the following conjecture of the form ``almost equality in the large sieve implies quadratic structure''.

\begin{conjecture}\label{stability-conj}
Let $X_0 \in \N$, and let $\rho > 0$. Let $X \in \N$ be sufficiently large in terms of $X_0$ and $\rho$. Suppose that $A,B \subset [X]$ and that $|A \md{p}| + |B \md{p}| \leq p+1$ for all $p \in [X_0, X^{1/4}]$. Then there exists a constant $c = c(\rho) > 0$ such that one of the following holds:
\begin{enumerate}
\item \textup{(Better than large sieve)} Either $|A \cap [X^{1/2}]|$ or $|B \cap [X^{1/2}]|$ is $\leq X^{1/4 - c}$;
\item \textup{(Quadratic structure)} There are two rational quadratics $\psi_A,\psi_B$ of height at most $X^{\rho}$ such that $|A \setminus \psi_{A}(\Q)|$ and $|B\setminus \psi_{B}(\Q)| \leq X^{1/2 -c}$.
\end{enumerate}
\end{conjecture}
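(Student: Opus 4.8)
The plan is to deduce Conjecture~\ref{stability-conj} from Theorem~\ref{stability-thm} by supplying the one ingredient that is currently beyond us: a proof that near-sharpness of the large sieve for a set already forces \emph{some} rational quadratic of polynomially bounded height to contain a moderately large number of its points. Theorem~\ref{stability-thm} should then do the rest, promoting ``moderately many points'' to ``all but a few points''.

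Concretely, fix $\rho > 0$. If $\rho$ exceeds some absolute threshold the statement follows by monotonicity from a smaller value (enlarging $\rho$ only weakens the conclusion), so we may assume $\rho$ is small; in particular $\rho \leq 1/8$, so we may apply Theorem~\ref{stability-thm} with $H = X^{\rho}$ and with $\eps$ a suitably small function of $\rho$. If that theorem returns its alternative~(i) we are immediately in alternative~(i) of the conjecture. Otherwise we are in its alternative~(ii): for \emph{every} pair $\psi_A, \psi_B$ of rational quadratics of height at most $X^{\rho}$, either both $|A \setminus \psi_A(\Q)|$ and $|B \setminus \psi_B(\Q)|$ are at most $X^{\rho} X^{1/2 - c_0}$, or one of $|A \cap \psi_A(\Q)|, |B \cap \psi_B(\Q)|$ is at most $X^{\rho} X^{1/4 + \eps}$, where $c_0$ is the absolute constant of Theorem~\ref{stability-thm}.

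The heart of the argument is the \textbf{existence step}: assuming alternative~(i) of the conjecture fails, exhibit a single pair $\psi_A, \psi_B$ of height at most $X^{\rho}$ with $|A \cap \psi_A(\Q)| > X^{\rho}X^{1/4+\eps}$ and $|B \cap \psi_B(\Q)| > X^{\rho}X^{1/4+\eps}$. Granting this, the second option in Theorem~\ref{stability-thm}(ii) is excluded for that pair, so the first option must hold; hence $|A \setminus \psi_A(\Q)|$ and $|B \setminus \psi_B(\Q)|$ are at most $X^{\rho}X^{1/2-c_0} \leq X^{1/2-c}$ for a suitable $c = c(\rho) > 0$ (chosen small enough also to absorb the bound from alternative~(i)), which is exactly alternative~(ii) of the conjecture. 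To locate $\psi_A$, one works with $A[X^{1/2}]$, which by the failure of~(i) still has more than $X^{1/4 - c}$ elements although $A$ occupies only about half the residue classes modulo each prime $p \in [X_0, X^{1/4}]$, in the joint sense forced by~$B$. The idea is to show, by an averaging or energy-increment argument ranging over the polynomially many quadratics of height at most $X^{\rho}$ and using the structural constraints on the sets $A \md{p}$ provided by Theorems~\ref{thm1.4} and~\ref{prog-thm} (which steer these sets away from additive or progression-like behaviour, hence towards genuine quadratic-residue patterns), that one such quadratic accounts for a positive proportion of $A[X^{1/2}]$ --- in particular for more than $X^{\rho}X^{1/4+\eps}$ points of $A$. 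The same argument is run for~$B$.

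This existence step is also the \textbf{main obstacle}, and is precisely why the statement is only a conjecture: manufacturing an honest rational quadratic of controlled height from the bare hypotheses ``$|\A \md{p}|$ is roughly $\frac{1}{2}p$ for all small $p$'' and ``$|\A[X]|$ is of order $X^{1/2}$'' is, in essence, the inverse large sieve problem itself. Everything else --- the reduction to Theorem~\ref{stability-thm}, the selection and bookkeeping of $\eps$, $c$ and $\rho$, and the passage from ``a positive proportion'' to ``more than $X^{\rho}X^{1/4+\eps}$ points'' --- is either already available in this paper or routine.
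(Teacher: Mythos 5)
You have not proved the statement, and neither does the paper: Conjecture \ref{stability-conj} is stated as an open conjecture, with Theorem \ref{stability-thm} offered only as motivation, so there is no ``paper proof'' for your argument to be measured against. Your proposal correctly reproduces the easy surrounding reduction (restrict to small $\rho$, apply Theorem \ref{stability-thm} with $H = X^{\rho}$ and $\eps = \eps(\rho)$, note that if one can exhibit a single pair $\psi_A, \psi_B$ of height at most $X^{\rho}$ with $|A \cap \psi_A(\Q)|, |B \cap \psi_B(\Q)| > X^{\rho}X^{1/4+\eps}$ then alternative (ii) of the theorem forces $|A \setminus \psi_A(\Q)|, |B \setminus \psi_B(\Q)| \leq X^{\rho}X^{1/2-c_0} \leq X^{1/2-c}$ once $\rho < c_0$). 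That bookkeeping is sound, and it is essentially the observation the authors themselves make when motivating the conjecture.

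The genuine gap is the step you call the ``existence step'', and it is not a technical lacuna but the entire content of the conjecture: producing, from the mere failure of alternative (i), even one rational quadratic of height $X^{\rho}$ capturing $\gg X^{1/4+\eps}$ points of $A$ is the inverse large sieve problem itself. The route you sketch for it does not work as described. Theorems \ref{thm1.4} and \ref{prog-thm} improve on the large sieve only under the hypothesis that the occupied sets $S_p$ have additive structure (many additive quadruples, or lie in arithmetic progressions); their inapplicability to a given $A$ tells you nothing positive about quadratic-residue-type structure of $A \md{p}$, and no ``averaging or energy-increment argument over the polynomially many quadratics of height at most $X^{\rho}$'' is given or available in the paper. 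Moreover, even if one knew that $A \md{p}$ resembled the value set of a quadratic modulo each prime $p$ separately, one would still face the local-to-global problem of welding these mod-$p$ descriptions into a single rational quadratic of polynomially bounded height containing a positive proportion of $A$; nothing in Sections 3--7 or the appendix supplies such a mechanism. So the proposal should be regarded as a (correct) reduction of the conjecture to its essential open core, not as a proof.
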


The contents of Theorem \ref{stability-thm} and of Conjecture \ref{stability-conj} are perhaps a little hard to understand on account of the parameters $H, X, \rho$ and $\eps$. As a corollary we establish the following more elegant statement involving infinite sets. 

\begin{theorem}\label{stability-thm-symmetric}
Suppose that $\A$ is a set of positive integers and that $|\A \md{p}| \leq \frac{1}{2}(p+1)$ for all sufficiently large primes $p$. Then one of the following options holds:
\begin{enumerate}
\item \textup{(Quadratic structure)} There is a rational quadratic $\psi$ such that all except finitely many elements of $\A$ are contained in $\psi(\Q)$;
\item \textup{(Better than large sieve)} For each integer $k$ there are arbitrarily large values of $X$ such that $|\A[X]| < \frac{X^{1/2}}{\log^k X}$;
\item \textup{(Far from quadratic structure)} Given any rational quadratic $\psi$, for all $X$ we have $|\A[X] \cap \psi(\Q)| \leq X^{1/4 + o_{\psi}(1)}$.
\end{enumerate}
\end{theorem}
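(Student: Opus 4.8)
The plan is to deduce this from Theorem \ref{stability-thm} by taking $A = B = \A[X]$ for a well-chosen sequence of scales $X$, letting $X_0 \to \infty$ slowly and $\eps \to 0$ slowly, and carefully aggregating the finitary conclusions into the three stated asymptotic alternatives. First I would record the trivial reduction: since $|\A \md{p}| \le \frac12(p+1)$ for all $p \ge p_0$, setting $A = B = \A[X] \cap [p_0, \infty)$ (discarding the finitely many elements below $p_0$, which affects none of the three conclusions) gives $|A \md p| + |B \md p| \le p+1$ for all $p \in [X_0, X^{1/4}]$ whenever $X_0 \ge p_0$. So Theorem \ref{stability-thm} applies to this pair at every large scale $X$, with $H = X^{1/8}$ and any fixed small $\eps$.

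Next I would run the following dichotomy. Fix $\eps > 0$ small. If alternative (i) of Theorem \ref{stability-thm} holds for infinitely many $X$, then (since $A = B$) we get $|\A[X^{1/2}]| \le X^{1/4 - c\eps^3}$ for arbitrarily large $X$, i.e. $|\A[Y]| \le Y^{1/2 - 2c\eps^3}$ for arbitrarily large $Y$; this is far stronger than conclusion (2) of the theorem (it beats $Y^{1/2}/\log^k Y$ for every $k$), so in this case we are done. Otherwise, for all sufficiently large $X$ (depending on $\eps$) alternative (ii) of Theorem \ref{stability-thm} holds: for \emph{every} rational quadratic $\psi$ of height at most $X^{1/8}$, either $|\A[X] \setminus \psi(\Q)| \le X^{1/8} \cdot X^{1/2-c} = X^{5/8 - c}$, or $|\A[X] \cap \psi(\Q)| \le X^{1/8}\cdot X^{1/4+\eps} = X^{3/8 + \eps}$. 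Here I would want to note that $X^{5/8-c}$ is, for $c$ a fixed absolute constant, much larger than $X^{1/2}$, so the first branch is vacuous as a bound on $|\A[X] \setminus \psi(\Q)|$ unless $\A[X]$ itself is small; the content is really the \emph{contrapositive}: if some quadratic $\psi$ of height $\le X^{1/8}$ captures more than $X^{3/8+\eps}$ points of $\A[X]$, then in fact $|\A[X] \setminus \psi(\Q)| \le X^{5/8-c}$.

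The heart of the argument is then a compactness/stabilization step to pass from "at each scale $X$ there is a good quadratic $\psi_X$" to "one fixed quadratic works cofinitely", and this is the step I expect to be the main obstacle. Suppose conclusion (3) fails: then there is a rational quadratic $\psi^\ast$ and arbitrarily large $X$ with $|\A[X] \cap \psi^\ast(\Q)| > X^{1/4 + \eta}$ for some fixed $\eta > 0$. For $X$ large, $\psi^\ast$ has height $\le X^{1/8}$, so the dichotomy of the previous paragraph applies with this $\psi$; since $X^{1/4+\eta} > X^{3/8+\eps}$ fails for small $\eps$... — so here I would instead choose $\eps = \eps(\eta) < \eta/2$ first, so that $X^{1/4+\eta}$ genuinely exceeds the $X^{3/8+\eps}$ threshold only if $\eta > 1/8$; to handle small $\eta$ I would apply Theorem \ref{stability-thm} with a smaller exponent in place of $X^{1/4}$, or iterate by passing to the subset $\A \cap \psi^\ast(\Q)$ and reparametrising $\psi^\ast(\Q)$ as a copy of (a quadratic progression in) $\Z$ — this is the delicate bookkeeping. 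Once the threshold is crossed we land in the branch $|\A[X] \setminus \psi^\ast(\Q)| \le X^{5/8-c}$; combined with the large sieve bound $|\A[X]| \ll X^{1/2}$ this forces almost all of $\A[X]$ into $\psi^\ast(\Q)$. Letting $X \to \infty$ along this sequence and using that $\psi^\ast$ is fixed, I would conclude that all but $O(X^{5/8-c})$ elements of $\A$ up to $X$ lie on $\psi^\ast$, for a cofinal set of $X$; a monotonicity argument (the exceptional set is non-decreasing in $X$) upgrades this to "all but finitely many elements of $\A$ lie in $\psi^\ast(\Q)$", which is conclusion (1). Finally I would let $\eps \to 0$ and $X_0 \to \infty$ through a diagonal sequence to remove the dependence on these parameters, checking that the three alternatives are stable under this limit (alternative (2) is a "for arbitrarily large $X$" statement and alternative (3) an "for all $X$, with $o_\psi(1)$" statement, both of which survive the diagonalisation). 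The main work, and the place where I expect to spend most effort, is making the iteration in the small-$\eta$ case rigorous: reparametrising $\A \cap \psi^\ast(\Q)$ as a subset of the integers satisfying a large-sieve hypothesis (with the occupied residue classes inherited from those of $\A$, pulled back through the quadratic), so that Theorem \ref{stability-thm} can be reapplied and the exponent $1/4 + o_\psi(1)$ extracted.
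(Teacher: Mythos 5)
There is a genuine gap, and it sits exactly where you expected to have to work, but the missing idea is not the one you anticipate. Two points. First, a smaller issue: you apply Theorem \ref{stability-thm} with $H=X^{1/8}$, which makes both conclusions in its option (ii) vacuous ($HX^{1/2-c}=X^{5/8-c}$ exceeds the trivial large sieve bound $X^{1/2}$, and the threshold becomes $X^{3/8+\eps}$, which the failure of (iii) need not reach). The spurious ``small $\eta$'' difficulty, and the proposed reparametrisation/iteration you call the delicate bookkeeping, dissolve once you notice that $H$ is a free parameter: since the quadratic $\psi^\ast$ witnessing the failure of (iii) is \emph{fixed}, you may take $H$ to be its height, a constant, so that the dichotomy reads $|\A[X]\setminus\psi^\ast(\Q)|\ll_{\psi^\ast}X^{1/2-c}$ versus $|\A[X]\cap\psi^\ast(\Q)|\ll_{\psi^\ast}X^{1/4+\eps}$, and any $\eps<\eta$ crosses the threshold. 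This is how the paper applies the theorem.

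The serious gap is the last step. From ``$|\A[X]\setminus\psi^\ast(\Q)|\ll X^{1/2-c}$ for a cofinal set of $X$'' you cannot conclude that $\A\setminus\psi^\ast(\Q)$ is finite: the admissible number of exceptions grows like $X^{1/2-c}$, so the monotonicity of the exceptional set gives nothing (a set with, say, $\asymp X^{1/3}$ elements below $X$ off the quadratic is perfectly compatible with all these bounds). Conclusion (i) of the theorem is a \emph{finiteness} statement, and no purely counting argument at single scales can produce it. The paper closes this gap by using the hypothesis $|\A \md{p}|\leq\frac12(p+1)$ a second time, beyond its use inside Theorem \ref{stability-thm}: assuming option (ii) of the theorem also fails, if there were $k+1$ elements $x_1,\dots,x_{k+1}$ of $\A$ outside $\psi^\ast(\Q)$, then for every large prime $p$ with $(4ad\,x_i+\Delta d^2\mid p)=-1$ these elements occupy residue classes of $\A \md{p}$ disjoint from $(\psi^\ast(\Q)\cap\Z)\md{p}$, so the part of $\A$ lying on the quadratic must miss $k(p)$ extra classes, where $k(p)=\frac12(k+1)-\frac12\sum_i\chi_i(p)$ for real characters $\chi_i$. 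Pulling back through an integral model $\tilde\psi$ of the quadratic (Lemma \ref{tedious-local-lemma}) and applying the multiplicative weighted sieve bound (Lemma \ref{small-from-large}) with $w(p)=k-\sum_i\chi_i(p)$, whose average is $k$ by the prime number theorem for characters, yields $|\A[X]\cap\psi^\ast(\Q)|\ll X^{1/2}\log^{-k}X$ for all $X$, which together with $|\A[X]\setminus\psi^\ast(\Q)|\ll X^{1/2-c}$ forces option (ii) after all. Hence either (ii) holds or $|\A\setminus\psi^\ast(\Q)|\leq k+1$, giving (i). Some argument of this kind, converting each genuine exceptional element into a logarithmic saving in the sieve bound for $\A\cap\psi^\ast(\Q)$, is indispensable; your proposal as written does not contain it.
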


We conjecture that option (iii) is redundant. This is another conjecture of inverse large sieve type, rather cleaner than Conjecture \ref{stability-conj}.

\begin{conjecture}\label{stability-conj-symmetric}
Suppose that $\A$ is a set of positive integers and that $|\A \md{p}| \leq \frac{1}{2}(p+1)$ for all sufficiently large primes $p$. Then one of the following options holds:
\begin{enumerate}
\item \textup{(Quadratic structure)} There is a rational quadratic $\psi$ such that all except finitely many elements of $\A$ are contained in $\psi(\Q)$;
\item \textup{(Better than large sieve)} For each integer $k$ there are arbitrarily large values of $X$ such that $|\A[X]| < \frac{X^{1/2}}{\log^k X}$. In particular, $\lim\inf_{X \rightarrow \infty} X^{-1/2} |\A[X]| = 0$.
\end{enumerate}
\end{conjecture}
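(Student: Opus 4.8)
The plan is to derive this from Conjecture~\ref{stability-conj} together with Theorem~\ref{stability-thm-symmetric}. The two options of the present conjecture are, verbatim, the first two options of Theorem~\ref{stability-thm-symmetric}, so in view of that trichotomy it suffices to rule out the remaining possibility whenever the large sieve is not beaten; that is, I shall assume that option~(ii) fails --- so that there is an integer $k$ with $|\A[X]| \geq X^{1/2}/\log^k X$ for all large $X$ --- assume option~(iii) of Theorem~\ref{stability-thm-symmetric}, and derive a contradiction.

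\textbf{Capturing $\A[X]$ by a quadratic at each scale.} Fix a small constant $\rho > 0$, and let $X_0$ be large enough that $|\A \md{p}| \leq \tfrac12(p+1)$ for all primes $p \geq X_0$. For each large $X$ apply Conjecture~\ref{stability-conj} with $A = B = \A[X]$; this is permissible since $\A[X] \subseteq \A$ gives $|A\md{p}| + |B\md{p}| = 2|\A[X]\md{p}| \leq p+1$ for $p \in [X_0, X^{1/4}]$. Option~(i) of that conjecture is excluded, because $|\A[X^{1/2}]| \gg X^{1/4}/\log^k X$ exceeds $X^{1/4-c}$ for large $X$ (the constant $c = c(\rho) > 0$ being fixed). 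Hence option~(ii) produces a rational quadratic $\psi_X$ of height at most $X^{\rho}$ with $|\A[X] \setminus \psi_X(\Q)| \leq X^{1/2-c}$, whence $|\A[X] \cap \psi_X(\Q)| \geq |\A[X]| - X^{1/2-c} \gg X^{1/2}/\log^k X$.

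\textbf{Stabilising the quadratics and concluding.} Comparing scales $X$ and $2X$, both $\psi_X$ and $\psi_{2X}$ capture all but $O(X^{1/2-c})$ elements of $\A[X]$, so $\psi_X(\Q)$ and $\psi_{2X}(\Q)$ have $\gg X^{1/2}/\log^k X$ integers of $[X]$ in common. On the other hand, two rational quadratics whose value sets are distinct can share only $X^{o(1)}$ integers up to $X$, the implied constant depending polynomially on the heights; indeed this is the assertion that the affine conic $\psi(s) = \psi'(t)$ either has all of its integer points on a single line (which forces the value sets to coincide) or has integer points as sparse as Pell solutions. Since the heights here are at most $(2X)^{\rho}$, the intersection bound reads $X^{C\rho + o(1)}$, negligible against $X^{1/2}/\log^k X$ provided $\rho$ is small; hence $\psi_X(\Q) = \psi_{2X}(\Q)$, and chaining over dyadic scales (and within each block) shows that $\psi_X(\Q) = Q^*$ for a single fixed set $Q^*$ and all $X \geq X_1$. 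Choosing any rational quadratic $\psi$ with $\psi(\Q) = Q^*$, we obtain $|\A[X] \cap \psi(\Q)| \gg X^{1/2}/\log^k X$ for all large $X$ with $\psi$ now \emph{fixed}, contradicting option~(iii) of Theorem~\ref{stability-thm-symmetric}, which bounds this quantity by $X^{1/4 + o_\psi(1)}$. Thus option~(iii) cannot hold alongside the failure of option~(ii), which is exactly the conjecture.

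\textbf{The main obstacle} is the quadratic-intersection estimate invoked above: that two rational quadratics with distinct value sets meet in few integers below $X$, with a bound that is sub-$X^{1/2}$ in $X$ and only polynomial in the heights. In spirit this is classical --- the common values $\leq X$ correspond to bounded-height rational points on a curve of genus $0$ or $1$, sparse unless the curve degenerates into a line --- but making the height dependence fully explicit, and then confirming that a single small $\rho$ makes $X^{C\rho}$ negligible while Conjecture~\ref{stability-conj} still furnishes a positive $c(\rho)$, is where care is needed; the remaining steps are bookkeeping. Were the intersection allowed to be as large as $X^{1/4}$ times a super-polynomial function of the heights, the stabilisation step would break down and one would instead be forced to prove a height-uniform form of option~(iii) of Theorem~\ref{stability-thm-symmetric} directly.
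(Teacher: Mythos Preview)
The statement you are attempting is labelled a \emph{conjecture} in the paper, and the paper offers no proof of it. The authors merely remark that it is ``rather cleaner than Conjecture~\ref{stability-conj}''; they do not claim (and do not prove) that one implies the other. So there is no paper proof against which to compare your argument.

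What you have written is therefore not a proof of the conjecture but a proposed \emph{reduction}: Conjecture~\ref{stability-conj} $\Rightarrow$ Conjecture~\ref{stability-conj-symmetric}. The logical skeleton is sound --- invoking Theorem~\ref{stability-thm-symmetric} to reduce to ruling out option~(iii) under the failure of option~(ii), then using Conjecture~\ref{stability-conj} at every scale to produce quadratics $\psi_X$ of height $\le X^{\rho}$ that capture $\A[X]$ up to $X^{1/2-c}$, and finally stabilising these into a single $\psi$ to contradict option~(iii). The bookkeeping (restricting to $A=B=\A[X]$, ruling out option~(i) of Conjecture~\ref{stability-conj} via $|\A[X^{1/2}]|\ge X^{1/4}/\log^{k}X$, chaining over dyadic and sub-dyadic scales) is fine.

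The genuine gap is exactly the one you flag: the claim that two rational quadratics of height $\le X^{\rho}$ with $\psi(\Q)\neq\psi'(\Q)$ share at most $X^{C\rho+o(1)}$ integers in $[X]$. This is the heart of the reduction and you have not proved it. Passing via Lemma~\ref{tedious-local-lemma} to $\tilde\psi,\tilde\psi'$ with integer arguments and completing the square, the shared values correspond to integer points on a conic $d'u^{2}-dv^{2}=N$ with $|d|,|d'|,|N|\le X^{O(\rho)}$ and $|u|,|v|\ll X^{\rho+1/2}$. If $dd'$ is a rational square and $N=0$ one gets a linear relation and indeed $\psi(\Q)=\psi'(\Q)$; in every other case the count of integer points is governed by a Pell-type analysis, and the bound really does come out as $X^{O(\rho)+o(1)}$ (divisor bounds when the form splits, and $h(D)\cdot(1+\log T/\log\eta_D)$ with $h(D)\log\eta_D\ll\sqrt{D}\log D$ in the indefinite non-split case). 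So the step is true, but it is a lemma that must be stated and proved with the explicit height dependence, not a remark. Until that is supplied, the argument is incomplete even as a conditional reduction --- and of course it remains conditional on Conjecture~\ref{stability-conj} regardless.
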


We remark that some very simple properties of rational quadratics are laid down in Appendix \ref{rat-quad-app}. In particular we draw attention to the fact that given a rational quadratic $\psi$ there are further rational quadratics $\psi_1,\psi_2$ such that $\psi_1(\Z) \subset \psi(\Q) \cap \Z \subset \psi_2(\Z)$. In particular, $|\psi(\Q) \cap [X]| \ll_{\psi} X^{1/2}$. 

\vspace{11pt}
Our final task in the paper is to show, elaborating on ideas of Elsholtz \cite{elsholtz}, that Conjecture \ref{stability-conj} would resolve the currently unsolved ``inverse Goldbach problem'' of Ostmann \cite[p. 13]{ostmann} (and see also \cite[p. 62]{erdos}). This asks whether the set of primes can be written as a sumset $\A + \B$ with $|\A|, |\B| \geq 2$, except for finitely many mistakes. Evidently the answer should be that it cannot be so written.

\begin{theorem}\label{ils-ostmann}
Assume Conjecture \ref{stability-conj}. Let $\A, \B$ be two sets of positive integers, with $|\A|, |\B| \geq 2$, such that $\A + \B$ contains all sufficiently large primes. Then $\A + \B$ also contains infinitely many composite numbers. 
\end{theorem}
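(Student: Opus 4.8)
The plan is to argue by contradiction. Suppose $\A + \B$ contains all sufficiently large primes but only finitely many composite numbers. The first step is to pass to a convenient finite‐scale formulation so that Conjecture \ref{stability-conj} can be invoked: fix a large parameter $X$ and set $A = \A \cap [X]$, $B = \B \cap [X/2]$ (say), rescaling so that the sumset $A+B$ covers essentially all primes up to $X$. Since $A+B$ misses almost all composites but captures almost all primes up to $X$, we have the crucial counting inequality $|A||B| \geq |A+B| \gg \pi(X) \gg X/\log X$, while on the other hand $A + B$ meeting almost no composite number forces $A$ and $B$ to be sieved sets: for every prime $p \leq X^{1/4}$, the residue classes occupied by $A$ and by $B$ modulo $p$ must be ``complementary'' in the sense that $A + B$ avoids the class $0 \md p$ (which contains composites $\leq X$), so $|A \md p| + |B \md p| \leq p + 1$. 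This is exactly the hypothesis of Conjecture \ref{stability-conj} (and of Theorem \ref{stability-thm}), with $A_0$-type lower cutoff a fixed constant $X_0$.

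The second step is to run the dichotomy of Conjecture \ref{stability-conj}. Option (i), ``better than large sieve'', says one of $|A \cap [X^{1/2}]|$, $|B \cap [X^{1/2}]|$ is $\leq X^{1/4 - c}$; I would show this contradicts the counting lower bound $|A||B| \gg X/\log X$ once one also knows (from the large sieve, applied to the other set) that both $|A[X]|, |B[X]| \ll X^{1/2}$ — more precisely one needs a version of the counting inequality localized to $[X^{1/2}]$, which follows by the standard trick of looking at $A + B$ in a slightly smaller range, or by the product-set structure of $\A + \B$. So option (i) is excluded, and we land in option (ii): there are rational quadratics $\psi_A, \psi_B$ of height $\leq X^\rho$ with $|A \setminus \psi_A(\Q)|, |B \setminus \psi_B(\Q)| \leq X^{1/2 - c}$. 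Choosing $\rho$ small, this says $\A$ and $\B$ almost entirely lie on two quadratic curves.

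The third step is the arithmetic heart, following Elsholtz \cite{elsholtz}: a Goldbach‐type identity $\A + \B = \P$ (up to finite error) with $\A \subset \psi_A(\Q)$ and $\B \subset \psi_B(\Q)$ (up to $X^{1/2-c}$ exceptions, which we can discard because they are too sparse to matter after another counting step) is essentially impossible, because the sumset of (the integer points on) two quadratics is itself a highly structured set — it is contained in the image of a quadratic in two variables, i.e. a set whose intersection with $[X]$ has size $\asymp X$ only if the leading forms are compatible, and in that case the sumset contains an arithmetic progression, hence composites. Concretely one shows that if $\psi_A(m) + \psi_B(n)$ hits a positive proportion of integers up to $X$, then for fixed $n$ it runs over a quadratic progression in $m$ that must contain a composite value (a quadratic polynomial with integer values takes infinitely many composite values unless it is essentially linear, and even a linear polynomial does), contradicting that $\A + \B$ omits all but finitely many composites. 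Some care is needed to upgrade ``almost all of $\A$, $\B$'' to ``enough of $\A + \B$'': one uses that $|A \setminus \psi_A(\Q)| \le X^{1/2-c}$ combined with $|B| \le X^{1/2}$ contributes at most $X^{1-c}$ to $|A+B|$, which is negligible against $\pi(X)$, so the bulk of the prime sumset genuinely comes from $\psi_A(\Q) + \psi_B(\Q)$.

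The main obstacle I anticipate is the last step — ruling out that a positive‐density subset of $\{1, \dots, X\}$ consisting (mostly) of primes can be represented as $\psi_A(\Q) + \psi_B(\Q)$. The delicate point is that $\psi_A(\Q) \cap \Z$ and $\psi_B(\Q) \cap \Z$ are not literally quadratic progressions but unions of boundedly many shifted scaled ones (this is exactly the remark about $\psi_1(\Z) \subset \psi(\Q) \cap \Z \subset \psi_2(\Z)$), so one must handle the sumset of two such unions, and ensure the ``composite‐producing'' mechanism survives. I would isolate a single pair of quadratic progressions $P_A \subset \A$, $P_B \subset \B$ each of size $\gg X^{1/2}$ (possible by pigeonhole since there are only $O(1)$ pieces), and then show $P_A + P_B$ — a set parametrized by $(m,n) \mapsto \alpha m^2 + \beta n^2 + \cdots$ — necessarily contains composite values in $[X]$: either by elementary arguments (plug in values making the expression divisible by a fixed small prime while exceeding it) or by invoking that a genuine binary quadratic form, or an indefinite/definite integral polynomial of degree $2$ in each variable, represents a composite. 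This step is where Elsholtz's ideas do the real work, and the role of Conjecture \ref{stability-conj} is precisely to reduce Ostmann's problem to this concrete Diophantine assertion.
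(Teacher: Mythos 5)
Your reduction runs parallel to the paper's up to the point where $\A$ and $\B$ are (essentially) contained in $\psi_A(\Q)$ and $\psi_B(\Q)$: truncate, verify $|A \md{p}| + |B \md{p}| \leq p+1$, rule out option (i) of Conjecture \ref{stability-conj} by counting together with Elsholtz-type upper bounds, and land in option (ii). (One technical slip there: cutting $A,B$ only above a fixed constant $X_0$ is not enough, because a representation $a+b=p$ with $p \leq X^{1/4}$ lets $A \md{p}$ and $(-B) \md{p}$ share several classes; the paper takes $A = \A \cap (X^{1/4},X]$, $B = \B \cap (X^{1/4},X]$ precisely so that every multiple of $p$ in $A+B$ is a proper multiple, hence composite, hence excluded.) The genuine gap is in your final step. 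Option (ii) says $\A[X]$ and $\B[X]$ are almost entirely \emph{contained in} the value sets of two quadratics; it does not provide quadratic progressions $P_A \subset \A$, $P_B \subset \B$. After your pigeonhole you only have large \emph{subsets} of quadratic progressions (of relative density roughly $1/\log^5 X$ inside value sets of size $\asymp X^{1/2}$), and none of your composite-producing devices applies to arbitrary subsets: you cannot ``fix $n$ and let $m$ run'', nor ``plug in values making $\psi_A(m)+\psi_B(n)$ divisible by a fixed small prime'', since those parameter values need not correspond to elements of $A$ or $B$. Moreover, for any single small prime $q$ the only information the hypothesis gives is that $A+B$ omits the nontrivial multiples of $q$, and that alone is consistent with $|A|,|B| \approx X^{1/2}$; no contradiction can come from one modulus.

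The assertion you defer to ``Elsholtz's ideas'' --- that two subsets of quadratic value sets, each of size $X^{1/2-o(1)}$, cannot have a composite-free sumset --- is exactly the remaining hard content, and the paper proves it as Lemma \ref{lem6.2} by a sieve argument that aggregates congruence information over many primes: parametrize $A$, $B$ by integer sets $S_A, S_B$ of size $O(HX^{1/2})$; use Linnik's theorem to find $\gg X^{1/4-o(1)}H^{-2}$ primes $p \leq Y$ for which the conic $\psi_A(u)+\psi_B(v) \equiv 0 \md{p}$ splits as a product of two linear factors; observe that the sumset avoiding $0 \md{p}$ then forces one of $S_A \md{p}$, $S_B \md{p}$ to have size at most $7p/8$; and conclude via the large sieve (Proposition \ref{classicls}) that one of $|A|, |B| \ll X^{1/3}$. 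This contradicts Elsholtz's lower bound $|\A[X]|, |\B[X]| \gg X^{1/2}\log^{-5}X$, which holds for any additive decomposition of the primes and which your argument implicitly needs anyway. Without this step, or some substitute that likewise exploits the avoidance of $0 \md{p}$ for many primes simultaneously, your sketch does not close.
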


We remark that much stronger statements that would imply this should be true, but we do not know how to prove them. For example, it is reasonable to make the following conjecture.

\begin{conjecture}
Let $\delta > 0$. Then if $X$ is sufficiently large in terms of $\delta$, the following is true. Let $A, B \subset [X]$ be any sets with $|A|, |B| \geq X^{\delta}$. Then $A + B$ contains a composite number.
\end{conjecture}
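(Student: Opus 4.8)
By contraposition, it suffices to show: if $A, B \subseteq [X]$ and every element of $A + B$ is prime, then $\min(|A|, |B|) < X^{\delta}$ once $X$ is large in terms of $\delta$. My plan is to extract mod-$p$ information from the primality of $A + B$, apply the large sieve, and then bring in an inverse-large-sieve input to get below the square-root barrier.

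I would first dispose of two easy cases. If $B$ meets all residue classes modulo some prime $\ell$ (necessarily $\ell \leq |B|$), then for each $a \in A$ with $a > \ell$ some $a + b$ with $b \in B$ is a multiple of $\ell$ exceeding $\ell$, hence composite; so $|A| \leq \ell = O_{\delta}(1)$, contradicting $|A| \geq X^{\delta}$. Hence we may assume $A$ and $B$ are both admissible, so $|A \md{p}|, |B \md{p}| \leq p - 1$ for every prime $p$. At the opposite extreme, if $|A|$ and $|B|$ are both within a logarithmic factor of the maximal possible size $\asymp X/\log X$, then $|A + B| \geq |A| + |B| - 1$ is pushed up against $\pi(2X)$, and the structure theory of sumsets in $\Z$ forces $A$ and $B$ to be arithmetic progressions of a common difference $d \ll \log X$, so $A + B$ would be a large set of primes trapped in a single residue class modulo $d$ --- impossible. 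So the substantive range is when $\min(|A|,|B|)$ has intermediate polynomial size.

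Next I would read off the mod-$p$ structure. Put $T = \tfrac{1}{2}\min(|A|,|B|) \geq \tfrac12 X^{\delta}$ and pass to $A_1 = A \cap (T, X]$, $B_1 = B \cap (T, X]$, which still have size $\geq \tfrac12 X^{\delta}$. For a prime $p \leq T$: if $a \in A_1$, $b \in B_1$ and $p \mid a + b$, then $a + b$ is a multiple of $p$ exceeding $p$ (since $a > T \geq p$), hence composite, contradicting $a + b \in A + B$. Therefore $0 \notin (A_1 \md{p}) + (B_1 \md{p})$, and Cauchy--Davenport gives $|A_1 \md{p}| + |B_1 \md{p}| \leq p$ for all primes $p \leq T$; in particular at least one of the two sets occupies at most $\tfrac12(p+1)$ classes mod $p$. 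Pigeonholing over the primes $p \leq Q := \min(T, X^{1/2})$, one of them --- say $A_1$ --- does so for at least half of them, and the large sieve then gives $|A_1| \ll X^{1/2}\log X$, hence $\min(|A|, |B|) \ll X^{1/2 + o(1)}$ unconditionally.

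This already proves the conjecture for $\delta > 1/2$, and the real difficulty is to push the exponent from $1/2$ down to $\delta$. I can see two natural routes, both presently out of reach. (a) Since every element of $A + B$ is prime, $A \subseteq \bigcap_{b \in B}(\P - b)$; granted the Hardy--Littlewood prime $k$-tuples conjecture uniformly in $k \leq |B|$, the admissible case would give $|A| \ll X\,\mathfrak{S}(B)/(\log X)^{|B|} < 1$ as soon as $|B| \gg \log X/\log\log X$, which holds since $|B| \geq X^{\delta}$ --- so $A$ would be empty. (b) Alternatively, one would feed $|A_1 \md{p}| + |B_1 \md{p}| \leq p$ into Conjecture \ref{stability-conj}, exactly as in the proof of Theorem \ref{ils-ostmann}: unless one of $A_1, B_1$ is already small, there would be rational quadratics $\psi_A, \psi_B$ of small height containing $A_1$ and $B_1$ outside $O(X^{1/2 - c})$ exceptions, and then $\psi_A(\Q) + \psi_B(\Q)$ is essentially the value set of a binary quadratic polynomial, which by genus theory omits a positive proportion of the primes, contradicting $A + B \subseteq \P$. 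The obstacle with route (b) is quantitative: Conjecture \ref{stability-conj} controls the non-quadratic part of $A_1$ only to within $X^{1/2 - c}$, while the quadratic part is itself of size $\asymp X^{1/2}$, so it merely caps $\min(|A|,|B|)$ at $X^{1/2 + o(1)}$. Obtaining the full $X^{\delta}$ saving seems to require the uniform prime $k$-tuples input of route (a), or a genuinely new idea; in this sense the conjecture is at least as hard as the inverse large sieve problem in the polynomial-size regime.
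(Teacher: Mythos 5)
The statement you were asked to prove is not proved anywhere in the paper: it is stated as an open conjecture, and the authors say explicitly that they do not know how to prove it for any $\delta \leq \frac{1}{2}$ (and that having it for any single $\delta < \frac{1}{2}$ would already settle the inverse Goldbach problem). Your proposal is consistent with that state of affairs: what you actually establish is the range $\delta > \frac{1}{2}$, and your core argument there is correct and is essentially the paper's own local analysis from the proof of Theorem \ref{ils-ostmann} --- discard elements below $T$, observe that $p \mid a+b$ with $a, b > T \geq p$ would produce a composite, deduce that $(-A_1) \md{p}$ and $B_1 \md{p}$ are disjoint so that $|A_1 \md{p}| + |B_1 \md{p}| \leq p$ (no Cauchy--Davenport needed), pigeonhole over $p \leq \min(T, X^{1/2})$, and apply Proposition \ref{classicls}. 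Two remarks on the parts you did not need: the claim $\ell = O_{\delta}(1)$ in your first ``easy case'' is unjustified as written (covering all classes mod $\ell$ only gives $|A| \leq \ell \leq |B|$), and the appeal to sumset structure theory in the ``opposite extreme'' is hand-waving; both cases are superfluous, since the main argument never uses admissibility.

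On your route (b), two corrections. First, the genus-theory contradiction is backwards: knowing that $\psi_A(\Q) + \psi_B(\Q)$ omits a positive proportion of primes contradicts $A+B$ \emph{containing} all large primes (the Ostmann setting of Theorem \ref{ils-ostmann}), not $A+B$ being \emph{contained in} the primes, which is all the present conjecture gives you; the correct substitute is Lemma \ref{lem6.2}, which shows directly that if $A \subset \psi_A(\Q)$, $B \subset \psi_B(\Q)$ and $A+B$ avoids composites then one of $A, B$ has size $\ll X^{1/3}$. Second, your accounting is slightly pessimistic: feeding $|A_1 \md{p}| + |B_1 \md{p}| \leq p$ into Conjecture \ref{stability-conj} and then applying Lemma \ref{lem6.2} to the quadratic parts gives, conditionally, $\min(|A|,|B|) \ll X^{1/2-c}$, i.e.\ the conjecture for all $\delta$ slightly below $\frac{1}{2}$ --- this is exactly the mechanism of the paper's proof of Theorem \ref{ils-ostmann} --- but it still falls far short of arbitrary $\delta > 0$. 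So your overall assessment stands: the full statement is not proved by your argument, nor by the paper; the honest outcome is a correct unconditional treatment of $\delta > \frac{1}{2}$, a conditional extension just below $\frac{1}{2}$, and an open problem beyond that.
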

We do not know how to prove this for any $\delta \leq \frac{1}{2}$. If one had it for any $\delta < \frac{1}{2}$, the inverse Goldbach problem would follow.

\vspace{11pt}
The proofs of the above theorems are rather diverse and use the large sieve, Gallagher's ``larger sieve'', and several other tools from harmonic analysis and analytic number theory. The proofs of Theorems \ref{thm1.4} and \ref{prog-thm}, which involve lifting the additive structure of the $S_p$ to additive structure on $\A[X]$, also involve some ideas of additive combinatorial flavour, although we do not need to import many results from additive combinatorics to prove them. With very few exceptions (for example, Lemma \ref{progr-energy} depends on standard Fourier arguments given in detail in Lemma \ref{large-fourier}), Sections 3,4,5,6 and 7 may be read independently of one another.

The situation considered in the majority of this paper, in which $\A \md{p}$ is small for \emph{every} prime $p$ (or at least for every prime $p \leq X^{1/2}$), may seem rather restrictive. It would be possible to adapt our arguments and prove many of our theorems under weaker conditions, and we leave this to the reader who has need of such results. However, it seems possible that any set $\A$ for which $|\A \md{p}| \leq (1 - c)p$ for a decent proportion of primes $p$ and for which $|\A[X]| \geq X^c$ for infinitely many $X$ has at least some ``algebraic structure''.  Moreover such statements may well be true in finitary settings, in which $\A$ is restricted to some finite interval $[X]$ and $p$ is only required to range over some (potentially quite small) subinterval of $[X]$. Unfortunately none of our methods come close to establishing such strong results. \vspace{11pt}

\textsc{Acknowledgements.} The authors are very grateful to Jean Bourgain for allowing us to use his ideas in Section \ref{interval-sieve-sec}. The first-named author is supported by ERC Starting Grant 279438 \emph{Approximate Algebraic Structure and Applications}. He is very grateful to the University of Auckland for providing excellent working conditions on a sabbatical during which this work was completed. The second-named author was supported by a Doctoral Prize from the EPSRC when this work was started; by a postdoctoral fellowship from the Centre de Recherches Math\'ematiques, Montr\'eal; and by a research fellowship at Jesus College, Cambridge when the work was completed.

\section{The large sieve and the larger sieve}

\textsc{The large sieve}. Let us begin by briefly recalling a statement of the large sieve bound. The following may be found in Montgomery \cite{montgomery-early}.

\begin{proposition}\label{classicls}
Let $\mathscr{A}$ be a set of positive integers with the property that $\mathscr{A} \mdlem{p} \subset S_p$ for each prime $p$. Then for any $Q,X$ we have the bound
$$ |\mathscr{A}[X]| \leq \frac{X + Q^{2}}{\sum_{q \leq Q} \mu^{2}(q) \prod_{p \mid q} \frac{|S_p^c|}{|S_p|}} \leq \frac{X + Q^2}{\sum_{p \leq Q} \frac{|S_p^c|}{p}}. $$
where $\mu(q)$ denotes the M\"{o}bius function.
\end{proposition}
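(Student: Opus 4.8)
The plan is to deduce this from the analytic large sieve inequality in its standard exponential-sum form, and then carry out the usual "duality/weight" manipulation that converts a bound on an exponential sum into a sieve bound. Recall the analytic large sieve: for any complex numbers $a_n$ supported on $n \in [X]$, and any $Q$, one has
\[
\sum_{q \leq Q} \sum_{\substack{a \md{q} \\ (a,q)=1}} \Bigl| \sum_{n \leq X} a_n e(an/q) \Bigr|^2 \leq (X + Q^2) \sum_{n \leq X} |a_n|^2 .
\]
First I would specialise $a_n = \mathbf{1}_{n \in \A}$, so that the right-hand side is exactly $(X+Q^2)|\A[X]|$. The left-hand side is a sum over primitive additive characters to moduli $q \leq Q$ of $|\widehat{\mathbf{1}_{\A[X]}}(a/q)|^2$, and the task is to bound it below by $|\A[X]|^2$ times the claimed arithmetic factor.

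The key step is the local computation. For a squarefree $q$ with $q = p_1 \cdots p_k$, the inner sum over primitive $a \md{q}$ of $|\sum_{n \in \A[X]} e(an/q)|^2$ factors (by the Chinese Remainder Theorem and multiplicativity of the Ramanujan-type sums) into a product of local contributions, one for each $p \mid q$. For a single prime $p$, the standard identity is
\[
\sum_{\substack{a \md{p} \\ a \neq 0}} \Bigl| \sum_{n \in \A[X]} e(an/p) \Bigr|^2 = p \sum_{b \md p} \Bigl( |\A[X] \cap (b \bmod p)| - \tfrac{|\A[X]|}{p} \Bigr)^2 \cdot \frac{p}{?} \, ,
\]
more precisely, writing $\nu(b)$ for the number of elements of $\A[X]$ in the class $b \md p$, one has $\sum_{a \neq 0 \md p} |\sum_b \nu(b) e(ab/p)|^2 = p\sum_b \nu(b)^2 - (\sum_b \nu(b))^2$. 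Since $\A \md{p} \subset S_p$, only $|S_p|$ of the classes are nonempty, and Cauchy--Schwarz gives $\sum_b \nu(b)^2 \geq |\A[X]|^2/|S_p|$; hence this local sum is at least $|\A[X]|^2 (p/|S_p| - 1) = |\A[X]|^2 |S_p^c|/|S_p|$. Multiplying these local bounds over $p \mid q$ and summing over squarefree $q \leq Q$ with the $\mu^2(q)$ indicator produces the factor $\sum_{q \leq Q} \mu^2(q) \prod_{p\mid q} |S_p^c|/|S_p|$ multiplying $|\A[X]|^2$, which combined with the analytic large sieve yields the first displayed inequality. The second inequality is immediate by dropping all squarefree $q$ except the primes $p \leq Q$ themselves, since $\prod_{p \mid p} |S_p^c|/|S_p| = |S_p^c|/|S_p| \geq |S_p^c|/p$.

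The main obstacle — really the only point requiring care — is getting the multiplicativity bookkeeping right: verifying that the sum over primitive residues $a \md q$ of the squared exponential sum genuinely factors as a product over $p \mid q$ of the local quantities $\sum_{a \neq 0 \md p} |\cdots|^2$, and that the Cauchy--Schwarz step is applied correctly class-by-class so that the $|S_p|$ in the denominator appears (rather than $p$). This is classical and is exactly the computation in Montgomery's exposition \cite{montgomery-early}; once the local lower bound $\sum_{a\neq 0 \md p} |\cdots|^2 \geq |\A[X]|^2 |S_p^c|/|S_p|$ is in hand, the rest is a one-line assembly. I would therefore present the local identity and its Cauchy--Schwarz consequence in detail and merely cite the analytic large sieve inequality and the CRT factorisation.
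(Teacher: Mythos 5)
Your overall route is the right one, and indeed the paper offers no proof of its own here: it simply cites Montgomery, and what you describe is essentially Montgomery's classical derivation of the arithmetic sieve bound from the analytic large sieve. The local computation at a prime modulus is correct: with $\nu(b)$ the number of $n \in \A[X]$ with $n \equiv b \md{p}$, one has $\sum_{a \neq 0 \mdsub{p}} |S(a/p)|^2 = p\sum_b \nu(b)^2 - |\A[X]|^2 \geq |\A[X]|^2\bigl(\tfrac{p}{|S_p|}-1\bigr) = |\A[X]|^2 \tfrac{|S_p^c|}{|S_p|}$, and the final passage to the cruder bound $\sum_{p\le Q}|S_p^c|/p$ is fine.

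However, the composite-modulus step as you state it is false, and this is a genuine gap rather than mere bookkeeping. The quantity $\sum_{(a,q)=1} |S(a/q)|^2$ does \emph{not} factor over the primes dividing $q$: for instance with $\A[X]=\{1,2\}$ and $q=15$ the sum over primitive residues equals $18$, while the product of the two local sums (for $p=3$ and $p=5$) equals $12$. The exponential sum $S$ is not multiplicative in the modulus, so CRT gives only a parametrisation $a/q \equiv a_1/q_1 + a_2/p \pmod 1$ of the primitive fractions, not a product formula for $|S|^2$. The standard repair (and this is what Montgomery actually proves) is to strengthen your local bound to hold around an arbitrary frequency: for every real $\alpha$,
\begin{equation*}
\sum_{a=1}^{p-1} \Bigl| S\bigl(\alpha + \tfrac{a}{p}\bigr) \Bigr|^2 \;\geq\; \frac{|S_p^c|}{|S_p|}\, |S(\alpha)|^2 ,
\end{equation*}
which follows from exactly your Cauchy--Schwarz argument applied to $T_b(\alpha) := \sum_{n \in \A[X],\, n \equiv b \mdsub{p}} e(n\alpha)$, noting $T_b(\alpha)$ vanishes unless $b \in S_p$ and $\sum_{a \mdsub{p}} |S(\alpha + a/p)|^2 = p \sum_b |T_b(\alpha)|^2$. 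One then proves $\sum_{(a,q)=1}|S(a/q)|^2 \geq |\A[X]|^2 \prod_{p \mid q} \tfrac{|S_p^c|}{|S_p|}$ for squarefree $q$ by induction on the number of prime factors, writing $q = q_1 p$ and applying the displayed bound with $\alpha = a_1/q_1$ for each primitive $a_1 \md{q_1}$. With that lemma in place, your assembly via the analytic large sieve (including the $q=1$ term, which contributes $|\A[X]|^2$) goes through verbatim.
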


The second bound is a little crude but has the virtue of being simple: we will use it later on. In the particular case that $|S_p| \leq \frac{1}{2}(p+1)$ for all $p$, discussed in the introduction, the first bound implies upon setting $Q := X^{1/2}$ that
$$ |\mathscr{A}[X]| \leq \frac{2X}{\sum_{q \leq X^{1/2}} \mu^{2}(q) \prod_{p \mid q} \frac{p-1}{p+1}} \ll X^{1/2}, $$ as we claimed.

The large sieve may also be profitably applied to ``small sieve'' situations in which $|S_p| = p - O(1)$ (as opposed to ``large sieve'' situations in which $p - |S_p|$ is large). We will need one such result later on, in \S \ref{stab-sec}. 

\begin{lemma}\label{small-from-large}
Suppose that $\mathscr{B} \subset \Z$ is a set with the property that $\mathscr{B} \mdlem{p}$ misses $w(p)$ residue classes, for every prime $p$. Suppose that the function $w$ has average value $k$ in the \textup{(}fairly weak\textup{)} sense that $\frac{1}{Z}\sum_{Z \leq p \leq 2Z} w(p) \log p = k + O(\frac{1}{\log^2 Z})$ for all $Z$. Then $|\mathscr{B} \cap [-N, N]| \ll N (\log N)^{-k}$ for all $N$.
\end{lemma}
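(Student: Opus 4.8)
The plan is to deduce this from the second (cruder) bound in Proposition \ref{classicls}, applied with a careful choice of $Q$ and with attention to the fact that $\mathscr{B}$ may contain negative integers. First I would reduce to the set $\mathscr{B}' := \{b - \min : b \in \mathscr{B}\cap[-N,N]\} \subset [0, 2N]$, or more simply observe that $\mathscr{B} \cap [-N,N]$ is a translate of a set inside $[2N]$, and translation does not change the number of residue classes occupied modulo any $p$ (it just permutes them); so it suffices to bound $|\mathscr{C}[2N]|$ for a set $\mathscr{C}$ with $|S_p^c| = w(p)$ for every $p$. Then Proposition \ref{classicls} gives
\[
|\mathscr{C}[2N]| \leq \frac{2N + Q^2}{\sum_{p \leq Q} \frac{w(p)}{p}}.
\]
Choosing $Q := N^{1/2}$ makes the numerator $\ll N$, so everything comes down to showing that the denominator is $\gg (\log N)^{k}$, i.e. that
\[
\sum_{p \leq Q} \frac{w(p)}{p} \gg k \log\log Q
\]
— wait, that would only give a power of $\log\log N$. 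So the crude bound is too lossy; I instead need the \emph{first} bound in Proposition \ref{classicls}, which uses the full sum over squarefree $q \leq Q$.

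So the real argument: apply the first bound in Proposition \ref{classicls} with $Q = N^{1/2}$, giving
\[
|\mathscr{C}[2N]| \leq \frac{2N + Q^2}{\sum_{q \leq Q} \mu^2(q) \prod_{p \mid q} \frac{w(p)}{p - w(p)}} \ll \frac{N}{\sum_{q \leq Q} \mu^2(q) \prod_{p \mid q} \frac{w(p)}{p}},
\]
where in the last step I use $p - w(p) \leq p$ (valid since $w(p) \geq 0$; the denominator can only get larger, and for the finitely many $p$ with $w(p)$ comparable to $p$ this costs only an absolute constant). Now the key step is the lower bound
\[
\sum_{q \leq Q} \mu^2(q) \prod_{p \mid q} \frac{w(p)}{p} \gg (\log Q)^{k} \gg (\log N)^{k}.
\]
This is a standard mean-value / Rankin-type estimate for a multiplicative function. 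The heuristic is that the Euler product $\prod_{p \leq Q}\bigl(1 + \frac{w(p)}{p}\bigr)$ has logarithm $\sum_{p \leq Q} \frac{w(p)}{p} + O(1)$, and the hypothesis $\frac1Z \sum_{Z \leq p \leq 2Z} w(p)\log p = k + O(1/\log^2 Z)$ summed dyadically gives, via partial summation, $\sum_{p \leq Q} \frac{w(p)}{p} = k \log\log Q + C + o(1)$ for a suitable constant — hmm, that again only produces $(\log\log Q)^k$.

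Let me reconsider: the hypothesis is $\frac1Z\sum_{Z\le p\le 2Z} w(p)\log p = k + O(1/\log^2 Z)$, so $\sum_{Z \le p \le 2Z} w(p)\log p \approx kZ$, i.e. $w(p)$ has average size about $k$ weighted by $\log p$ — consistent with $\sum_{p\le Q} \frac{w(p)}{p} \approx k \sum_{p \le Q}\frac{\log p}{p \log p}$... Actually $\sum_{p \le Q} \frac{w(p)\log p}{p} \approx k \log Q$ by partial summation from the dyadic hypothesis (summing $kZ \cdot \frac{1}{Z}$ over $O(\log Q)$ dyadic blocks, the error terms $O(1/\log^2 Z)$ being summable). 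Hence by partial summation $\sum_{p \le Q}\frac{w(p)}{p} \approx k\log\log Q$ — so I really do only get $(\log\log Q)^k$ from a naive Euler product, yet the lemma claims $(\log N)^{-k}$, which would need $\sum \frac{w(p)}{p} \approx k \log\log N$ in the exponent of $e$, i.e. $(\log N)^k$. These are reconciled because $\prod_{p \le Q}(1 + \frac{w(p)}{p})$ is NOT $\exp(\sum \frac{w(p)}{p})$ when the $w(p)$ are large: when $w(p)$ can be as large as $\asymp p$, individual factors $1 + w(p)/p$ are bounded below by absolute constants $> 1$ for a positive density of primes, contributing $e^{c\pi(Q)}$ — far too big. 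That can't be right either for arbitrary $w$; the point must be that the \emph{sum} $\sum_{q\le Q}\mu^2(q)\prod_{p|q}\frac{w(p)}{p}$, not the full Euler product, is what appears, and restricting $q \le Q$ tames it.

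\textbf{The clean approach.} I will prove $\sum_{q \le Q}\mu^2(q)\prod_{p\mid q}\frac{w(p)}{p} \gg (\log Q)^{k}$ by the following device. Pick a threshold $T$ (an absolute constant, or slowly growing) and split $w(p) = w_{\le T}(p) + w_{>T}(p)$ where $w_{\le T}(p) = \min(w(p), T)$. Discard the large part and use only primes with the truncated weight; equivalently, lower-bound by summing over squarefree $q \le Q$ supported on primes $p$ with $w(p) \geq 1$, giving
\[
\sum_{q \le Q}\mu^2(q)\prod_{p\mid q}\frac{w(p)}{p} \;\geq\; \sum_{\substack{q \le Q \\ p \mid q \Rightarrow w(p)\geq 1}} \mu^2(q)\prod_{p \mid q}\frac{\min(w(p),T)}{p}.
\]
For the truncated multiplicative function $g(p) := \min(w(p),T)/p \in [0, T/p]$, standard results (e.g. a Rankin/Shiu-type or simply the Selberg--Delange / Wirsing machinery, or even an elementary Mertens-type computation since $\sum_p g(p)p = \sum_p \min(w(p),T) \asymp \min(k,T)\,\pi$-density) give $\sum_{q\le Q}\mu^2(q)\prod_{p|q}g(p) \gg (\log Q)^{\kappa}$ with $\kappa$ the mean value of $p\cdot g(p) = \min(w(p),T)$ in the $\log$-weighted density sense — and this mean value is $\min(k,T) + o(1)$ by the hypothesis, which for $T \geq k$ (here I use that $k$ is fixed and finite) equals $k$. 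Letting $T = \lceil k \rceil$ we obtain the desired $(\log Q)^{k - o(1)}$, and absorbing the $o(1)$ and replacing $\log Q = \frac12\log N$ by $\log N$ at the cost of the implied constant finishes the proof.

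\textbf{Main obstacle.} The genuinely substantive step is the lower bound $\sum_{q \le Q}\mu^2(q)\prod_{p\mid q}g(p) \gg (\log Q)^{\kappa}$ for a bounded multiplicative function with $\sum_{p\le y}g(p)\log p \sim \kappa\log y$; everything else (the translation to $[0,2N]$, the choice $Q = N^{1/2}$, discarding the $p - w(p)$ versus $p$ discrepancy, the truncation) is routine. I would establish that step either by quoting a standard mean-value theorem for multiplicative functions or, to keep the paper self-contained, by the following two-line argument: write $F(s) = \sum_q \mu^2(q)\prod_{p|q}g(p) q^{-s} = \prod_p(1 + g(p)p^{-s})$, note $F(s) \gg \zeta(1+1/\log Q)^{\kappa} \gg (\log Q)^{\kappa}$ near $s = 1$ by comparing Euler factors (using $1 + g(p)p^{-s} \geq (1 + p^{-1-s})^{g(p)}$ when $g(p) \le 1$, and handling the finitely-many-on-average primes with $g(p) > 1$ separately by hand), and then extract the partial sum $\sum_{q \le Q}$ by a Tauberian/partial summation argument — or more simply, avoid complex analysis entirely and iterate the elementary identity $\sum_{q \le Q}\mu^2(q)h(q) \cdot \frac1{\log Q}\bigl(\sum_{p\le Q}\frac{h(p)\log p}{p}\bigr) \lesssim \sum_{q\le Q}\mu^2(q)h(q)$ that underlies the proof of Mertens' theorem. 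I expect the cleanest writeup to cite an off-the-shelf lemma, so I would flag in the text that the estimate is a standard consequence of the theory of mean values of non-negative multiplicative functions.
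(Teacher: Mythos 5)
Your overall skeleton coincides with the paper's: translate into an interval of positive integers, apply the first bound of Proposition \ref{classicls} with $Q=N^{1/2}$, use $\frac{w(p)}{p-w(p)}\geq\frac{w(p)}{p}$, and reduce the whole lemma to the lower bound $\sum_{q\leq Q}\mu^2(q)\prod_{p\mid q}\frac{w(p)}{p}\gg(\log N)^k$. The genuine gap is in your ``clean approach'' to that lower bound. You truncate $w$ at $T=\lceil k\rceil$ and assert that the log-weighted mean of $\min(w(p),T)$ is $\min(k,T)+o(1)=k$. That is not implied by the hypothesis, which only controls the \emph{average} of $w(p)$: the mass may be concentrated on primes where $w(p)>T$. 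For instance, if in each dyadic range $w(p)=2k$ on half the primes (by $\log$-weight) and $w(p)=0$ on the rest, the hypothesis holds with average $k$, but $\min(w(p),T)$ has average about $\lceil k\rceil/2<k$; more extreme examples (large $w$ on a sparse set of primes) push the truncated average to $o(1)$. Your argument as written therefore only yields $(\log N)^{\kappa'}$ for some $\kappa'$ that may be strictly smaller than $k$, which is not the lemma.

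The truncation was never needed, and the worry that drove you to it rests on a slip in your own heuristic: with $g(p):=w(p)/p$ one has $\sum_{p\leq Q}g(p)\approx k\log\log Q$, so the Euler-product heuristic gives $\exp(k\log\log Q)=(\log Q)^k$, not $(\log\log Q)^k$ — there is no discrepancy to ``reconcile''. Moreover $g(p)\leq 1$ automatically, since $w(p)\leq p$, so the boundedness hypotheses of the standard mean-value theorems for nonnegative multiplicative functions are met without any surgery on $w$; the only thing to verify is $\sum_{p\leq x}g(p)\log p=k\log x+O(1)$, which follows by partial summation from the dyadic hypothesis (the error terms $O(Z/\log^2 Z)$ being exactly what makes the $O(1)$ work). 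Quoting such a theorem at this point — the paper uses Theorem A.5 of Friedlander and Iwaniec \cite{opera-cribo}, which gives $\sum_{q\leq x}g(q)\sim c_g\log^k x$ — closes the argument. If you insist on a self-contained proof, you must still supply a genuine Wirsing-type \emph{lower} bound for the partial sums $\sum_{q\leq Q}\mu^2(q)g(q)$, not just the Euler-product upper bound; but you may do so directly for $g(p)=w(p)/p\leq 1$, with no splitting of $w$ into small and large parts.
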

\begin{proof}
In view of Proposition \ref{classicls}, it would suffice to know that
$$ \sum_{q \leq N^{1/2}} \mu^2(q) \prod_{p | q} \frac{w(p)}{p - w(p)} \gg \log^k N$$
for all large $N$. If we define a multiplicative function $g(n)$, supported on squarefree integers, by $g(p) := w(p)/p$, then it would obviously suffice to know that $\sum_{q \leq N^{1/2}} g(q) \gg \log^k N$ for all large $N$. However there is an extensive theory, dating back to Hal\'{a}sz, Wirsing and others, that gives asymptotics and bounds for sums of multiplicative functions. For example, partial summation and the assumption that $\sum_{Z \leq p \leq 2Z} w(p) \log p = k Z + O(\frac{Z}{\log^2 Z})$ show that $g$ satisfies the conditions of Theorem A.5 of Friedlander and Iwaniec~\cite{opera-cribo}, and therefore
$$ \sum_{q \leq N} g(q) \sim c_{g} \log^k N \;\;\; \text{as} \; N \rightarrow \infty , $$
for a certain constant $c_{g} > 0$.
\end{proof}

\textsc{The larger sieve}. The ``larger sieve'' was introduced by Gallagher \cite{gallagher}. A pleasant discussion of it may be found in chapter 9.7 of Friedlander and Iwaniec \cite{opera-cribo}. We will apply it several times in the paper, and we formulate a version suitable for those applications.

\begin{theorem}\label{larger-sieve}
Let $0 < \delta \leq 1$, and let $Q > 1$. Let $\mathscr{P}$ be a set of primes. For each prime $p \in \mathscr{P}$, suppose that one is given a set $S_p \subset \Z/p\Z$, and write $\sigma_p := |S_p|/p$. Suppose that there is some set $\A'_p \subset \A$, $|\A'_p[X]| \geq \delta |\A[X]|$, such that $\A'_p \md{p} \subset S_p$. Then 
\[ |\A[X]| \ll \frac{Q}{\delta^2 \sum_{p \in \mathscr{P}, p \leq Q} \frac{\log p}{\sigma_p p} - \log  X},\] provided that the denominator is positive.
\end{theorem}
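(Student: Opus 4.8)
The plan is to follow Gallagher's original counting argument, adapted to the weighted setup with the density parameter $\delta$. The core idea is to count, in two different ways, the number of triples $(a, a', p)$ where $a, a' \in \A[X]$, $p \in \mathscr{P}$ with $p \leq Q$, and $a \equiv a' \md{p}$, or equivalently to estimate $\sum_{p} \sum_{s \in \Z/p\Z} |\{a \in \A[X] : a \equiv s\}|^2$ and play it off against the trivial bound coming from $a = a'$. First I would fix $p \in \mathscr{P}$ with $p \leq Q$ and look at $\A'_p[X]$, which has size $\geq \delta |\A[X]|$ and is supported on the $\sigma_p p$ residue classes of $S_p$. By Cauchy--Schwarz (convexity), the number of pairs $(a, a') \in \A'_p[X]^2$ with $a \equiv a' \md p$ is at least $|\A'_p[X]|^2 / (\sigma_p p) \geq \delta^2 |\A[X]|^2/(\sigma_p p)$. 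Summing over $p$ and weighting each congruence $a \equiv a' \md p$ by $\log p$, the quantity $\sum_{p \in \mathscr{P}, p \leq Q} (\log p) \cdot \#\{(a,a') \in \A[X]^2 : a \equiv a' \md p\}$ is bounded below by $\delta^2 |\A[X]|^2 \sum_{p \in \mathscr{P}, p \leq Q} \frac{\log p}{\sigma_p p}$.

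Next I would bound the same double sum from above. Separate the diagonal contribution $a = a'$: here $\sum_{p \mid 0}$ does not make sense in the usual way, so instead I bound, for each fixed pair, $\sum_{p : p \mid a - a', p \leq Q} \log p$. When $a \neq a'$ this is $\leq \log |a - a'| \leq \log X$ by the prime factorization of $a - a'$ (only primes up to $Q$ contribute, and their product divides $|a-a'| < X$). When $a = a'$ this is $\leq \sum_{p \in \mathscr{P}, p \leq Q} \log p \leq \log\big(\prod_{p \leq Q} p\big) \ll Q$ by Chebyshev/Mertens. Thus the whole upper bound is $\leq |\A[X]|^2 \log X + |\A[X]| \cdot O(Q)$, and combining with the lower bound gives
\[ \delta^2 |\A[X]|^2 \sum_{p \in \mathscr{P}, p \leq Q} \frac{\log p}{\sigma_p p} \leq |\A[X]|^2 \log X + O(Q |\A[X]|). \]
Rearranging, $|\A[X]| \big( \delta^2 \sum_{p} \frac{\log p}{\sigma_p p} - \log X \big) \ll Q$, which is exactly the claimed bound once the bracketed denominator is positive.

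The one genuinely delicate point — and the step I expect to need the most care — is the bookkeeping of the $\log p$ weights in the upper bound when $a \neq a'$: one must be sure that the primes $p \in \mathscr{P}$, $p \leq Q$ dividing $a - a'$ are \emph{distinct} (they are, being distinct primes) so that $\sum_{p \mid a-a'} \log p = \log \prod_{p \mid a - a'} p \leq \log|a-a'| \leq \log X$, with no extra factor from prime powers since we only ever sum $\log p$ (not $\log p^k$) over primes $p$, not prime powers. Everything else is convexity, Chebyshev's estimate $\sum_{p \leq Q} \log p \ll Q$, and elementary rearrangement; no input from additive combinatorics or harmonic analysis is required. The factor $\delta^2$ (rather than $\delta$) is an automatic consequence of applying convexity to a set of relative density $\delta$ inside $\A[X]$ on both coordinates of the pair.
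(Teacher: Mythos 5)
Your proposal is correct and is essentially the paper's own argument: the same double count of congruent pairs weighted by $\log p$, the same Cauchy--Schwarz step applied to $\A'_p[X]$ inside the classes of $S_p$ giving the $\delta^2/(\sigma_p p)$ factor, and the same upper bound $\sum_{p \mid a-a'}\log p \leq \log X$ off the diagonal plus a Chebyshev bound $O(Q)$ on the diagonal. The only cosmetic difference is that the paper removes the diagonal at the outset and adds back $|\A[X]|\sum_{p\leq Q}\log p$, whereas you keep it in the pair count; the bookkeeping is identical.
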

\emph{Remark.} In this paper we will always have $\delta$ at least some absolute constant, not depending on $X$, and very often we will have $\delta \approx 1$.
\begin{proof}
We examine the expression
\[ I := \sum_{\substack{p \in \mathscr{P} \\ p \leq Q}} \sum_{\substack{x,y \in \A[X] \\ x \neq y}} 1_{p | x - y} \log p.\]
On the one hand we have
\[ \sum_{p \in \mathscr{P}} 1_{p | n} \log p \leq \sum_{p} 1_{p | n} \log p \leq \log n,\] and therefore 
\[ I \leq |\A[X]|^2 \log X.\]
On the other hand, writing $\A(a,p;X)$ for the number of $x \in \A[X]$ with $x \equiv a \md{p}$, we have
\[ I = \sum_{\substack{p \in \mathscr{P}\\ p \leq Q}} \sum_{a \mdsub{p}} |\A(a,p;X)|^2 \log p - |\A[X]| \sum_{\substack{p \in \mathscr{P}\\ p \leq Q}} \log p.\]
Comparing these facts yields
\begin{equation}\label{to-use} \sum_{\substack{p \in \mathscr{P} \\ p \leq Q}} \sum_{a \mdsub{p}} |\A(a,p;X)|^2 \log p \leq |\A[X]|^2 \log X + O(|\A[X]|Q),\end{equation}
and so of course, since $\A'_p \subset \A$,
\[ \sum_{\substack{p \in \mathscr{P} \\ p \leq Q}} \sum_{a \mdsub{p}} |\A'_p(a,p;X)|^2 \log p \leq  |\A[X]|^2 \log X + O(|\A[X]|Q).\]
However by the Cauchy--Schwarz inequality and the fact that $\A'_p \md{p} \subset S_p$ we have
\[ \sum_{a \mdsub{p}} |\A'_p(a,p;X)|^2 \geq \frac{|\A'_p[X]|^2}{\sigma_p p} \geq \delta^2 \frac{|\A[X]|^2}{\sigma_p p}.\]
Summing over $p$ and rearranging, we obtain the result.
\end{proof}
The larger sieve bound can be a little hard to get a feel for, so we give an example. Suppose that $\mathscr{P}$ consists of all primes and that $\sigma_p = \alpha$ for all $p$. Take $\delta = 1$. Then, since $\sum_{p \leq Q} \frac{\log p}{p} = \log Q + O(1)$, the larger sieve bound is essentially 
\[ |\A[X]| \ll \frac{Q}{\frac{1}{\alpha} \log Q - \log X}.\]
Taking $Q$ a little larger than $X^{\alpha}$, we obtain the bound $|\A[X]| \ll X^{\alpha + o(1)}$. This beats an application of the large sieve when $\alpha < \frac{1}{2}$, that is to say when we are sieving out a majority of residue classes (hence the terminology ``larger sieve''). However in the type of problems we are generally considering in this paper, where $\alpha = \frac{1}{2}$, we only recover the bound $|\A[X]| \ll X^{1/2 + o(1)}$, as of course we must since nothing has been done to exclude the example where $\A$ is a set of values of a quadratic.

In actual fact one of our three applications of the larger sieve (in the proof of Theorem \ref{thm1.4}) requires an inspection of the above proof, rather than an application of the result itself. This is the observation that when $\sigma_p \approx \frac{1}{2}$ the larger sieve \emph{does} beat the bound $|\A[X]| \ll X^{1/2 + o(1)}$ unless $\A$ satisfies a certain ``uniform fibres'' condition. Recall that if $\A$ is a set of integers then $\A(a,p;X)$ is the number of $x \in \A[X]$ with $x \equiv a \md{p}$.

\begin{lemma}\label{lem1.1}
Let $\kappa > 0$ and $\eta > 0$ be small parameters. Suppose that $\A$ is a set of integers occupying at most $(p+1)/2$ residue classes modulo $p$ for all $p$. Say that $\A[X]$ has {\em $\eta$-uniform fibres above $p$} if 
\[ \sum_{a \mdsublem{p}} |\A(a, p;X)|^2 \leq (2 + \eta) |\A[X]|^2/p.\]  Let $\mathscr{P}_{\unif}$ be the set of primes above which $\A[X]$ has $\eta$-uniform fibres. Then either $|\A[X]| \leq X^{1/2 - \kappa}$ or else ``most'' fibres are $\eta$-uniform in the sense that
 \[ \sum_{\substack{p \leq X^{1/2}, \\ p \notin \mathscr{P}_{\unif}}} \frac{\log p}{p} \leq \frac{3\kappa \log X + O(1)}{\eta}.\]
\end{lemma}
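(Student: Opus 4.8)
The plan is to unpack the proof of Theorem~\ref{larger-sieve} with a judicious choice of the cut-off $Q$, rather than to apply the statement of that theorem. We may assume $|\A[X]| > X^{1/2-\kappa}$, since otherwise the first alternative in the conclusion holds; set $N := |\A[X]|$, and note that by the large sieve (Proposition~\ref{classicls}) we also have $N \ll X^{1/2}$, so we may assume in addition that $N \le X^{1/2}$ (if $N > X^{1/2}$ the estimate below, run with $Q = N$, already gives a stronger conclusion). Split the sum to be estimated at $p = N$:
\[ \sum_{\substack{p \le X^{1/2}\\ p \notin \mathscr{P}_{\unif}}} \frac{\log p}{p} \;=\; \sum_{\substack{p \le N\\ p \notin \mathscr{P}_{\unif}}} \frac{\log p}{p} \;+\; \sum_{\substack{N < p \le X^{1/2}\\ p \notin \mathscr{P}_{\unif}}} \frac{\log p}{p}. \]
For the second sum we discard the condition $p \notin \mathscr{P}_{\unif}$ and apply Mertens' theorem $\sum_{p \le y} \frac{\log p}{p} = \log y + O(1)$; since $N > X^{1/2-\kappa}$ this contributes at most $\frac12 \log X - \log N + O(1) \le \kappa \log X + O(1)$.

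The first sum is where the larger sieve enters. We apply inequality~\eqref{to-use} from the proof of Theorem~\ref{larger-sieve}, taking $\mathscr{P}$ to be the set of all primes, $\A'_p = \A$ (so $\delta = 1$), and --- the key choice --- $Q = N$; this yields
\[ \sum_{p \le N} \sum_{a \mdsub{p}} |\A(a,p;X)|^2 \log p \;\le\; N^2 \log X + O(N^2). \]
On the other hand, for every prime $p$, Cauchy--Schwarz together with $|\A \mdlem{p}| \le \tfrac12(p+1)$ gives $\sum_{a} |\A(a,p;X)|^2 \ge \frac{2N^2}{p+1}$, while if $p \notin \mathscr{P}_{\unif}$ the definition gives the stronger bound $\sum_a |\A(a,p;X)|^2 > (2+\eta)\frac{N^2}{p} \ge \frac{2N^2}{p+1} + \frac{\eta N^2}{p+1}$, the last step being the elementary inequality $\frac{2+\eta}{p} - \frac{2}{p+1} \ge \frac{\eta}{p+1}$. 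Summing these lower bounds against $\log p$ over $p \le N$, and using $\sum_{p \le N} \frac{\log p}{p+1} = \log N + O(1)$ (immediate from Mertens since $\sum_p \frac{\log p}{p(p+1)}$ converges), we obtain
\[ \sum_{p \le N} \sum_a |\A(a,p;X)|^2 \log p \;\ge\; 2N^2 \log N + O(N^2) + \eta N^2 \sum_{\substack{p \le N\\ p \notin \mathscr{P}_{\unif}}} \frac{\log p}{p+1}. \]
Comparing the two displays, dividing by $\eta N^2$, and invoking $N > X^{1/2-\kappa}$ once more gives $\sum_{p \le N,\, p \notin \mathscr{P}_{\unif}} \frac{\log p}{p+1} \le \frac{\log X - 2\log N + O(1)}{\eta} \le \frac{2\kappa \log X + O(1)}{\eta}$. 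Replacing $\frac{1}{p+1}$ by $\frac1p$ costs another $O(1)$, and adding in the bound for the second sum --- using $\eta \le 1$, so that $\tfrac1\eta \ge 1$, to absorb $\kappa \log X + O(1)$ into $\frac{\kappa\log X + O(1)}{\eta}$ --- produces exactly the claimed bound $\sum_{p \le X^{1/2},\, p \notin \mathscr{P}_{\unif}} \frac{\log p}{p} \le \frac{3\kappa \log X + O(1)}{\eta}$.

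The one place where genuine care is needed is the choice $Q = N$ in~\eqref{to-use}: the error term there is $O(|\A[X]|Q) = O(NQ)$, so taking $Q = X^{1/2}$ naively would leave an error $O(NX^{1/2})$ that is not $O(N^2)$ when $N$ is close to $X^{1/2-\kappa}$, and one would only extract a useless bound of size $\asymp X^{\kappa}/\eta$. Localising to $p \le N$ and handling $N < p \le X^{1/2}$ by bare Mertens estimates is what repairs this. Everything else --- the two Cauchy--Schwarz lower bounds, and the harmless $\frac1p$ versus $\frac1{p+1}$ bookkeeping stemming from $|S_p| \le \tfrac12(p+1)$ rather than $\tfrac12 p$ --- is routine.
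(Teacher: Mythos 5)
Your proof is correct and takes essentially the same route as the paper: both unpack the larger sieve argument to the inequality \eqref{to-use}, combine the Cauchy--Schwarz lower bound $2|\A[X]|^2/(p+1)$ with the extra gain $\eta|\A[X]|^2/p$ at non-uniform primes, and cover the remaining range up to $X^{1/2}$ by a trivial Mertens estimate. The only (immaterial) difference is your cutoff $Q=|\A[X]|$ where the paper takes $Q=X^{1/2-\kappa}$; both choices render the error term $O(|\A[X]|\,Q)$ harmless in exactly the way you describe.
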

\begin{proof}
Let $\mathscr{P}$ be the set of all primes, and let $Q  := X^{1/2 - \kappa}$. 
We proceed as in the proof of the larger sieve until \eqref{to-use}, which was the inequality
\[ \sum_{p \leq Q} \sum_{a \mdsub{p}} |\A(a,p;X)|^2 \log p \leq |\A[X]|^2 \log X + O(|\A[X]|Q).\]
Now by the Cauchy--Schwarz inequality we have \[ \sum_{a \mdsub{p}} |\A(a,p;X)|^2 \geq 2|\A[X]|^2/(p+1)\] for all $p$. Using this and the estimate $\sum_{p \leq Q} \log p/p = \log Q + O(1)$, we see that the left-hand side of \eqref{to-use} is at least
\[ 2|\A[X]|^2 (\log Q + O(1)) + \eta |\A[X]|^2 \sum_{\substack{p \leq Q \\ p \notin \mathscr{P}_{\unif}}} \frac{\log p}{p}.\]
Therefore
\[ \eta \sum_{\substack{p \leq Q \\ p \notin \mathscr{P}_{\unif}}} \frac{\log p}{p} \leq \log X - 2 \log Q + O(1) + O(\frac{Q}{|\A[X]|}).\] If $|\A[X]| \leq X^{1/2 - \kappa}$ then we are done; otherwise, the term $O(Q/|\A[X]|)$ may be absorbed into the $O(1)$ term and, after a little rearrangement, we obtain 
\[ \sum_{\substack{p \leq Q \\ p \notin \mathscr{P}_{\unif}}} \frac{\log p}{p} \leq \frac{2 \kappa \log X + O(1)}{\eta}.\]
Since 
\[ \sum_{Q \leq p \leq X^{1/2}} \frac{\log p}{p} = \kappa \log X + O(1),\] the claimed bound follows.
\end{proof}

\section{Sieving by additively structured sets}

Our aim in this section is to establish Theorem \ref{thm1.4}.

Let $A$ be a finite set of integers. As is standard, we write $E(A,A)$ for the {\em additive energy} of $A$, that is to say the number of quadruples $(a_1,a_2,a_3,a_4) \in A^4$ with $a_1 + a_2 = a_3 + a_4$. If $p$ is a prime, write $E_p(A, A)$ for the number of quadruples with $a_1 + a_2 \equiv a_3 + a_4 \md{p}$. It is easy to see that $E_p(A,A) \geq |A|^4/p$. In situations where this inequality is not tight, we can get a lower bound for the additive energy $E(A,A)$. To do this we will use the {\em analytic large sieve inequality}, which is something like an approximate version of Bessel's inequality (and which leads, in a non-obvious way, to the large sieve bound that we stated as Proposition \ref{classicls}). We cite the following version, which is best possible in various aspects, from Chapter 9.1 of Friedlander and Iwaniec \cite{opera-cribo}.

\begin{proposition}
Let $0 < \delta \leq 1/2$, and suppose that $\theta_{1}, \dots , \theta_{R} \in \R/\Z$ form a $\delta$-spaced set of points, in the sense that $\Vert \theta_{r} - \theta_{s} \Vert \geq \delta$ for all $ r \neq s$ where $\Vert \cdot \Vert$ denotes distance to the nearest integer. Suppose that $(a(x))_{M < x \leq M+X}$ are any complex numbers, where $X$ is a positive integer. Then
\[ \sum_{r=1}^{R} \bigg|\sum_{M < x \leq M+X} a(x) e(\theta_{r}x) \bigg|^{2} \leq (X - 1 + \delta^{-1}) \sum_{M < x \leq M+X} |a(x)|^{2}, \]
where as usual $e(\theta) := \exp\{2\pi i \theta\}$. 
\end{proposition}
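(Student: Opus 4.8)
The plan is to prove the equivalent \emph{dual} inequality, evaluate the resulting sum by Poisson summation, and use a Beurling--Selberg extremal majorant to extract the sharp constant. By the duality principle for the $\ell^2$ operator norm (a finite matrix and its conjugate transpose have equal $\ell^2\to\ell^2$ norm), the asserted estimate is equivalent to the statement that, for all complex numbers $b_1,\dots,b_R$,
\[ \sum_{M < n \leq M+X} \Big| \sum_{r=1}^{R} b_r\, e(\theta_r n) \Big|^{2} \;\leq\; (X - 1 + \delta^{-1}) \sum_{r=1}^{R} |b_r|^{2}; \]
concretely one applies the hypothesised inequality to $a(n) := \overline{\sum_r b_r\, e(\theta_r n)}$ and unwinds, noting that $\{-\theta_r\}$ is again $\delta$-spaced. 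First I would record this reduction, observe that after a translation of the index set we may assume the $X$ integers involved are $1,\dots,X$ (this changes none of the moduli $|\sum_n a(n)e(\theta_r n)|$ nor the quantity $\sum_n|a(n)|^2$), and set $G(x) := \sum_{r} b_r\, e(\theta_r x)$, now regarded as a function of the real variable $x$.

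Next I would introduce the majorant. The function $|G(x)|^{2} = \sum_{r,s} b_r \overline{b_s}\, e\big((\theta_r - \theta_s)x\big)$ has Fourier transform supported on the finite set $\{\theta_r - \theta_s\}$, which, choosing representatives $\theta_r \in [0,1)$, lies in the open interval $(-1,1)$. Since $\{1,\dots,X\}$ is exactly the set of integers in the closed interval $[1,X]$, it suffices to bound $\sum_{n \in \Z} m(n)|G(n)|^{2}$ for any $m \geq 0$ on $\R$ with $m \geq \mathbf{1}_{[1,X]}$. Here I would take $m$ to be the \emph{Beurling--Selberg extremal majorant} of $\mathbf{1}_{[1,X]}$ of exponential type $2\pi\Delta$ for a fixed $\Delta < \delta$: this $m$ satisfies $m \geq \mathbf{1}_{[1,X]} \geq 0$, is integrable with $\int_{\R} m = (X-1) + \Delta^{-1}$ (the optimum for this one-sided approximation problem), and, being of exponential type $\leq 2\pi\Delta$ and in $L^1$, has $\widehat{m}$ supported in $[-\Delta,\Delta]$.

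The computation then runs as follows. By Poisson summation (legitimate since $m|G|^2 \in L^1(\R)$ has compactly supported Fourier transform), $\sum_{n \in \Z} m(n)|G(n)|^{2} = \sum_{k \in \Z} \widehat{m|G|^{2}}(k)$, where $\widehat{m|G|^{2}}(k) = \sum_{r,s} b_r\overline{b_s}\, \widehat{m}\big(k - (\theta_r - \theta_s)\big)$. The $\delta$-spacing hypothesis is precisely what kills every term except the main one: for $r \neq s$ the real number $\theta_r - \theta_s \in (-1,1)$ lies at distance $\geq \delta > \Delta$ from every integer, so $\widehat{m}(k-(\theta_r-\theta_s)) = 0$; and for $r=s$ one needs $k \in [-\Delta,\Delta]$, i.e.\ $k = 0$. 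Hence $\sum_{n \in \Z} m(n)|G(n)|^{2} = \widehat{m}(0)\sum_r |b_r|^{2} = \big((X-1) + \Delta^{-1}\big)\sum_r |b_r|^{2}$, and since $|G(n)|^{2} \geq 0$ and $m(n) \geq \mathbf{1}_{[1,X]}(n) = \mathbf{1}_{\{1,\dots,X\}}(n)$ on $\Z$, this dominates $\sum_{n=1}^{X}|G(n)|^{2}$. Letting $\Delta \uparrow \delta$ yields the dual inequality with the sharp constant $X - 1 + \delta^{-1}$.

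The main obstacle is the input I have quietly assumed: the existence of the Beurling--Selberg extremal function with the stated properties --- in particular that among entire functions of exponential type $\leq 2\pi\Delta$ majorising $\mathbf{1}_{[a,b]}$ pointwise one can achieve integral exactly $(b-a) + \Delta^{-1}$. This is the classical Beurling--Selberg construction (built from the Beurling majorant of $\operatorname{sgn}$), and supplying it is the one genuinely non-routine ingredient; everything else above is Poisson-summation bookkeeping. If one will settle for a constant of the shape $O(X + \delta^{-1})$ rather than the optimal one, the extremal function can be avoided entirely: Gallagher's smoothing inequality applied to $S(\theta) := \sum_{M<n\le M+X} a(n)e(n\theta)$, namely $|S(\theta_r)|^{2} \leq \delta^{-1}\!\int_{|\beta| < \delta/2} |S(\theta_r + \beta)|^{2}\,d\beta + \tfrac12\!\int_{|\beta| < \delta/2} \big|\,\tfrac{d}{d\beta}|S(\theta_r+\beta)|^{2}\big|\,d\beta$, summed over the pairwise disjoint arcs of radius $\delta/2$ about the $\theta_r$ and combined with Parseval and Bernstein's inequality for trigonometric polynomials (noting $|S|^2$ has degree $\leq X-1$), already gives $\sum_r |S(\theta_r)|^{2} \ll (X + \delta^{-1})\sum_x|a(x)|^{2}$ with essentially no effort; alternatively, expanding the dual sum directly and invoking the Montgomery--Vaughan Hilbert-type inequality for the kernel $\csc\pi(\theta_r-\theta_s)$ gives the clean constant $X + \delta^{-1}$. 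Squeezing out the extra $-1$ is exactly what forces the extremal-majorant argument.
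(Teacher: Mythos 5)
The paper never proves this proposition: it is quoted directly from Chapter 9.1 of Friedlander and Iwaniec \cite{opera-cribo}, so there is no internal argument to compare against, and your proposal must be judged on its own. What you give is the classical Selberg proof of the sharp constant: dualise, majorise $\mathbf{1}_{[1,X]}$ by the Beurling--Selberg extremal function $m$ of exponential type $2\pi\Delta$ with $\Delta<\delta$, apply Poisson summation so that the $\delta$-spacing kills every off-diagonal term $\widehat{m}(k-(\theta_r-\theta_s))$, and let $\Delta\uparrow\delta$. This is correct, and you rightly flag the existence of $m$ with $m\ge\mathbf{1}_{[1,X]}$, $\widehat{m}$ supported in $[-\Delta,\Delta]$ and $\int m=(X-1)+\Delta^{-1}$ as the one non-routine input; note that you only need a majorant \emph{achieving} this integral, not that it is extremal (Selberg's construction is in fact extremal only when $\Delta(X-1)$ is an integer, but that is irrelevant here). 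Two points would need a sentence each in a full write-up. First, the duality step should be stated in the direction you actually use (primal from dual); this is fine because the operator-norm identity for a matrix and its adjoint is symmetric, but as written you only verify the dual-from-primal direction. Second, ``Poisson summation is legitimate since $m|G|^2\in L^1$ has compactly supported Fourier transform'' is a little glib: the cleanest justification is to use the decay $0\le m(x)-\mathbf{1}_{[1,X]}(x)\ll\min\bigl(1,(\Delta\,\operatorname{dist}(x,[1,X]))^{-2}\bigr)$, so that $\sum_n m(n)<\infty$, then expand $|G(n)|^2=\sum_{r,s}b_r\overline{b_s}\,e((\theta_r-\theta_s)n)$, interchange the finite sum over $(r,s)$ with the absolutely convergent sum over $n$, and apply Poisson summation to $m$ alone at each frequency. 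With those caveats the argument is complete and gives exactly the constant $X-1+\delta^{-1}$ stated; the cruder alternatives you mention (Gallagher's lemma, or the Montgomery--Vaughan Hilbert-type inequality giving $X+\delta^{-1}$) would also suffice for every application in this paper, where only the order of magnitude of the constant matters.
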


Using the analytic large sieve inequality, we shall prove the following lemma.
\begin{lemma}[Lifting additive energy]\label{lift-lem} Suppose that $A \subset [X]$. Then we have
\[ \sum_{p \leq X^{1/2}} p \big( E_p(A,A) - \frac{|A|^4}{p} \big) \leq 3 X E(A,A).\]
\end{lemma}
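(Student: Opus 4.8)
The plan is to express everything in terms of exponential sums over $A$ and then invoke the analytic large sieve. First I would introduce, for $\theta \in \R/\Z$, the exponential sum $f(\theta) := \sum_{a \in A} e(a\theta)$, so that $|f(\theta)|^2 = \sum_{a_1,a_2 \in A} e((a_1-a_2)\theta)$ and hence, by orthogonality on $\Z/p\Z$,
\[ E_p(A,A) = \frac{1}{p} \sum_{j=0}^{p-1} \big| f(j/p) \big|^4 . \]
The $j=0$ term contributes $|A|^4/p$, so $p\big(E_p(A,A) - |A|^4/p\big) = \sum_{j=1}^{p-1} |f(j/p)|^4$. Summing over primes $p \leq X^{1/2}$, the left-hand side of the lemma becomes
\[ \sum_{p \leq X^{1/2}} \; \sum_{j=1}^{p-1} \big| f(j/p) \big|^4 . \]
The key point is that the fractions $j/p$ with $p \leq X^{1/2}$ prime and $1 \leq j \leq p-1$ are Farey-like points that are well-separated: distinct such fractions $j/p$ and $j'/p'$ differ by at least $1/(pp') \geq X^{-1}$, so they form a $\delta$-spaced set with $\delta = X^{-1}$ (minus the $0$ point, which we have already removed).

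Next I would write $|f(\theta)|^4 = |g(\theta)|^2$ where $g(\theta) := \sum_{a_1,a_2 \in A} e((a_1+a_2)\theta) = f(\theta)^2$ is an exponential sum supported on $\{2, \dots, 2X\}$, i.e. an interval of length $2X - 1$. Writing $g(\theta) = \sum_{2 \leq x \leq 2X} r(x) e(x\theta)$ where $r(x) = \#\{(a_1,a_2) \in A^2 : a_1 + a_2 = x\}$ is the additive representation function, I would apply the analytic large sieve inequality to the coefficients $(r(x))$ with the $\delta$-spaced points $\{j/p\}$ and $\delta = X^{-1}$. This gives
\[ \sum_{p \leq X^{1/2}} \sum_{j=1}^{p-1} \big| f(j/p) \big|^4 \; = \; \sum_{p \leq X^{1/2}} \sum_{j=1}^{p-1} \big| g(j/p) \big|^2 \; \leq \; \big( (2X - 1) - 1 + X \big) \sum_{2 \leq x \leq 2X} |r(x)|^2 . \]
The prefactor is $3X - 2 < 3X$, and $\sum_x |r(x)|^2$ is precisely the number of quadruples $(a_1,a_2,a_3,a_4) \in A^4$ with $a_1 + a_2 = a_3 + a_4$, i.e. $E(A,A)$. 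This yields exactly the claimed bound $\sum_{p \leq X^{1/2}} p\big(E_p(A,A) - |A|^4/p\big) \leq 3X E(A,A)$.

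The only genuinely delicate points are bookkeeping ones: checking the separation $\|j/p - j'/p'\| \geq X^{-1}$ for distinct Farey fractions with denominators at most $X^{1/2}$ (this is the standard fact that $|jp' - j'p| \geq 1$ when the fractions are unequal, combined with $pp' \leq X$), and confirming that the interval length appearing in the large sieve inequality for $g$ is $2X - 1$ so that $X - 1 + \delta^{-1} = 2X - 1 - 1 + X = 3X - 2$. I expect the separation verification to be the main (though minor) obstacle, since one must be careful that $j/p = j'/p'$ forces $p = p'$ and $j = j'$ in the relevant range, so that no two of our chosen points collide and the $\delta$-spacing genuinely holds with $\delta = X^{-1}$; everything else is a direct substitution into the cited analytic large sieve inequality.
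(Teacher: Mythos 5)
Your proposal is correct and is essentially the paper's own argument: the paper likewise writes $E_p(A,A)$ as $\frac{1}{p}\sum_{a \bmod p}|\sum_{x\le 2X} r(x)e(ax/p)|^2$ (your $g=f^2$), splits off the $a=0$ term contributing $|A|^4/p$, and applies the analytic large sieve to the $1/X$-spaced fractions $a/p$ with $p\le X^{1/2}$ prime to get the factor $3X$ times $\sum_x r(x)^2=E(A,A)$. Your bookkeeping of the spacing and of the constant $3X-2$ is accurate and in fact slightly more careful than the paper's statement.
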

\begin{proof} Write $r(x)$ for the number of representations of $x$ as $a_1 + a_2$ with $a_1,a_2 \in A$. Then 
\[ E(A,A) = \sum_{x \leq 2X} r(x)^2,\]
whilst
\[ E_p(A, A) = \sum_{\substack{x,x' \leq 2X \\ x \equiv x' \mdsub{p}}} r(x) r(x') = \frac{1}{p}\sum_{a \mdsub{p}} |\sum_{x \leq 2X} r(x) e(ax/p)|^2.\]
It follows that
\begin{align*} \sum_{p \leq X^{1/2}} p E_p(A,A) & = \sum_{p \leq X^{1/2}} \sum_{a \mdsub{p}} |\sum_{x \leq 2X} r(x) e(ax/p)|^2  \\ & = \sum_{p \leq X^{1/2}} \sum_{\substack{a \mdsub{p} \\ a \neq 0}} |\sum_{x \leq 2X} r(x) e(ax/p)|^2 + \sum_{p \leq X^{1/2}} p \frac{|A|^4}{p}.\end{align*}
Now the fractions $a/p$ are $1/X$-spaced, as $a, p$ range over all pairs with $p \leq X^{1/2}$ prime and $1 \leq a \leq p-1$. By the analytic form of the large sieve it follows that
\[ \sum_{p \leq X^{1/2}} \sum_{a \mdsub{p} a \neq 0} |\sum_{x \leq 2X} r(x) e(ax/p)|^2 \leq 3 X \sum_{x \leq 2X}  r(x)^2.\]
Putting all these facts together gives the result.\end{proof}

\begin{corollary}\label{cor1.3}
Let $\eta,\delta > 0$ be small real numbers with $\eta \leq \delta^2$. Suppose that $A \subset [X]$ is a set. Let $\mathscr{P}$ be a set of primes satisfying $36\delta^{-2} \leq p \leq X^{1/2}$, and suppose that the following are true whenever $p \in \mathscr{P}$:
\begin{enumerate}
\item $A \mdlem{p}$ lies in a set $S_p$ of cardinality at most $\frac{1}{2}(p+1)$;
\item $S_p$ has at least $(\frac{1}{16} + \delta) p^3$ additive quadruples;
\item $A$ has $\eta$-uniform fibres mod $p$, in the sense that $\sum_{a \mdsublem{p}} |A(a; p)|^2 \leq (2 + \eta) |A|^2/p$,
where $A(a;p)$ is the number of $x \in A$ with $x \equiv a \mdlem{p}$.
\end{enumerate}
Then $E(A,A) \geq \frac{\delta |A|^4}{3X}|\mathscr{P}|$.
\end{corollary}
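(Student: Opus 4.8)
The plan is to combine Lemma~\ref{lift-lem} (lifting additive energy) with the hypotheses in a pointwise fashion, prime by prime. For each $p \in \mathscr{P}$, I would first obtain a lower bound on $E_p(A,A)$ of the shape $E_p(A,A) \geq (1+c\delta)|A|^4/p$ for some absolute $c>0$, and then feed this into Lemma~\ref{lift-lem} to produce the claimed lower bound on $E(A,A)$. So the core of the argument is the local inequality: \emph{if $A \md p \subset S_p$, $S_p$ has $(\tfrac{1}{16}+\delta)p^3$ additive quadruples, $|S_p| \leq \tfrac12(p+1)$, and $A$ has $\eta$-uniform fibres mod $p$, then $E_p(A,A) \geq \tfrac{|A|^4}{p} + (\text{something like } \delta)\cdot \tfrac{|A|^4}{p}$.}

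To prove that local inequality I would work on the Fourier side over $\Z/p\Z$. Write $f(a)$ for the (normalised) fibre counting function $A(a;p)/|A|$, so that $\sum_a f(a) = 1$ and $f$ is supported on $S_p$. Then $E_p(A,A)/|A|^4 = \sum_{t}|\hat f(t)|^4$ (with a suitable normalisation of the Fourier transform), whose $t=0$ term is exactly $1/p$ after rescaling; the non-trivial part is $\sum_{t \neq 0}|\hat f(t)|^4$. The $\eta$-uniform fibres hypothesis says precisely that $\sum_a f(a)^2 \leq (2+\eta)/p$, i.e. $\sum_t |\hat f(t)|^2$ is close to its minimum $1/p$ (again up to the normalisation), so $f$ is ``flat'' in $L^2$ and hence close in $L^2$ to $\tfrac{1}{|S_p|}1_{S_p}$, the uniform measure on $S_p$. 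The point is that $\sum_t |\hat{(1_{S_p}/|S_p|)}(t)|^4$ is, up to normalisation, the additive-quadruple count of $S_p$ divided by $|S_p|^4$, which by hypothesis (ii) and $|S_p| \approx p/2$ is at least roughly $(\tfrac{1}{16}+\delta)p^3/(p/2)^4 = (1+16\delta)\cdot\tfrac{1}{p}$. So I would show that replacing $f$ by the uniform measure on $S_p$ perturbs the quantity $\sum_t|\hat f(t)|^4$ by at most $O(\sqrt{\eta})$ (using Cauchy--Schwarz/H\"older together with the $L^2$-closeness and the uniform $L^\infty$ bound $\|f\|_\infty \leq 1/|A|$... actually here one must be a little careful and use that the relevant Fourier coefficients are bounded, or argue via the $L^2$ difference directly), and conclude $E_p(A,A)/|A|^4 \geq (1 + 16\delta - O(\sqrt\eta))/p$. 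Since $\eta \leq \delta^2$, the error $O(\sqrt\eta) = O(\delta)$ is dominated (after adjusting constants) and one gets $E_p(A,A) - |A|^4/p \geq c\delta |A|^4/p$; tracking constants carefully should give the clean factor $\delta$ claimed (the lower bound $p \geq 36\delta^{-2}$ is presumably what makes the $\approx$ in $|S_p| \approx p/2$ harmless, since $|S_p| = (p+1)/2$ costs a relative factor $1 + O(1/p) = 1 + O(\delta^2)$).

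Having the local bound $E_p(A,A) \geq (1+\delta)|A|^4/p$ (say, after absorbing the absolute constant into the statement) for every $p \in \mathscr P$, I would simply apply Lemma~\ref{lift-lem}:
\[ 3X\, E(A,A) \;\geq\; \sum_{p \leq X^{1/2}} p\Big(E_p(A,A) - \tfrac{|A|^4}{p}\Big) \;\geq\; \sum_{p \in \mathscr P} p \cdot \delta\tfrac{|A|^4}{p} \;=\; \delta\,|A|^4\,|\mathscr P|, \]
where in the first inequality I am using that every summand on the left is non-negative (since $E_p(A,A) \geq |A|^4/p$ always) to discard the primes outside $\mathscr P$. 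Rearranging gives $E(A,A) \geq \tfrac{\delta|A|^4}{3X}|\mathscr P|$, as desired.

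The main obstacle is the local Fourier estimate, specifically making the perturbation argument quantitatively clean enough to land the constant exactly as stated. The subtlety is that $\sum_t |\hat f(t)|^4 - \sum_t |\hat g(t)|^4$ for $g = 1_{S_p}/|S_p|$ is not controlled by $\|f-g\|_2$ alone without some $L^\infty$ or $L^4$ input; I expect one resolves this either by a three-term-telescoping H\"older argument (bounding $|\|\hat f\|_4^4 - \|\hat g\|_4^4|$ by $\|\hat f - \hat g\|_4 (\|\hat f\|_4 + \|\hat g\|_4)^3$ and then controlling $\|\hat f - \hat g\|_4$ via interpolation between the $L^2$ bound and the trivial $L^\infty \leq L^1 \leq 1$ bound) or by directly expanding the quadruple count of $A \md p$ and comparing term by term with that of $S_p$, using uniformity to show the ``off-diagonal'' discrepancy is small. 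Either way the arithmetic is routine once the right inequality is identified; everything else (applying Lemma~\ref{lift-lem}, discarding non-negative terms) is immediate.
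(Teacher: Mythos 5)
Your skeleton is exactly the paper's: prove a local estimate $E_p(A,A) \geq (1+\delta)|A|^4/p$ for each $p \in \mathscr{P}$ by comparing the fibre-counting function $f$ with the uniform measure $g$ on $S_p$ in $\ell^4$ of the Fourier side, then feed this into Lemma \ref{lift-lem}, discarding the (non-negative) contributions of primes outside $\mathscr{P}$; the aggregation step and the bookkeeping with $\eta \leq \delta^2$ and $p \geq 36\delta^{-2}$ are as in the paper. The only point at issue is the step you yourself flag as the main obstacle, and there your most concrete suggestion does not close it. Interpolating $\Vert \hat f - \hat g \Vert_4^4 \leq \Vert \hat f - \hat g\Vert_\infty^2 \Vert \hat f - \hat g\Vert_2^2$ with the \emph{trivial} bound $\Vert \hat f - \hat g\Vert_\infty \leq \Vert f - g\Vert_{\ell^1} = O(1)$ only yields $\Vert \hat f - \hat g\Vert_4 = O\bigl((\eta + 1/p)^{1/4}\bigr) = O(\sqrt{\delta})$, whereas the gain you are trying to detect in $\Vert \hat g\Vert_4$ over the trivial value is only of size $\delta$; so under the stated hypothesis $\eta \leq \delta^2$ the error swamps the main term (that route would need $\eta \lesssim \delta^4$). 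Likewise the bound $\Vert f\Vert_\infty \leq 1/|A|$ you float is false in general (fibres can contain many elements of $A$), though you flagged that yourself.

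The fix is small but essential, and it is what the paper does: before applying Young's inequality, use Cauchy--Schwarz on the physical side to get $\Vert f - g\Vert_1 \leq \Vert f - g\Vert_2$ (with the normalised counting measure; in unnormalised language $\Vert f-g\Vert_{\ell^1} \leq \sqrt{p}\,\Vert f-g\Vert_{\ell^2}$), so that $\Vert \hat f - \hat g\Vert_\infty = O\bigl(\sqrt{\eta + 1/p}\bigr)$ rather than $O(1)$; then the interpolation gives $\Vert \hat f - \hat g\Vert_4 = O\bigl(\sqrt{\eta + 1/p}\bigr) = O(\delta)$, and the triangle inequality in $\ell^4$ together with $|S_p| = \frac{1}{2}(p+1)$, $\eta \leq \delta^2$, $p \geq 36\delta^{-2}$ lands $E_p(A,A) \geq (1+\delta)|A|^4/p$ exactly as needed for the clean constant. (In fact your hedge ``argue via the $L^2$ difference directly'' works even more simply: since on the frequency side the norms are counting-measure norms, $\Vert \hat f - \hat g\Vert_4 \leq \Vert \hat f - \hat g\Vert_2 = \Vert f - g\Vert_2$ by monotonicity of $\ell^p$ norms plus Parseval, which gives the same $O(\sqrt{\eta+1/p})$ bound with no interpolation at all.) With that one step repaired, your argument is the paper's argument.
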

\begin{proof}
Suppose that $p \in \mathscr{P}$. We will obtain a lower bound for $E_p(A,A)$ which beats the trivial bound of $E_p(A,A) \geq |A|^4/p$.  The corollary will then follow quickly from Lemma \ref{lift-lem}. First of all we apply the variance identity
\[ \sum_{m = 1}^M (t_m - \frac{1}{M}\sum_{i = 1}^M t_i)^2 = \sum_{i=1}^M t_i^2 - \frac{1}{M} (\sum_{i=1}^M t_i)^2\]
with $M := |S_p|$ and the $t_i$ being the $A(a;p)$ with $a \in S_p$. This and the uniform fibres assumption yields
\[ \sum_{a \in S_p} \big(|A(a;p)| - \frac{|A|}{|S_p|}\big)^2 \leq \frac{(2 + \eta) |A|^2}{p} - \frac{2|A|^2}{p+1} \leq \frac{|A|^2}{p}\big(\eta + \frac{2}{p}\big).\]
Write $f : \Z/p\Z \rightarrow \R$ for the function $f(a) := |A(a;p)|$, and $g : \Z/p\Z \rightarrow \R$ for the function which is $|A|/|S_p|$ on $S_p$ and zero elsewhere. We have shown\footnote{The normalisations here are the ones standard in additive combinatorics. Write $\Vert F \Vert_2 = (\frac{1}{p}\sum_{x \in \Z/p\Z} |F(x)|^2)^{1/2}$, but $\Vert \hat{F} \Vert_2 = (\sum_r |\hat{F}(r)|^2)^{1/2}$ on the Fourier side. Then we have the Parseval identity $\Vert F \Vert_2 = \Vert \hat{F} \Vert_2$ and Young's inequality $\Vert \hat{F} \Vert_{\infty} \leq \Vert F \Vert_1$, both of which are used here.} that
\[ \Vert f - g \Vert_2 \leq \frac{|A|}{p} \sqrt{\eta + \frac{2}{p}}.\]
Note also that, by the Cauchy--Schwarz inequality,
\[ \Vert f - g \Vert_1 \leq \Vert f - g \Vert_2 \leq \frac{|A|}{p} \sqrt{\eta + \frac{2}{p}} ,\]
and hence
\[ \Vert \hat{f} - \hat{g} \Vert_4^4 \leq \Vert \hat{f} - \hat{g} \Vert_{\infty}^{2} \Vert \hat{f} - \hat{g} \Vert_2^{2} \leq \Vert f - g \Vert_1^2 \Vert f - g \Vert_2^2 \leq \frac{|A|^4}{p^4}(\eta + \frac{2}{p})^{2},\] which of course implies that
\[ \Vert \hat{f} - \hat{g} \Vert_4 \leq \frac{|A|}{p} (\eta^{1/2} + \sqrt{\frac{2}{p}}),\]
where $\hat{f}(r) := \frac{1}{p}\sum_{a \in \Z/p\Z} f(a)e(-ar/p)$, similarly for $\hat{g}$. It follows that
\[ \Vert \hat{f} \Vert_4 \geq \Vert \hat{g} \Vert_4 - \frac{\eta^{1/2}|A|}{p} - \frac{\sqrt{2}|A|}{p^{3/2}}.\]

Note, however, that
\[ E_p(A,A) = \sum_{a_1 + a_2 = a_3 + a_4} f(a_1)f(a_2) f(a_3) f(a_4) = p^3 \Vert \hat{f} \Vert_4^4,\]
whilst
\[ \Vert \hat{g} \Vert_4^4 = \frac{|A|^4}{|S_p|^4} \frac{E_p(S_p, S_p)}{p^{3}} \geq \frac{|A|^4}{16|S_p|^4} (1 + 16\delta)\] and so
\[ \Vert \hat{g} \Vert_4 \geq \frac{|A|}{2|S_p|}(1 + 2\delta) \geq \frac{|A|}{p+1}(1 + 2\delta).\]
Putting these facts together, and remembering that $\eta \leq \delta^{2}$ and $p \geq 36\delta^{-2}$, yields
\[ \Vert \hat{f} \Vert_4 \geq \frac{|A|}{p}(1 + 2\delta - \eta^{1/2} - 3/p^{1/2}) \geq \frac{|A|}{p}(1 + \delta/2)\] and so $E_p(A,A) \geq \frac{|A|^4}{p}(1 + \delta)$ whenever $p \in \mathscr{P}$.
The result now follows immediately from Lemma \ref{lift-lem}.
\end{proof}

\begin{corollary}\label{cor3.3}
Let $\kappa > 0$ be a small parameter. Suppose that $A \subset [X]$ and that, for every prime $p \leq X^{1/2}$, the set $A \mdlem{p}$ lies in a set $S_p$ of cardinality at most $\frac{1}{2}(p+1)$ and with at least $(\frac{1}{16} + \delta) p^3$ additive quadruples. Then either $|A| \ll X^{1/2 - \kappa}$ or else $E(A,A) \geq \delta X^{-9 \kappa/\delta^2} |A|^3$.
\end{corollary}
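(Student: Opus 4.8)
The plan is to feed the conclusion of Lemma~\ref{lem1.1} into Corollary~\ref{cor1.3}. Corollary~\ref{cor1.3} is designed to take a set $\mathscr{P}$ of primes in $[36\delta^{-2},X^{1/2}]$ that have $\eta$-uniform fibres (and whose $S_p$ are additively structured, which is our standing hypothesis) and output $E(A,A)\ge\frac{\delta|A|^4}{3X}|\mathscr{P}|$, while Lemma~\ref{lem1.1} is designed to produce such primes, at least when $|A|$ is not already small. So I would first set $\eta:=\delta^2$, so that the constraint $\eta\le\delta^2$ of Corollary~\ref{cor1.3} holds, and apply Lemma~\ref{lem1.1} with parameters $\kappa$ and this $\eta$; if its first option holds then $|A|\le X^{1/2-\kappa}$ and we are done, with bounded $X$ absorbed into the implied constant in $\ll X^{1/2-\kappa}$. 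I would also dispose, up front, of the range $\kappa\ge\delta^2/6$: there $9\kappa/\delta^2-\kappa-\frac{1}{2}\ge 1-\delta^2/6>0$, so if $|A|>X^{1/2-\kappa}$ the trivial bound $E(A,A)\ge|A|^4/(2X)\ge\frac{1}{2}|A|^3X^{-1/2-\kappa}$ already exceeds $\delta X^{-9\kappa/\delta^2}|A|^3$ once $X$ is large, and if $|A|\le X^{1/2-\kappa}$ we are in the first option. So we may assume $\kappa<\delta^2/6$ and that the second option of Lemma~\ref{lem1.1} holds: the primes $p\le X^{1/2}$ not having $\eta$-uniform fibres carry logarithmic mass at most $\frac{3\kappa}{\delta^2}\log X+O_\delta(1)$.

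The one step that is not pure bookkeeping is converting this into an honest count of good primes, since it is $|\mathscr{P}|$, not the weighted sum $\sum_{p\in\mathscr{P}}\log p/p$, that Corollary~\ref{cor1.3} consumes. Let $\mathscr{P}$ be the set of primes $p$ with $36\delta^{-2}\le p\le X^{1/2}$ having $\eta$-uniform fibres. By Mertens' estimate $\sum_{p\le X^{1/2}}\log p/p=\frac{1}{2}\log X+O(1)$, by $\sum_{p<36\delta^{-2}}\log p/p=O_\delta(1)$, and by the bound just quoted on the bad primes, $\sum_{p\in\mathscr{P}}\log p/p\ge(\frac{1}{2}-\frac{3\kappa}{\delta^2})\log X-O_\delta(1)$, whose coefficient of $\log X$ is positive. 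To turn this into a count I would observe that logarithmic mass $c\log X$ cannot be carried by many fewer than $X^c$ primes: if $|\mathscr{P}|=N$, then, since $x\mapsto\log x/x$ is decreasing on $[36\delta^{-2},\infty)$, the sum over $\mathscr{P}$ is at most the same sum over the $N$ smallest primes exceeding $36\delta^{-2}$, which is $\log Y_N+O(1)$ where $Y_N\ll_\delta N\log N$ is the largest of them by the prime number theorem; hence $\log N+\log\log N+O_\delta(1)\ge(\frac{1}{2}-\frac{3\kappa}{\delta^2})\log X-O_\delta(1)$, and since $N\le X$ this yields $|\mathscr{P}|\gg_\delta X^{1/2-3\kappa/\delta^2}/\log X$. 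This is where I expect the (modest) difficulty to lie: the idea is simply that a large logarithmic mass cannot hide among a handful of small primes, the fact $\sum_{p\le Y}\log p/p=\log Y+O(1)$ being exactly what makes it precise, but one must be careful not to lose a constant factor in the exponent of $X$, since the final accounting has only the slack $\frac{6\kappa}{\delta^2}-\kappa$ to play with.

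Finally I would apply Corollary~\ref{cor1.3} to this $\mathscr{P}$: hypotheses (i) and (ii) are our hypotheses on the $S_p$, (iii) holds by the choice of $\mathscr{P}$, and the range and $\eta\le\delta^2$ are in place, so $E(A,A)\ge\frac{\delta|A|^4}{3X}|\mathscr{P}|\gg_\delta|A|^4X^{-1/2-3\kappa/\delta^2}/\log X$. In the remaining case $|A|>X^{1/2-\kappa}$ we have $|A|^4\ge|A|^3X^{1/2-\kappa}$, so $E(A,A)\gg_\delta|A|^3X^{-\kappa-3\kappa/\delta^2}/\log X$. Since $\delta\le1$ gives $\kappa+\frac{3\kappa}{\delta^2}<\frac{9\kappa}{\delta^2}$ with the fixed positive gap $\frac{6\kappa}{\delta^2}-\kappa$, the quantity $X^{-\kappa-3\kappa/\delta^2}/(X^{-9\kappa/\delta^2}\log X)=X^{6\kappa/\delta^2-\kappa}/\log X$ tends to infinity with $X$, so for $X$ large in terms of $\kappa$ and $\delta$ we obtain $E(A,A)\ge\delta X^{-9\kappa/\delta^2}|A|^3$, as claimed; bounded $X$ are once more covered by the first option and its implied constant.
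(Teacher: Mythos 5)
Your proof is correct and follows essentially the same route as the paper: choose $\eta=\delta^2$, apply Lemma \ref{lem1.1} to get a logarithmic-mass bound on the primes without $\eta$-uniform fibres, convert that into a count of good primes in $[36\delta^{-2},X^{1/2}]$, feed it into Corollary \ref{cor1.3}, and use $|A|\ge X^{1/2-\kappa}$ together with the slack between $\kappa+3\kappa/\delta^2$ and $9\kappa/\delta^2$ to absorb the remaining factors. Your mass-to-count step and your explicit treatment of the regimes $\kappa\ge\delta^2/6$ and of bounded $X$ are in fact more careful than the paper's one-line assertion that the mass bound ``implies $|\mathscr{P}|\ge X^{1/2-8\kappa/\eta}$'', but in substance it is the same argument.
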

\begin{proof}
Suppose that $|A| \geq X^{1/2 - \kappa}$ and that $\kappa \log X$ is large enough. (If $\kappa \log X$ is small then $|A| \ll X^{1/2} \ll X^{1/2 - \kappa}$ by the usual large sieve bound.) Set $\eta := \delta^2$. By Lemma \ref{lem1.1} we either have $|A| \leq X^{1/2 - \kappa}$ or else $A$ has $\eta$-uniform fibres on a set $\mathscr{P} \subset  [X^{1/2}]$ of primes satisfying
\[ \sum_{\substack{p \leq X^{1/2} \\ p \notin \mathscr{P}}} \frac{\log p}{p} \leq \frac{4\kappa \log X}{\eta}.\] This implies that $|\mathscr{P}| \geq X^{1/2 - 8\kappa/\eta}$, and so by Corollary \ref{cor1.3} and the fact that $|A| \geq X^{1/2 - \kappa}$ we have $E(A,A) \geq \delta X^{-9\kappa/\eta} |A|^3$, which gives the claimed bound.
\end{proof}

The main task for the rest of this section will be to prove the following. 

\begin{proposition}[Differenced larger sieve]\label{diff-largersieve}
Let $X$ be large, and let $A \subset [X]$ be a set with the property that $A \mdlem{p}$ lies in a set $S_p$ of size at most $\frac{1}{2}(p+1)$ for all primes $p \leq X^{1/2}$. Suppose that $E(A,A) \geq |A|^3/K$. Then $|A| \leq K X^{1/2 - c_0}$, where $c_0 > 0$ is an absolute constant.
\end{proposition}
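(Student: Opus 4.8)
The plan is to exploit the additive structure forced by the hypothesis $E(A,A) \geq |A|^3/K$ to locate a large ``generalized arithmetic progression'' (or more precisely, by Freiman-type machinery, a large piece of $A$ inside a proper GAP of bounded dimension and controlled size), and then to run the larger sieve against a translate/dilate of that progression. The point is that the $S_p$, having size only $(p+1)/2$, cannot simultaneously contain a long arithmetic progression in every residue (a length-$\ell$ progression reduces mod $p$, for $p$ not dividing the common difference, to a set of size $\min(\ell,p)$), so if $A$ — or a positive proportion of it — sits inside a structured set that reduces to small sets modulo many primes, the larger sieve (Theorem~\ref{larger-sieve}) gives a bound strictly better than $X^{1/2}$.

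Concretely, first I would apply the Balog--Szemer\'edi--Gowers theorem to the energy hypothesis to extract a subset $A' \subset A$ with $|A'| \gg |A|/K^{O(1)}$ and $|A' + A'| \ll K^{O(1)} |A'|$, then invoke Freiman's theorem (in the sharpened form with polynomial dependence on the doubling, due to Sanders/Schoen, or even just Freiman's original theorem since we only need \emph{some} dependence on $K$) to place $A'$ inside a proper GAP $P$ of rank $d = d(K) = O(1)$ and size $|P| \ll_K |A'|$. Since $A' \subset [X]$ and $|A'| \gg_K |A|$, if $|A|$ is close to $X^{1/2}$ then $P$ is a GAP of size $\approx X^{1/2}$ sitting inside $[X]$ (after translating to make it lie in an interval of length $X$). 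The key geometric input is then: a proper GAP of size $L$ and bounded rank, when reduced modulo a prime $p$ with $p^{1/d} \ll L^{1/d} \ll p$ (so $p$ is in a suitable range around $L$), occupies roughly $p$ residue classes — i.e. it is \emph{not} confined to a set of size $(p+1)/2$ — for a positive proportion of such primes $p$. This is the place where one must be careful: one needs to rule out the GAP's step-sizes being divisible by $p$, and one needs a counting argument (essentially equidistribution of a low-rank GAP modulo $p$, via a Fourier/Gauss-sum estimate or a direct box argument) showing $|P \bmod p| \geq (1-c)p$ for $\gg 1$ proportion of primes $p \in [Z, 2Z]$ for $Z$ around $L$.

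Given that, I would run the larger sieve with $\A'_p = A'$ (or $A' \cap P$, noting $A' \subset P$), $\mathscr{P}$ the good primes near $L \approx |A|$, and $\sigma_p \leq 1/2$ on the relevant primes (since $A' \md p \subset S_p$ has $\sigma_p \leq (p+1)/(2p)$), but crucially using the \emph{larger}-sieve philosophy on the structured set $P$: because $|P \bmod p|$ is large we instead get to localize $A'$ inside the small set $S_p$, a genuine "minority of residues" situation, and summing $\sum_{p \in \mathscr{P}} \frac{\log p}{\sigma_p p}$ over a positive-density set of primes up to $Q \approx |A|^{2}$ beats $\log X$. More carefully: apply Theorem~\ref{larger-sieve} with $Q$ slightly larger than $|A|$ raised to an appropriate power; the denominator $\delta^2 \sum \frac{\log p}{\sigma_p p} - \log X$ becomes $(2 - o(1))\log Q \cdot (\text{density}) - \log X$, which is bounded below by $c_0 \log X$ once $|A| \geq X^{1/2 - c_0}$ fails, yielding $|A'| \ll K^{O(1)} X^{1/2 - c_0}$ and hence $|A| \ll K^{O(1)} X^{1/2 - c_0}$; absorbing the $K^{O(1)}$ into $K$ (or tracking it honestly) gives $|A| \leq K X^{1/2 - c_0}$ after adjusting $c_0$.

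The main obstacle I anticipate is the geometric/equidistribution step: controlling $|P \bmod p|$ for a proper GAP of bounded rank and, in particular, showing that for a positive proportion of primes $p$ near the size of $P$ the reduction is \emph{not} trapped in a half-size set $S_p$. One must handle the possibility that many step-sizes of $P$ share common factors with many primes (which would make $P \bmod p$ small), and one must make the "positive proportion of primes" quantitative enough that the larger-sieve sum genuinely exceeds $\log X$; this is where the rank being $O_K(1)$ rather than absolutely bounded could cost factors, and where one may prefer to iterate or to use the one-dimensional structure as much as possible rather than the full Freiman GAP. A secondary nuisance is bookkeeping the $K$-dependence through Balog--Szemer\'edi--Gowers and Freiman so that the final bound is clean in $K$; using the polynomial Freiman--Ruzsa-type results keeps this manageable, but one should be prepared instead to prove a slightly weaker statement (e.g. $|A| \leq K^{C} X^{1/2-c_0}$) and note it suffices for the applications.
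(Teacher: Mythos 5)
There is a genuine gap here, and it is in the heart of the argument rather than in the bookkeeping. The decisive problem is the sieve step. Knowing that $A'$ lies in a Freiman GAP $P$ whose reduction mod $p$ is \emph{large} gives no improvement at all: the only congruence information you have about $A'$ is still $A' \bmod p \subset S_p$ with $|S_p| \approx \frac{1}{2}p$, and the intersection $S_p \cap (P \bmod p)$ can perfectly well have size $\approx \frac{1}{2}p$, so the larger sieve runs with $\sigma_p \approx \frac{1}{2}$ and, over the primes $p \leq X^{1/2}$ that the hypothesis actually supplies, returns only $X^{1/2+o(1)}$ --- which is exactly the obstruction the proposition is meant to overcome. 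Your claim that $\sum_p \frac{\log p}{\sigma_p p}$ ``beats $\log X$'' rests on taking $Q \approx |A|^2 \approx X$, i.e.\ on primes up to $X$, which are not provided by the hypothesis (it gives $S_p$ only for $p \leq X^{1/2}$); and even granting such primes, the larger sieve bound $Q/(2\log Q - \log X)$ with $\sigma_p \approx \frac{1}{2}$ never yields a power saving. To win one must exhibit, for many primes, a containing set of density strictly below $\frac{1}{2}-c$, and largeness of $P \bmod p$ points in the wrong direction. The underlying assertion you would actually need --- that a dense subset of a low-rank progression cannot have all its reductions trapped in half-size sets --- is essentially Theorem \ref{prog-thm} (sieving by arithmetic progressions), which the paper proves \emph{using} the machinery of this very proposition, so the logical dependence runs the other way.

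A second, independent obstruction is the $K$-dependence. In the intended application (Theorem \ref{thm1.4} via Corollary \ref{cor3.3}) one has $K = \delta^{-1}X^{9\kappa/\delta^2}$, a fixed power of $X$; consequently Balog--Szemer\'edi--Gowers followed by any known Freiman-type theorem produces a GAP whose rank grows with $X$ and whose size losses are super-polynomial in $K$ (polynomial Freiman--Ruzsa over $\Z$ is not available), so both the ``bounded rank'' equidistribution step and any bound of the shape $|A| \leq K^{O(1)}X^{1/2-c_0}$ with absolute $c_0$ are out of reach by this route. The paper's argument is much lighter and loses only the factor $K$ linearly: from the energy hypothesis one extracts (Lemma \ref{lem3.5}) a set $H$ of at least $|A|/2K$ popular differences with $|A \cap (A+h)| \geq |A|/2K$; if for some $h \in H$ the densities $|S_p \cap (S_p+h)|/p$ average below $\frac{1}{2}-c$, the larger sieve applied to $A \cap (A+h) \subset S_p \cap (S_p+h) \pmod p$ gives the saving (Lemma \ref{lem2.7}); otherwise these intersections have density $\approx \frac{1}{2}$ for most $p$ and most $h$, and Pollard's theorem (Lemma \ref{add-comb}) then forces most of $H$ to reduce into a set of density below $\frac{1}{3}$ modulo many primes, so the larger sieve applied to $H$ itself gives $|H| \ll X^{1/2-c/2+o(1)}$, a contradiction (Lemma \ref{lem2.8}). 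If you want to salvage your approach, the place to look is not equidistribution of a Freiman GAP but rather a mechanism, like the paper's differencing, that converts additive structure of $A$ into \emph{shrinking} of the permitted residue sets.
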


Let us pause to see how this and Corollary \ref{cor3.3} combine to establish Theorem \ref{thm1.4}.

\begin{proof}[Proof of Theorem \ref{thm1.4} given Proposition \ref{diff-largersieve}]
Let $\kappa > 0$ be a parameter to be specified shortly. Suppose that $A \subset [X]$, and that $A \mdlem{p} \subset S_p$ for all primes $p$. Suppose furthermore that $|S_p| = \frac{1}{2}(p+1)$ and that $S_p$ has at least $(\frac{1}{16} + \delta) p^3$ additive quadruples for all $p$.  By Corollary \ref{cor3.3} we see that either $|A| \ll X^{1/2 - \kappa}$, or else $E(A,A) \geq \delta X^{-9\kappa/\delta^2} |A|^3$. In this second case it follows from Proposition \ref{diff-largersieve} that $|A| \ll_{\delta}  X^{\frac{1}{2} + 9 \kappa/\delta^2 - c_0}$.
Choosing $\kappa := c_0\delta^2/10$ gives the result.
\end{proof}

It remains to prove Proposition \ref{diff-largersieve}. As the reader will soon see, the proof might be thought of as a ``differenced larger sieve'' argument, in which the larger sieve is not applied to $A$ directly, but rather to intersections of shifted copies of $A$ (as in Lemma \ref{lem2.7}) and to a set $H$ of pairwise differences of elements of $A$ (as in Lemma \ref{lem2.8}). The assumption that $A$ has large additive energy allows one to recover bounds on $A$ from that information (as in Lemma \ref{lem3.5}).

\emph{Remark.} It is possible to prove Proposition \ref{diff-largersieve} with a quite respectable value of the constant $c_{0}$. Unfortunately the quality of the final bound in Theorem \ref{thm1.4} is not really determined by the value of $c_{0}$, but by the much poorer bounds that we achieved when trying to force the set $A$ to have uniform fibres mod $p$. We believe that by reworking Corollary \ref{cor1.3} a little one could prove Theorem \ref{thm1.4} with an improved bound $|\A[X]| \ll X^{1/2 - c\sqrt{\delta}}$, but this is presumably very far from optimal.

\begin{proof}[Proof of Proposition \ref{diff-largersieve}]
The argument is a little involved, so we begin with a sketch. Suppose that $E(A,A) \approx |A|^3$. Then it is not hard to show that $|A \cap (A + h)| \approx |A|$ for $h \in H$, where $|H| \approx |A|$. Modulo $p$, the set $A \cap (A + h)$ is contained in $S_p \cap (S_p + h)$. If, for some $h \in H$, we have $|S_p \cap (S_p + h)| < (\frac{1}{2} - c)p$ then an application of the larger sieve implies that $|A \cap (A + h)| < X^{1/2 - c'}$, and hence $|A| \lessapprox X^{1/2 - c'}$. The alternative is that $|S_p \cap (S_p + h)| \approx \frac{1}{2}p$ for many $p$, for all $h \in H$. Using this we can show that there is \emph{some} $p$ for which $|S_p \cap (S_p + h)| \approx \frac{1}{2}p$ for many $h$. By a result of Pollard, there is no such set $S_p$.

Let us turn now to the details, formulating a number of lemmas which correspond to the above heuristic discussion. From now on, the assumptions are as in Proposition \ref{diff-largersieve}.

\begin{lemma}\label{lem3.5} Then there is a set $H \subset [-X, X]$, $|H| \geq |A|/2K$ such that $|A \cap (A + h)| \geq |A|/2K$ for all $h \in H$.
\end{lemma}
\begin{proof}
This is completely standard additive combinatorics and is a consequence, for example, of the inequalities in \cite[\S 2.6]{tao-vu}. It is no trouble to give a self-contained proof: note that $E(A,A) = \sum_x |A \cap (A + x)|^2$ and that we have the trivial bound $|A \cap (A + x)| \leq |A|$ for all $x$. If $H$ is the maximal set with the stated property then
\[ E(A, A)  = \sum_{x \in H} |A \cap (A + x)|^2 + \sum_{x \notin  H} |A \cap (A + x)|^2  \leq |H| |A|^2 + \frac{|A|}{2K}\sum_{x} |A \cap (A + x)|  = |H| |A|^2 + \frac{|A|^3}{2K},\]
from which the statement follows immediately.
\end{proof}

\begin{lemma}\label{lem2.7} Let $c > 0$ be a small constant. Set $Q := X^{1/2 - c/2}$, and suppose that there is some $h \in H$ such that 
\[ \sum_{p \leq Q} \frac{\log p}{p} \frac{|S_p \cap (S_p + h)|}{p} < (\frac{1}{2} - c) \log Q.\] Then $|A| \ll K X^{1/2 - c/4}$.
\end{lemma}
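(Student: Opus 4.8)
The plan is to apply the larger sieve (Theorem \ref{larger-sieve}) to the set $\A \cap (\A + h)$ for the distinguished value $h \in H$, using the hypothesis that its reductions modulo small primes live in the small sets $S_p \cap (S_p + h)$. First I would fix $h \in H$ as in the statement, and set $B := A \cap (A + h)$, so that $B \mdlem{p} \subset S_p \cap (S_p + h) =: T_p$ for every prime $p \leq Q$, and $\sigma_p := |T_p|/p$. The hypothesis of the lemma is precisely that $\sum_{p \leq Q} \frac{\log p}{p} \sigma_p < (\tfrac12 - c)\log Q$; since $\sigma_p \leq \tfrac12 + O(1/p)$ anyway, what really matters is that the weighted average of $\sigma_p$ is bounded below $\tfrac12$. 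I would then feed the trivial quantity $\frac{\log p}{\sigma_p p}$ into the larger sieve. The subtlety is that the larger sieve wants a lower bound on $\sum_p \frac{\log p}{\sigma_p p}$, whereas the hypothesis gives an upper bound on $\sum_p \frac{\log p}{p}\sigma_p$; these are related by convexity (Jensen / Cauchy–Schwarz), since $\frac{1}{\sigma_p}$ is convex in $\sigma_p$. Concretely, by Cauchy–Schwarz,
\[
\Bigl(\sum_{p \leq Q} \frac{\log p}{p}\Bigr)^2 \leq \Bigl(\sum_{p \leq Q}\frac{\log p}{\sigma_p p}\Bigr)\Bigl(\sum_{p \leq Q}\frac{\log p}{p}\sigma_p\Bigr),
\]
so with $\sum_{p \leq Q}\frac{\log p}{p} = \log Q + O(1)$ and the hypothesised bound on the last factor, one gets $\sum_{p \leq Q}\frac{\log p}{\sigma_p p} \geq \frac{(\log Q + O(1))^2}{(\frac12 - c)\log Q} \geq (2 + c')\log Q + O(1)$ for some $c' = c'(c) > 0$ (roughly $c' \asymp c$).

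Next I would apply Theorem \ref{larger-sieve} with $\mathscr{P}$ the set of all primes, $Q := X^{1/2 - c/2}$, the set "$\A$" of that theorem taken to be $B = A \cap (A+h)$, and $\A'_p = B$ itself (so $\delta = 1$ there). Since $Q = X^{1/2 - c/2}$ we have $\log X = (1 + c)\log Q + O(1)$ — here I would be slightly careful with the arithmetic, $\log X = \frac{\log Q}{1/2 - c/2} = (2 + 2c + O(c^2))\log Q$ — so the denominator in the larger sieve is
\[
\sum_{p \leq Q}\frac{\log p}{\sigma_p p} - \log X \geq (2 + c')\log Q - (2 + 2c + O(c^2))\log Q + O(1),
\]
which is $\geq (c'' )\log Q$ for a suitable $c'' > 0$ provided $c$ is small enough that $c' > 2c$; this is where one needs the constant in "$(\frac12 - c)$" to buy genuine room, and it is the one place where the numerology has to be checked honestly. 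With a positive denominator of size $\gg_c \log Q \gg \log X$, Theorem \ref{larger-sieve} gives $|B| \ll \frac{Q}{\log Q} \ll Q = X^{1/2 - c/2} \ll X^{1/2 - c/4}$.

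Finally I would transfer this back to $A$: since $h \in H$, Lemma \ref{lem3.5} gives $|A \cap (A+h)| \geq |A|/2K$, i.e. $|A| \leq 2K|B| \ll K X^{1/2 - c/4}$, which is the claim. The main obstacle, as indicated, is purely the bookkeeping of constants: one must verify that the convexity gain $c'$ from Cauchy–Schwarz strictly exceeds the loss $2c$ incurred by choosing $Q = X^{1/2 - c/2}$ rather than $X^{1/2}$, so that the larger-sieve denominator stays positive and of order $\log X$. There is no analytic difficulty beyond the standard Mertens estimate $\sum_{p \leq Q}\frac{\log p}{p} = \log Q + O(1)$ and the elementary fact $\sigma_p = \frac{|S_p \cap (S_p+h)|}{p} \leq \frac{|S_p|}{p} \leq \frac12 + \frac{1}{2p}$, the latter only being needed to know $\sigma_p$ is bounded so that $1/\sigma_p$ is not pathologically large for the few smallest primes (which can in any case be discarded into the $O(1)$).
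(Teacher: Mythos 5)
Your proof is correct and follows essentially the same route as the paper: apply the larger sieve (Theorem \ref{larger-sieve}) with $\delta = 1$ to $A \cap (A+h)$, convert the hypothesis into the lower bound $\sum_{p \leq Q} \frac{\log p}{\sigma_p p} \geq (2+4c)\log Q + O(1)$, observe this beats $\log X = \frac{2}{1-c}\log Q = (2+2c+O(c^2))\log Q$ so the denominator is $\gg c\log Q$, and finish with $|A \cap (A+h)| \geq |A|/2K$. The only cosmetic difference is that the paper does the conversion via the pointwise inequality $4t + 1/t \geq 4$ rather than your Cauchy--Schwarz on the sums; both give the same constant, and the numerological point you flag resolves exactly as you suspect, since $\frac{1}{1/2-c} \geq 2+4c$ gives $c' = 4c > 2c$.
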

\begin{proof}
Note that $A \cap (A + h) \subseteq S_p \cap (S_p + h)$ for all $p$. Write $\sigma_p := |S_p \cap (S_p + h)|/p$ and apply the larger sieve, Theorem \ref{larger-sieve}, with $\delta = 1$ and $A$ replaced by $A \cap (A + h)$. We obtain the bound

\begin{equation}\label{eq77denom} |A \cap (A + h)| \ll \frac{Q}{\sum_{p \leq Q} \frac{\log p}{p\sigma_p} - \log X},\end{equation}
provided that the denominator is positive.

Our assumption is that 
\[ \sum_{p \leq Q} \frac{\sigma_p \log p}{p} \leq (\frac{1}{2} - c) \log Q.\] Since $4t + 1/t \geq 4$ for all $t > 0$, it follows that 
\[ \sum_{p \leq Q} \frac{\log p}{\sigma_p p} \geq 4\log Q + O(1) - 4(\frac{1}{2} - c)\log Q = (2 + 4c)\log Q + O(1).\] 
It is easy to check that the denominator of \eqref{eq77denom} is indeed positive, since $Q = X^{1/2 - c/2}$. We obtain the bound
\[ |A \cap (A + h)| \ll X^{1/2 - c/2 + o(1)} \ll X^{1/2 - c/4}.\]
Since $|A \cap (A + h)| \geq |A|/2K$, the lemma follows.
\end{proof}

Before stating the next lemma, let us isolate a fact which will be needed in the proof. This is basically due to Pollard.
\begin{lemma}[Pollard]\label{add-comb}
Let $\eps > 0$ be small, and let $S \subset \Z/p\Z$ be a non-empty set such that $|S| < (1-2\eps)p$. Then there are at most $4\eps |S| + 1$ values of $h \in \Z/p\Z$ such that $|S \cap (S + h)| \geq (1 - \eps)|S|$.
\end{lemma}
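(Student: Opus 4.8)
The plan is to exploit Pollard's generalisation of the Cauchy–Davenport theorem, which controls how often the convolution $1_S * 1_{-S}$ can be large. Recall Pollard's theorem: for non-empty $S, T \subset \Z/p\Z$ and any integer $1 \leq t \leq \min(|S|,|T|)$, one has $\sum_{h} \min\bigl(t, (1_S * 1_T)(h)\bigr) \geq t\bigl(\min(|S|+|T|-t, p)\bigr)$. I would apply this with $T = -S$, so that $(1_S * 1_{-S})(h) = |S \cap (S+h)|$, and with $t$ chosen to be (roughly) $\lceil (1-\eps)|S| \rceil$, or a convenient integer comparable to it. Then the left-hand side of Pollard's inequality is $\sum_h \min(t, |S\cap(S+h)|)$, while the right-hand side is $t \min(2|S|-t, p)$.

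Next I would split the sum over $h$ according to whether $|S \cap (S+h)| \geq (1-\eps)|S|$ or not. Let $N$ be the number of $h$ with $|S\cap(S+h)| \geq (1-\eps)|S|$; call these the ``bad'' shifts. For the remaining $h$ we use the trivial bound $\min(t, |S\cap(S+h)|) \leq |S\cap(S+h)|$, and then $\sum_h |S\cap(S+h)| = |S|^2$. For the bad shifts we bound $\min(t,\cdot) \leq t$. This yields
\[
t\,\min(2|S|-t,\, p) \;\leq\; Nt \;+\; \sum_{h\text{ not bad}} |S \cap (S+h)| \;\leq\; Nt + |S|^2,
\]
so that $N \geq \min(2|S|-t, p) - |S|^2/t$. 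Wait — this gives a \emph{lower} bound on $N$, the wrong direction; so instead I would run the argument to bound the ``good'' contribution from below and isolate $N$ from above. Concretely: writing $M := 2|S|-t$ and noting $M \leq p$ follows from the hypothesis $|S| < (1-2\eps)p$ once $t$ is chosen appropriately, Pollard gives $\sum_h \min(t, |S\cap(S+h)|) \geq tM$. On the other hand this sum is at most $Nt + \sum_{h : |S\cap(S+h)| < (1-\eps)|S|}|S\cap(S+h)|$, and the second sum is at most $|S|^2 - N\cdot 0$... the point is that one instead compares against the \emph{full} second moment: $\sum_h |S\cap(S+h)| = |S|^2$, giving $tM \leq Nt + (|S|^2 - N(1-\eps)|S|)$, i.e. $N\bigl(t - (1-\eps)|S|\bigr) \geq tM - |S|^2$. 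Choosing $t$ slightly \emph{below} $(1-\eps)|S|$ flips the coefficient of $N$ negative, turning this into the desired upper bound $N \leq \bigl(|S|^2 - tM\bigr)/\bigl((1-\eps)|S| - t\bigr)$, and a short computation with $t \approx (1-\eps)|S| - 1$ and $M = 2|S| - t$ reduces the right-hand side to $4\eps|S| + O(1)$, matching the claimed $4\eps|S| + 1$.

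The main obstacle is the bookkeeping around the choice of the integer parameter $t$: Pollard's theorem requires $t$ to be a positive integer with $t \le |S|$, and the bound $4\eps|S|+1$ is tight enough that one cannot be cavalier about floors and ceilings or about the $O(1)$ losses. I would choose $t := \max(1, \lceil (1-\eps)|S|\rceil - 1)$ (treating the degenerate small-$|S|$ cases separately, where the bound $4\eps|S|+1 \geq 1$ is automatic since there is always the trivial shift $h=0$), verify $2|S| - t \leq p$ from $|S| < (1-2\eps)p < (1-\eps)p$, and then carefully track constants to land on exactly $4\eps|S|+1$. The additive-combinatorial input (Pollard's theorem) is classical and may be quoted; everything else is elementary manipulation.
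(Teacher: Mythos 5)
Your overall strategy --- apply Pollard's theorem to $1_S * 1_{-S}$ at a single threshold $t$ and compare against the count of bad shifts --- is essentially the paper's argument (the paper quotes Pollard in the equivalent level-set form $N_1+\dots+N_r \geq r(2|S|-r)$, which is exactly your $\sum_h \min(r,\cdot)$ inequality), and it can be made to work. But the specific computation you claim does not. With your choice $t \approx (1-\eps)|S|-1$ the bound you derive is
\[
N \;\leq\; \frac{(|S|-t)^2}{(1-\eps)|S|-t} \;\approx\; \frac{(\eps|S|+1)^2}{1},
\]
i.e.\ of order $\eps^2|S|^2$, not $4\eps|S|+O(1)$: the ``short computation'' you invoke is false, and a quadratic bound is useless where the lemma is applied (Lemma \ref{lem2.8} needs the bad shifts to occupy $O(\eps p)$ classes, with $\eps$ there equal to $c^{1/4}$). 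The optimization goes the other way: you want the denominator $(1-\eps)|S|-t$ to be of size about $\eps|S|$, not about $1$. Take $t = |S| - 2\lfloor \eps|S|\rfloor$, and for bad shifts use the integrality refinement $|S\cap(S+h)| \geq \lceil (1-\eps)|S|\rceil = |S|-\lfloor\eps|S|\rfloor$; then your inequality becomes $N\lfloor\eps|S|\rfloor \leq (|S|-t)^2 = 4\lfloor\eps|S|\rfloor^2$, hence $N \leq 4\lfloor\eps|S|\rfloor \leq 4\eps|S|$, which is precisely the paper's computation with $R=|S|-2\lfloor\eps|S|\rfloor$ and $U=|S|-\lfloor\eps|S|\rfloor$. (Your check that $2|S|-t\leq p$ still goes through for this $t$, since $|S|+2\lfloor\eps|S|\rfloor \leq (1+2\eps)|S| < (1+2\eps)(1-2\eps)p < p$.)

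Separately, your handling of the degenerate case is not an argument: noting that $h=0$ is always a bad shift gives a \emph{lower} bound on the count, not the required upper bound. The case that genuinely needs separate treatment is $\lfloor\eps|S|\rfloor=0$, i.e.\ $|S|<1/\eps$, where the division above is illegal. There one argues as the paper does: $|S\cap(S+h)|\geq(1-\eps)|S| > |S|-1$ forces $S\cap(S+h)=S$, hence $S+h=S$ and so $S+nh=S$ for every $n$; since $S$ is a nonempty proper subset of $\Z/p\Z$, this forces $h=0$, so there is exactly one bad shift. With these two repairs your proposal coincides with the paper's proof.
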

\begin{proof}
This follows quickly from a well-known result of Pollard \cite{pollard}. Writing $N_i$ for the number of $h$ such that $|S \cap (S + h)| \geq i$, Pollard's result in our setting implies that $N_1 + \dots + N_r \geq r(2|S|-r)$ for all $2|S| - p \leq r \leq |S|$.  Temporarily write $H$ for the set of all $h \in \Z/p\Z$ such that $|S \cap (S + h)| \geq (1 - \eps)|S|$, and also let $R := |S| - 2\lfloor \eps|S| \rfloor$ and $U := |S| - \lfloor \eps|S| \rfloor$, where $\lfloor \cdot \rfloor$ denotes the floor function. Then 
\[ N_{R+1} + \dots + N_{|S|} \geq N_{R+1} + \dots + N_{U} \geq |H|(U - R) = |H| \lfloor \eps|S| \rfloor .\]
Pollard's result tells us that
\[ N_1 + \dots + N_{R} \geq R(2|S|-R) = |S|^{2} - 4\lfloor \eps|S| \rfloor^{2} .\]
On the other hand we trivially have
\[ N_1 + \dots + N_{|S|} = |S|^2.\]
Combining all these facts leads to the result provided that $|S| \geq 1/\epsilon$.

Alternatively, if $|S| < 1/\epsilon$ then $|S \cap (S + h)| \geq (1 - \eps)|S|$ only if $S \cap (S + h) = S$, in which case $S \cap (S + nh) = S$ for every $n$. Since $S$ is a proper subset of $\Z/p\Z$, this can only happen when $h=0$. 
\end{proof}

We will also require a simple and standard averaging principle, the proof of which we include here for completeness.

\begin{lemma}\label{simple-avg}
Let $\eps, \eps'$ be real numbers with $0 < \eps \leq \eps'$. Let $X$ be a finite set, let $(\lambda(x))_{x \in X}$ be nonnegative weights,  and suppose that $f : X \rightarrow [0,1]$ is a function such that $\sum_{x \in X} \lambda(x) f(x) \geq (1 - \eps) \sum_{x \in X} \lambda(x)$.

Let $X' \subset X$ be the set of all $x \in X$ such that $f(x) \geq 1 - \eps'$. Then $\sum_{x \in X'} \lambda(x) \geq (1 - \frac{\eps}{\eps'}) \sum_{x \in X} \lambda(x)$. In particular if $\sum_{x \in X} f(x) \geq (1 - \eps) |X|$ then there are at least $(1 - \frac{\eps}{\eps'}) |X|$ values of $x$ such that $f(x) \geq 1 - \eps'$. 
\end{lemma}
\begin{proof}
We have
\[ (1 - \eps) \sum_{x \in X} \lambda(x)  \leq \sum_{x \in X} \lambda(x) f(x) = \sum_{x \in X'} \lambda(x) f(x) + \sum_{x \in X \setminus X'} \lambda(x) f(x) \leq \sum_{x \in X'} \lambda(x) + (1 - \eps') \sum_{x \in X \setminus X'} \lambda(x).\]
Rearranging this inequality gives the first result. The second one follows by taking all the weights $\lambda(x)$ to be 1.
\end{proof}

\begin{lemma}\label{lem2.8}
Let $c > 0$ be a sufficiently small absolute constant. Let $H \subset [-X,X]$ be a set of size $X^{1/2 - c/4}$, and let $Q = X^{1/2 - c/2}$. Then there is some $h \in H$ such that \[ \sum_{p \leq Q} \frac{\log p}{p} \frac{|S_p \cap (S_p + h)|}{p} < (\frac{1}{2} - c)  \log Q.\] 
\end{lemma}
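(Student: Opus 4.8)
The plan is to argue by contradiction. Suppose that \emph{every} $h\in H$ satisfies
\[\sum_{p\leq Q}\frac{\log p}{p}\,\frac{|S_p\cap(S_p+h)|}{p}\ \geq\ \Bigl(\tfrac12-c\Bigr)\log Q,\]
and deduce a contradiction with $|H|=X^{1/2-c/4}$ by feeding the structure this forces on $H$ into the larger sieve, Theorem~\ref{larger-sieve}.

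First a reduction. Write $\sigma_p:=|S_p|/p\leq\tfrac12+\tfrac1{2p}$, so $\sum_{p\leq Q}\frac{\log p}{p}\sigma_p\leq\tfrac12\log Q+O(1)$. If $\sum_{p\leq Q}\frac{\log p}{p}\sigma_p<(\tfrac12-c)\log Q$, then since $|S_p\cap(S_p+h)|\leq|S_p|$ \emph{every} $h$ already satisfies the conclusion and there is nothing to prove; so I may assume $\sum_{p\leq Q}\frac{\log p}{p}\sigma_p\geq(\tfrac12-c)\log Q$. Fix a small absolute constant $\eps$ (e.g.\ $\eps=\tfrac1{100}$) and take $c$ small in terms of $\eps$. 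Combining the contradiction hypothesis with the upper bound on $\sum\frac{\log p}{p}\sigma_p$ gives, for each individual $h$,
\[\sum_{p\leq Q}\frac{\log p}{p}\,\sigma_p\Bigl(1-\frac{|S_p\cap(S_p+h)|}{|S_p|}\Bigr)\ \ll\ c\log Q.\]
Applying the averaging principle of Lemma~\ref{simple-avg} with the weights $\mu(p):=\frac{\log p}{p}\sigma_p$ (whose total mass is $\gg\log Q$ by the case assumption), I conclude that for each $h$ the set $\mathscr{P}_h:=\{p\leq Q:\ |S_p\cap(S_p+h)|\geq(1-\eps)|S_p|\}$ carries almost all of the $\mu$-mass, say $\sum_{p\in\mathscr{P}_h}\mu(p)\geq\tfrac13\log Q$.

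Now Pollard's Lemma~\ref{add-comb} enters. Since $|S_p|\leq\tfrac12(p+1)<(1-2\eps)p$ for all $p$, the set $B_p:=\{h\bmod p:\ |S_p\cap(S_p+h)|\geq(1-\eps)|S_p|\}\subseteq\Z/p\Z$ has $|B_p|\leq 4\eps|S_p|+1$, hence density $\ll\eps$ after discarding the $O_\eps(1)$ smallest primes; and tautologically $p\in\mathscr{P}_h\iff h\bmod p\in B_p$. A weighted double count of the pairs $(h,p)$ with $h\in H$ and $p\in\mathscr{P}_h$ (using $|\{h\in H: h\bmod p\in B_p\}|\leq|H|$ and $\sum_{p\leq Q}\mu(p)\ll\log Q$) produces a set $\mathscr{P}^*$ of primes $p\leq Q$ with $\sum_{p\in\mathscr{P}^*}\frac{\log p}{p}\gg\log Q$ such that, for each $p\in\mathscr{P}^*$, at least $\tfrac16|H|$ of the $h\in H$ reduce into $B_p$ modulo $p$. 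I then apply the larger sieve to $\A:=H+X$ (translated to lie among the positive integers, $|\A|=|H|$, $\A\subseteq[2X+O(1)]$), taking $\mathscr{P}^*$ as the set of primes, the correspondingly shifted $B_p$ (of density $\ll\eps$) as the sieving sets, and $\delta=\tfrac16$; with sieve level $Q$ the denominator in Theorem~\ref{larger-sieve} is $\gg\eps^{-1}\log Q-\log(2X)\gg_{\eps}\log X$ for $X$ large, so $|H|\ll_{\eps}Q/\log X=X^{1/2-c/2}/\log X$. This contradicts $|H|=X^{1/2-c/4}$ once $X$ is large, forcing some $h\in H$ to violate the displayed hypothesis, which is the assertion of the lemma.

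The step I expect to be the crux — and where a naive argument breaks down — is the passage from ``each $h$ is bad for many primes'' to a usable larger-sieve input. One cannot bound, for a single fixed prime $p$, how many $h\in H$ land in $B_p$: the set $H$ is arbitrary of size only $X^{1/2-c/4}$, far below the $\asymp X$ integers of $[-X,X]$ that can lie in a prescribed positive-density residue set mod $p$, so $H$ could a priori be concentrated entirely on the bad residues modulo any one prime. The point is that a bad $h$ lies in the \emph{small} set $B_p$ modulo \emph{many} primes at once — precisely the ``$h$ occupies few residue classes modulo many $p$'' situation that the larger sieve converts into an upper bound for $|H|$ — and the averaging and double-counting are just the bookkeeping needed to make the parameters $\eps$ and $\delta$ uniform over the primes actually used.
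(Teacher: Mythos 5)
Your proposal is correct and is essentially the paper's own argument: both proceed by contradiction and combine the averaging principle of Lemma \ref{simple-avg}, Pollard's Lemma \ref{add-comb} and the larger sieve (Theorem \ref{larger-sieve}), the only real differences being that you average over primes for each fixed $h$ and then double count over pairs $(h,p)$, whereas the paper first averages over $h$ to select good primes and then over $h$ again for each such prime, and that you work with a fixed small absolute $\eps$ (giving $\delta=\frac{1}{6}$ and sieving sets of density $O(\eps)$) where the paper uses thresholds of the form $\frac{1}{2}-c^{1/4}$ (giving $\delta=1-c^{1/4}$ and density $\frac{1}{3}$). One quantitative caveat: with the crude constants your argument actually yields ($\delta^2=\frac{1}{36}$, prime mass $\asymp\frac{1}{4}\log Q$, density of $B_p$ at most about $3\eps$, $\log Q\approx\frac{1}{2}\log X$), the larger-sieve denominator is positive only for $\eps$ rather smaller than the illustrative $\frac{1}{100}$ (roughly $\eps<10^{-3}$), but since $\eps$ is a free absolute constant this is immaterial to the proof.
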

\begin{proof}
Suppose not. Then certainly
\[ \sum_{h \in H}\sum_{p \leq Q} \frac{\log p}{p} \frac{|S_p \cap (S_p + h)|}{p} \geq (\frac{1}{2} - c) |H| \log Q \geq (\frac{1}{2} - c) |H| \sum_{p \leq Q} \frac{\log p}{p}.\]
Write $\mathscr{P}$ for the set of primes $p \leq Q$ such that
\begin{equation}\label{averagehyp}
\sum_{h \in H} \frac{|S_p \cap (S_p + h)|}{p} \geq (\frac{1}{2} - c^{1/2}) |H| .
\end{equation}
By Lemma \ref{simple-avg} applied with $X$ the set of primes $p \leq Q$, $\lambda(p) = \frac{\log p}{2p}|H|$, $f(p) = \frac{2}{|H|} \frac{1}{p}\sum_{h \in H} |S_p \cap (S_p + h)|$, $\epsilon = 2c$ and $\epsilon' = 2c^{1/2}$ we have\footnote{The alert reader will observe that our applications of Lemma \ref{simple-avg} are slightly bogus, since we have $f(p) \leq (p+1)/p$ rather than $f(p) \leq 1$, as required in the lemma. This can be corrected by instead setting $\lambda(p) = \frac{\log p}{2p} \frac{p+1}{p} |H|$ and $f(p) = \frac{2}{|H|} \frac{1}{p+1}\sum_{h \in H} |S_p \cap (S_p + h)|$, which makes no essential difference to the conclusions about $\mathscr{P}$ and $H_p$.}
\[ \sum_{p \in \mathscr{P}} \frac{\log p}{p} \geq (1 - c^{1/2} ) \sum_{p \leq Q} \frac{\log p}{p}.\]
Note that $|S_p \cap (S_p + h)| \leq \frac{1}{2}(p+1)$ always. It also follows from Lemma \ref{simple-avg} applied to the inequality (\ref{averagehyp}) that, for all $p \in \mathscr{P}$, there is a set $H'_p \subset H$ with $|H'_p| \geq (1 - c^{1/4}) |H|$ such that $|S_p \cap (S_p + h)| \geq (\frac{1}{2} - c^{1/4})p$ for all $h \in H'_p$. On the other hand, by Lemma \ref{add-comb} it follows that $|H'_p \md{p}| \leq 4 c^{1/4} p + 1$, and so all but $c^{1/4} |H|$ elements of $H$ reduce to lie in a set of size $4 c^{1/4} p + 1 < \frac{1}{3}p$ modulo $p$, for all $p \in \mathscr{P}$, a set which satisfies $\sum_{p \in \mathscr{P}} \frac{\log p}{p} \geq (1 - c^{1/2}) (\log Q + O(1))$. We may apply the larger sieve, Theorem \ref{larger-sieve}, to this situation, taking $\delta = 1 - c^{1/4}$ and $\sigma_p = 1/3$ for all $p \in \mathscr{P}$. This gives the bound
\[ |H| \ll \frac{Q}{3(1 - c^{1/4})^{2} (1 - c^{1/2})(\log Q + O(1)) -\log X  }\] provided that the denominator is positive. If $c$ is sufficiently small then the denominator will be positive with our choice of $Q$, namely $X^{1/2 - c/2}$, and we get the bound $|H| \ll X^{1/2 - c/2 + o(1)}$. This is contrary to assumption.
\end{proof}

We may now conclude the proof of Proposition \ref{diff-largersieve}. As in the hypothesis of the proposition, let $A \subset [X]$ be a set such that $A \md{p} \subset S_p$ for all $p$, where $|S_p| \leq \frac{1}{2}(p+1)$. Suppose additionally that $E(A,A) \geq |A|^3/K$. By Lemma \ref{lem3.5} there is a set $H \subset [-X,X]$, $|H| \geq |A|/2K$, such that $|A \cap (A + h)| \geq |A|/2K$ for all $h \in H$. If $|H| < X^{1/2 - c/4}$ then the proposition follows, so suppose this is not the case. Then Lemma \ref{lem2.8} applies and we may conclude that there is an $h \in H$ such that 
\[ \sum_{p \leq Q} \frac{\log p}{p} \frac{|S_p \cap (S_p + h)|}{p} < (\frac{1}{2} - c) \log Q,\] where $Q = X^{1/2 - c/2}$. 
Finally, by Lemma \ref{lem2.7}, it follows that $|A| \ll K X^{1/2 - c/4}$, thereby concluding the proof of the proposition.
\end{proof}

\section{Sieving by intervals}\label{interval-sieve-sec}

Our aim in this section is to establish Theorem \ref{interval-sieve}. We begin by recalling the statement of it. 
\begin{interval-sieve-repeat}
Suppose that $\A$ is a set of integers and that, for each prime $p$, the set $\A \mdlem{p}$ lies in some interval $I_p$. Let $\eps > 0$ be arbitrary. Then
\begin{enumerate} 
\item If $|I_p| \leq (1 - \eps) p$ for at least a proportion $\eps$ of the primes in each dyadic interval $[Z,2Z]$ then $|\A[X]| \ll_{\eps} (\log \log X)^{C\log(1/\eps)}$, where $C > 0$ is some absolute constant;
\item If $|I_p| \leq \frac{p}{2}$ for all primes then $|\A[X]| \ll (\log \log X)^{\gamma + o(1)}$, where $\gamma = \frac{\log 18}{\log(3/2)} \approx 7.129$;
\item If $|I_p| = [\alpha p, \beta p]$ for all primes $p$ and for fixed $0 \leq \alpha < \beta < 1$ \textup{(}not depending on $p$\textup{)} then $|\A| = O_{\beta - \alpha}(1)$;
\item There are examples of infinite sets $\A$ with $|I_p| \leq (\frac{1}{2} + \eps) p$ for all $p$.
\end{enumerate}
\end{interval-sieve-repeat}

The proof of parts (i) and (ii) relies on the following basic lemma.
\begin{lemma}\label{large-fourier} Suppose that $p$ is a prime, that $I_p \subset \Z/p\Z$ is an interval of length at most $(1 - \eps) p$, and that $A \subset [X]$ is a set with $A \mdlem{p} \subset I_p$. Then there is some integer $k$, $1 \leq k \leq \lceil 2/\eps^2 \rceil$, such that 
\[ |\sum_{x \leq X} 1_A(x) e(k x/p) | \geq \eps |A|/32.\] If $|I_p| \leq p/2$ then we have the following more precise conclusion: there is an integer $k \in \{1,2\}$ such that
\[ |\sum_{x \leq X} 1_A(x) e(k x/p)|  \geq |A|/3.\]
\end{lemma}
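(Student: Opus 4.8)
The plan is to recast the problem as a statement about a probability measure on the circle $\T = \R/\Z$, and then test that measure against an explicit trigonometric polynomial. Write $g(a)$ for the number of $x \in A$ with $x \equiv a \md p$, so $g$ is supported on $I_p$, $\sum_a g(a) = |A|$, and $\sum_{x \leq X} 1_A(x) e(kx/p) = \sum_a g(a) e(ka/p)$. Set $\mu := |A|^{-1}\sum_a g(a)\delta_{a/p}$, a probability measure on $\T$; its support, being $|I_p|$ points in arithmetic progression of common difference $1/p$, lies in a closed arc of length $(|I_p|-1)/p \leq (1-\eps)-1/p$, so the complementary open arc has length $> \eps$. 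After rotating $\T$ — which multiplies each $\widehat\mu(k) := \int_\T e(k\theta)\,d\mu$ by a unimodular constant — I may thus assume $\mathrm{supp}(\mu) \subseteq \{\theta : \Vert\theta\Vert \geq \eps/2\}$. Since $|\sum_{x\leq X}1_A(x)e(kx/p)| = |A|\,|\widehat\mu(k)|$, it suffices to produce $1 \leq k \leq N := \lceil 2/\eps^2\rceil$ with $|\widehat\mu(k)| \geq \eps/32$; I may assume $0 < \eps < 1$.

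For this I would use the "tent" $\Lambda : \T \to [0,1]$, $\Lambda(\theta) := \max(0, 1 - 2\Vert\theta\Vert/\eps)$, supported on $\{\Vert\theta\Vert\leq\eps/2\}$, with $\int_\T\Lambda=\eps/2$ and nonnegative Fourier coefficients $\widehat\Lambda(k) = 2\sin^2(\pi k\eps/2)/(\pi^2 k^2\eps) \leq 2/(\pi^2 k^2 \eps)$ satisfying $\sum_k\widehat\Lambda(k) = \Lambda(0) = 1$. Let $P_0 := 1 - \tfrac2\eps\Lambda$, a mean-zero function identically equal to $1$ on $\{\Vert\theta\Vert\geq\eps/2\}$, and let $P$ be the (real) trigonometric polynomial obtained from $P_0$ by discarding all frequencies $|k| > N$. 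Then $P$ has mean $0$; its Fourier coefficients satisfy $\sum_{1\leq|k|\leq N}|\widehat P(k)| \leq \sum_{k\neq0}|\widehat{P_0}(k)| = \tfrac2\eps(1-\tfrac\eps2) < \tfrac2\eps$; and, since $P_0\equiv 1$ on $\{\Vert\theta\Vert\geq\eps/2\}$, on that arc $P = 1 - (\text{tail})$ where a short computation using $N\geq 2/\eps^2$ and the bound on $\widehat\Lambda(k)$ gives $|\text{tail}| \leq \tfrac2\eps\sum_{|k|>N}\widehat\Lambda(k) \leq 4/\pi^2 < \tfrac12$, so $P \geq \tfrac12$ there. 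Testing $\mu$ against $P$ and using $\mathrm{supp}(\mu)\subseteq\{\Vert\theta\Vert\geq\eps/2\}$ together with $\widehat P(0)=0$,
\[ \tfrac12 \leq \int_\T P\,d\mu = \sum_{1\leq|k|\leq N}\widehat P(k)\,\widehat\mu(k) \leq \Big(\sum_{1\leq|k|\leq N}|\widehat P(k)|\Big)\max_{1\leq k\leq N}|\widehat\mu(k)| < \frac2\eps\max_{1\leq k\leq N}|\widehat\mu(k)|, \]
whence $\max_{1\leq k\leq N}|\widehat\mu(k)| > \eps/4$, more than the claimed $\eps/32$ (I have not optimised constants).

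For the sharper bound when $|I_p|\leq p/2$, the support arc has length $<1/2$, so after a rotation $\mathrm{supp}(\mu)\subset[1/4,3/4]$; writing $\theta = \tfrac12+\phi$ pushes $\mu$ to a probability measure $\nu$ on $[-1/4,1/4]$ with $\widehat\mu(1) = -\int e(\phi)\,d\nu$ and $\widehat\mu(2) = \int e(2\phi)\,d\nu$. If $|\widehat\mu(1)| < \tfrac13$ then $\int\cos(2\pi\phi)\,d\nu < \tfrac13$ (the integrand being $\geq 0$ on $[-1/4,1/4]$), and then, since $0\leq\cos^2(2\pi\phi)\leq\cos(2\pi\phi)$ there, the double-angle formula gives $\mathrm{Re}\,\widehat\mu(2) = 2\int\cos^2(2\pi\phi)\,d\nu - 1 < -\tfrac13$, so $|\widehat\mu(2)|>\tfrac13$; either way $\max(|\widehat\mu(1)|,|\widehat\mu(2)|)\geq\tfrac13$, which is what is needed. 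The only genuinely non-routine ingredient is the construction of $P$: one smooths to the tent $\Lambda$ (coefficients decaying like $k^{-2}$) rather than using the indicator of the short arc directly (coefficients only $O(k^{-1})$, hence not absolutely summable), and one takes the truncation level $N\asymp\eps^{-2}$, precisely so that the tail above stays below $\tfrac12$ and the pointwise bound $P\geq\tfrac12$ on the long arc survives; I do not foresee any other difficulty.
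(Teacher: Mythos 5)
Your proof is correct and follows essentially the same route as the paper: your tent function $\Lambda$ is exactly the paper's convolution $1_{|\theta|\leq\eps/4}\ast 1_{|\theta|\leq\eps/4}$ (up to normalisation), truncated at the same level $\lceil 2/\eps^2\rceil$ using the $1/(\eps^2k^2)$ coefficient decay, and then tested against the set's counting measure --- the only cosmetic difference being that you use a mean-zero minorant bounded below by $\tfrac12$ on the support rather than a mean-one function that is nonpositive there. For the case $|I_p|\leq p/2$, your case analysis via $\cos^2(2\pi\phi)\leq\cos(2\pi\phi)$ and the double-angle formula is precisely the paper's inequality $1-2\cos\theta+\cos 2\theta=2\cos\theta(\cos\theta-1)\leq 0$ unwound, yielding the same constant $1/3$.
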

\begin{proof} 
We claim that there is a $1$-periodic real-valued function 
\[ f(\theta) = 1 + \sum_{0 < |k| \leq \lceil 2/\eps^2 \rceil} c_k e(k\theta)\]
such that $f(\theta) \leq 0$ when $|\theta| \geq \eps/2$. 

To construct $f(\theta)$, consider first the convolution $\psi(\theta) := 1_{|\theta| \leq \eps/4} \ast 1_{|\theta| \leq \eps/4} = \int_{\R/\Z} 1_{|\theta - \phi| \leq \eps/4} 1_{|\phi| \leq \eps/4} d\phi$. We have 
\[ |\hat{\psi}(k)| = |\widehat{1_{|\phi| \leq \eps/4}}(k)|^{2} = \big|\int_{\R/\Z} 1_{|\phi| \leq \eps/4} e(-k\phi) d\phi\big|^{2}  \leq \min(\eps, \frac{1}{\pi |k|})^2.\]
From the Fourier inversion formula it follows that 
\[ \frac{8}{\eps^2} \psi(\theta) = 2 + \sum_{k \neq 0} c_k e(k \theta),\] where $|c_k| \leq \min(8, \frac{1}{\eps^2 |k|^2})$. 
Furthermore, by construction, $\psi(\theta) = 0$ for $|\theta| \geq \eps/2$. Define 
\[ f(\theta) := 1 + \sum_{0 < |k| \leq K} c_k e(k \theta),\] where $K := \lceil 2/\eps^2 \rceil$. 
Since 
\[ \sum_{|k| > K} |c_k| \leq \sum_{|k| \geq K+1} \frac{1}{\eps^2 |k|^2} \leq \frac{2}{\eps^2 K} \leq 1,\] it follows that $f$ has the required properties. Now there is some $\beta \in [0,1]$ (depending on $I_p$) such that $\Vert \frac{x}{p} + \beta \Vert \geq \eps/2$ whenever $x \in A$, where $\Vert \cdot \Vert$ denotes distance to the nearest integer. This means that $f(\frac{x}{p} + \beta) \leq 0$, and so
\[ 1 + \sum_{0 < |k| \leq \lceil 2/\eps^2 \rceil} c_k e( k (\frac{x}{p} + \beta)) \leq 0.\]
It follows that
\[ |A| \leq -\sum_{0 < |k| \leq \lceil 2/\eps^2 \rceil} c_k \sum_{x \leq X} 1_A(x) e( k (\frac{x}{p} + \beta)).\]
Using the triangle inequality, one obtains
\[ |A| \leq \sum_{0 < |k| \leq \lceil 2/\eps^2 \rceil} |c_k| | \sum_{x \leq X} 1_A(x) e(k x/p) |.\]
To conclude the proof of the lemma, we observe that
\[ \sum_{0 < |k| \leq K} |c_k| \leq \frac{32}{\eps},\] an estimate that follows upon splitting into the ranges $0 < |k| \leq 1/\eps$ and $|k| > 1/\eps$.

For the second statement, simply note that the function $f(\theta) = 1 - 2\cos \theta + \cos 2\theta$ satisfies $f(\theta) \leq 0$ when $|\theta| \leq \pi/2$; rewriting the left-hand side as $2\cos\theta (\cos \theta - 1)$, this becomes clear. The rest of the argument proceeds as before.\end{proof}

We turn now to the proof of Theorem \ref{interval-sieve} (i). The general scheme of the argument, and in particular the use of Vinogradov's estimate (Proposition \ref{vino-est} below) was suggested to us by Jean Bourgain. We are very grateful to him for allowing us to include it here. The heart of the matter is the proof of the following lemma, from which Theorem \ref{interval-sieve} (i) follows rather easily by an iteration argument (or equivalently induction on $X$).

\begin{lemma}\label{it-step}
Suppose that $A \subset [X]$ and that $A \md{p}$ lies in an interval $I_p$ of length at most $(1-\eps) p$, for at least $\eps$ of all primes in each dyadic interval. Suppose that $X > X_0(\eps)$. Then there is a subinterval of $[X]$ of length $\exp (\log^{7/10} X)$ containing at least $c \eps^5 |A|$ points of $A$, where $c > 0$ is a small absolute constant.
\end{lemma}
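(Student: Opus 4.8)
The plan is to use Lemma \ref{large-fourier} to produce, for a positive proportion of primes $p \asymp P$ (where $P$ is a parameter to be chosen, something like $P = \exp(\log^{7/10}X)$), a nonzero frequency $k_p/p$ with $1 \leq k_p \leq \lceil 2/\eps^2 \rceil$ at which the exponential sum $\widehat{1_A}(k_p/p) := \sum_{x \leq X} 1_A(x) e(k_p x/p)$ has size $\geq \eps|A|/32$. Pigeonholing on the value of $k_p \in \{1,\dots,\lceil 2/\eps^2\rceil\}$, we may pass to a subfamily of primes, still of size $\gg \eps^3 P/\log P$, on which $k_p = k$ is constant; absorbing $k$ into the numerator, we get $\gg \eps^3 P/\log P$ rationals $a_p/p$ (with $1 \leq a_p \leq k(p-1)$, so these are genuinely distinct points in $\R/\Z$, well-spaced at scale $\gg 1/(kP) \gg \eps^2/P$) at each of which $|\widehat{1_A}|$ is large. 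This says $1_A$ has $L^2$ mass of its Fourier transform concentrated on a sparse set of Farey-type fractions with small denominator — exactly the input to a Vinogradov-type mean value / large-values estimate (the ``Proposition \ref{vino-est}'' referenced just after the lemma).

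Next I would feed this into Vinogradov's estimate for the number of large values of an exponential sum over $[X]$ at well-spaced points. The conclusion of such an estimate is that if $\sum_x 1_A(x) e(\theta_r x)$ is large (of size $\geq \delta|A|$, here $\delta \asymp \eps$) at $R$ well-spaced points $\theta_r$, with spacing $\gg 1/P$, then $A$ cannot be spread out: more precisely $A$ must have a substantial portion of its mass inside an interval of length roughly $X/(\text{something})$, or equivalently inside a short interval once one balances $R$, $P$ and $X$. Quantitatively, the dichotomy is between $R$ being small and $A$ having $\gg \eps^{O(1)}|A|$ points in an interval of length $\approx X \cdot (RP/X)^{-1}$ or so. With $R \gg \eps^3 P/\log P$ and $P = \exp(\log^{7/10}X)$, one checks that $RP/X$ is still tiny (since $P^2 \ll X$), so this forces concentration of $\gg \eps^3 |A|$ of the mass — after another application of Lemma \ref{simple-avg} to convert ``$L^2$-concentration near a frequency'' into ``density increment on an arithmetic-progression-like structure'', i.e. on a short interval (possibly after dilation, but a dilation of $[X]$ maps short intervals to short intervals up to the dilation factor, which is bounded by $k \leq \lceil 2/\eps^2\rceil$) — into a subinterval of length $\exp(\log^{7/10}X)$ containing $\geq c\eps^5|A|$ points of $A$. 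The extra powers of $\eps$ come from the three rounds of pigeonholing/averaging (choice of dyadic block of primes, choice of $k$, and the Vinogradov large-values count).

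The main obstacle, and the step requiring genuine care rather than bookkeeping, is the passage from ``$|\widehat{1_A}|$ is large at $\gg \eps^3 P/\log P$ well-spaced small-denominator frequencies'' to ``$\gg \eps^{O(1)}|A|$ points of $A$ sit in one short interval of length $\exp(\log^{7/10}X)$''. Applying a large-values/Vinogradov estimate gives an upper bound on the \emph{number} of large Fourier coefficients, which combined with the lower bound $R \gg \eps^3 P/\log P$ yields a contradiction \emph{unless} $A$ is far from uniformly distributed in residues to moduli $p \asymp P$; one must then turn this non-equidistribution statement into actual interval-concentration. The cleanest route is probably: a single large Fourier coefficient $|\widehat{1_A}(a/p)| \geq \eps|A|/32$ already forces, by a standard completion-of-the-sum argument, that the $p$ dilated progressions $\{x \equiv b \ (\mathrm{mod}\ p)\}$ are hit unevenly by $A$, hence some residue class mod $p$ contains $\geq (1+c\eps^2)|A|/p$ points; iterating/combining this across many primes $p \asymp P$ via the Chinese Remainder Theorem (or rather via a direct second-moment computation on $A(b;q)$ for $q$ a product of a bounded number of such primes) produces a modulus $q \approx P$ and a residue class containing $\gg \eps^{O(1)}|A| \cdot q/\phi$-type excess, which pigeonholed against the total range $[X]$ puts $\gg \eps^{O(1)}|A|$ points into an arithmetic progression of modulus $q$ — and after rescaling by $q$ this is a subinterval of length $\approx X/q$; choosing $q$ of size roughly $X/\exp(\log^{7/10}X)$ (so $P$ chosen accordingly) delivers the claimed interval length. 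Getting all the exponents to close — in particular ensuring $P$ can be taken large enough that the large-values estimate bites, yet small enough that $X/q = \exp(\log^{7/10}X)$, with the Vinogradov estimate being nontrivial precisely in this window — is where the specific exponent $7/10$ is forced, and is the real content of the lemma.
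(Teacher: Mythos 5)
Your opening step (Lemma \ref{large-fourier} plus pigeonholing in $k$ to get a single frequency $k$ with $|\sum_{a\in A}e(ka/p)|\geq \eps|A|/32$ for $\gg \eps^3$ of the primes in a dyadic block) matches the paper, but from there your argument goes off the rails, and the two places it fails are exactly the two places where the real work lies. First, Proposition \ref{vino-est} is not a large-values/large-sieve statement about $\widehat{1_A}$ at well-spaced points: it is a pointwise bound on the exponential sum $\sum_{Y\leq n\leq 2Y}e(x/n)$, with the variable in the \emph{denominator}. The paper's mechanism is to expand $|\sum_{a\in A}e(ka/n)|^2$ and average over $n\in[Y,2Y]$ (with a Selberg-type majorant $w(n)$ of the primes so that the sum runs over all integers $n$), which reduces matters to the sums $\sum_{Y\leq n\leq 2Y}w(n)e(k(a-a')/n)$; Vinogradov's bound then shows these exhibit cancellation unless $|a-a'|\leq Y^{10}$, so the pairs with $|a-a'|\leq Y^{10}$ must carry $\gg\eps^5\pi(Y)|A|^2$ of the mass, i.e.\ $\gg\eps^5|A|^2$ close pairs, i.e.\ an interval of length $Y^{10}=\exp(O(\log^{7/10}X))$ with $\gg\eps^5|A|$ points. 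Your proposed substitute — a large-values count at the frequencies $k/p$ — cannot do this job: the large sieve only bounds the \emph{number} of large Fourier coefficients, and with $R\gg\eps^3 P/\log P$ it yields at best an upper bound on $|A|$ (and no contradiction, since no lower bound on $|A|$ is available in the lemma), not any localisation of $A$ in physical space. Note also that non-equidistribution of $A$ modulo $p$ is already trivially guaranteed by the hypothesis $A\ (\mathrm{mod}\ p)\subset I_p$ with $|I_p|\leq(1-\eps)p$, so deducing it again buys nothing.

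Second, your endgame does not produce the object the lemma asserts. Concentration of $A$ in a residue class modulo $q$ (obtained via CRT from biases modulo several primes) is an arithmetic progression of common difference $q$, which is spread across all of $[X]$; ``rescaling by $q$'' maps it to an interval but then the points are no longer points of $A$ in $[X]$, so you have not exhibited a subinterval of $[X]$ of length $\exp(\log^{7/10}X)$ containing $c\eps^5|A|$ elements of $A$. This is not a cosmetic issue: the iteration that deduces Theorem \ref{interval-sieve}(i) from the lemma translates the short interval back to $[X_{i+1}]$, and translation preserves the hypothesis that $A\ (\mathrm{mod}\ p)$ lies in an interval, whereas a dilation by $q$ would turn the intervals $I_p$ into general arithmetic progressions modulo $p$. (A minor further slip: the frequencies $k/p$ for $p\asymp P$ are only $\gg k/P^2$-spaced, not $\gg 1/(kP)$-spaced, though this is immaterial next to the structural problems above.) In short, the missing idea is the differencing step $a,a'\in A$ combined with Vinogradov's bound for $\sum_n e(x/n)$, which is what converts Fourier information at the moduli $n\asymp Y$ into genuine short-interval concentration and is what forces the exponent $7/10$.
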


Indeed, before proving this lemma let us explain how it implies Theorem \ref{interval-sieve} (i). We set $X_{0} = X$ and $A_{0} = \A[X]$, and by repeated application of the lemma we construct numbers $X_{i}$ and sets $A_{i}$ such that $A_{i} \subset [X_{i}]$, $A_{i} \md{p}$ lies in an interval $I_p$ of length at most $(1-\eps) p$, for at least $\eps$ of all primes in each dyadic interval, $\log X_{i+1} = \log^{7/10} X_{i}$ and $|A_{i+1}| \geq c \eps^5 |A_{i}|$. This procedure terminates when we first have $X_{i+1} \leq X_{0}(\eps)$, which will happen after $\ll \log\log\log X$ iterations. Consequently we have $|\A[X]| \leq (c^{-1}\eps^{-5})^{O(\log\log\log X)} X_{0}(\eps) \ll_{\eps} (\log \log X)^{C\log(1/\eps)}$, as claimed\footnote{As we have written things, we need to have $X_{0}(\eps) = \exp(C\log^{100}(1/\epsilon))$ (say) in order for the parameter $Y$ in the proof of Lemma \ref{it-step} to be large enough. But we remark that by taking more care of the final iterations in the proof of  Theorem \ref{interval-sieve} (i), one could obtain a bound $|\A[X]| \ll (\log \log X)^{C\log(1/\eps)}$ for all $X$, with an absolute implied constant (not depending on $\eps$).} in Theorem \ref{interval-sieve}.

\begin{proof}[Proof of Lemma \ref{it-step}] Suppose that $p$ is a prime such that $A \md{p} \subset I_p$. By Lemma \ref{large-fourier}, there is some $k$, $1 \leq k \leq \lceil 2/\eps^2 \rceil$, such that 
\begin{equation}\label{eq997} |\sum_{a \in A} e(k a/p)| \geq \eps |A|/32.\end{equation}
Let $Y$, $1 \ll Y \ll X$, be a parameter to be selected later (we will in fact take $Y = \exp(c\log^{7/10} X)$). We may choose a single $k$ so that the preceding estimate holds for $\gg \eps^2$ of the primes in $[Y, 2Y]$ for which we know that $A \md{p} \subset I_p$, that is for $\gg \eps^3$ of all the primes in $[Y, 2Y]$. Now we use the following fact: there is a weight function $w : [Y, 2Y] \rightarrow \R_{\geq 0}$ such that 
\begin{enumerate}
\item $w(p) \geq 1$ for all primes $p \in [Y, 2Y]$;
\item $\sum_{Y \leq n \leq 2Y} w(n) \leq 10 \pi(Y)$;
\item $w(n) = \sum_{d| n: d \leq Y^{1/2}} \lambda_d$, where $\sum_{d \leq Y} \frac{|\lambda_d|}{d} \ll \log^{3} Y$.
\end{enumerate}
Such a function can be constructed in the form $w(n) = (\sum_{d | n} \mu(d) \psi(\frac{\log d}{\log Y}))^2$, where $\psi \in C^{\infty}(\R)$ is supported on $|x| \leq \frac{1}{4}$, is bounded in absolute value by 1, and $\psi(0) = 1$. Property (i) is then clear, whilst bound (ii) can be verified by expanding out and interchanging the order of summation. To check (iii), we note that it is clear that  $|\lambda_d| \leq \sum_{[d_1, d_2] = d} 1 \leq \tau_3(d)$, the number of ways of writing $d$ as a product of three nonnegative integers.  The claimed bound is then an easy exercise. It follows from \eqref{eq997} and the above properties that 

\begin{equation}\label{eq527} \sum_{Y \leq n \leq 2Y} w(n) |\sum_{a \in A}  e(ka/n)|^2 \geq c \eps^5 \pi(Y) |A|^2.\end{equation} 

Expanding out and applying the triangle inequality yields

\begin{equation}\label{eq37} \sum_{a, a' \in A} |\sum_{Y \leq n \leq 2Y} w(n) e(\frac{k(a - a')}{n}) |  \geq c\eps^5 \pi(Y) |A|^2.\end{equation}
We now claim that, if $Y$ is chosen judiciously, the contribution to this from those pairs $a,a'$ with $|a - a'| \geq Y^{10}$ (say) can be ignored. Indeed suppose, on the contrary, that

\begin{equation}\label{eq843} |\sum_{Y \leq n \leq 2Y} w(n) e(\frac{x}{n})| \geq \frac{c}{100}\eps^5 \pi(Y),\end{equation}
for some $x := k(a - a')$ satisfying $Y^{10} \leq x \ll \eps^{-2}X$. By property (iii) of $w(n)$ and the triangle inequality, this implies that 

\[ \sum_{d \leq Y^{1/2}} |\lambda_d| |\sum_{Y/d \leq n' \leq 2Y/d} e(\frac{x}{dn'})| \geq \frac{c}{100} \eps^5 \pi(Y).\] By the upper bound (iii) for $\sum |\lambda_d| / d$ it follows that there is some $d \leq Y^{1/2}$ such that 

\begin{equation}\label{eq478} |\sum_{Y/d \leq n' \leq 2Y/d} e(\frac{x}{d n'})| \gg \eps^5 \log^{-4} Y \frac{Y}{d}.\end{equation}
At this point we invoke the following powerful estimate of Vinogradov.

\begin{proposition}\label{vino-est}
Let $\delta > 0$ be small and $Y$ be large, and suppose that $x \geq Y^5$. Suppose that $|\sum_{Y \leq n \leq 2Y} e(x/n)| \geq \delta Y$. Then $x \geq \exp( c\log^{3/2} Y/\log^{1/2}(1/\delta))$.
\end{proposition}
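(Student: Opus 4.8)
\emph{Proof plan.} This is essentially a classical exponential-sum estimate of Vinogradov, and the plan is to deduce it by rearranging the bound
\[ \Bigl|\sum_{Y \leq n \leq 2Y} e(x/n)\Bigr| \ll Y \exp\Bigl(- \frac{c_0 \log^3 Y}{\log^2 x}\Bigr), \]
valid for $x \geq Y^5$ and $Y$ large, where $c_0 > 0$ is absolute. Indeed, if the left-hand side is $\geq \delta Y$ then $\delta \ll \exp(-c_0 \log^3 Y/\log^2 x)$, hence $\log^2 x \gg \log^3 Y/\log(1/\delta)$ --- here one uses that $\delta$ is small, so that $\log(1/\delta)$ dominates the absorbed constant --- and therefore $x \geq \exp(c\log^{3/2}Y/\log^{1/2}(1/\delta))$, which is exactly what is wanted. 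So the task reduces to the displayed bound.

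To prove that bound one runs Vinogradov's method. First cover $[Y,2Y]$ by $O(Y/H)$ intervals of length $H$; on an interval $[n_0,n_0+H]$ write $n = n_0 + m$ and Taylor-expand $x/n$ about $n_0$, obtaining $x/n = P(m) + O\bigl(\tfrac{x}{Y}(H/Y)^{k+1}\bigr)$, where $P$ has degree $k$ and $j$-th coefficient $\alpha_j = (-1)^j x/n_0^{j+1}$ (so $|\alpha_k| \asymp x/Y^{k+1}$). Choosing $H$ so that the error term is $\leq \tfrac12$, the interval contributes $\sum_{0 \leq m \leq H} e(P(m))$ up to a negligible amount. One then invokes the Vinogradov $k$-th derivative estimate --- equivalently, Vinogradov's mean value theorem applied to $P$ --- to the effect that, uniformly in the lower-order coefficients, $\bigl|\sum_{0 \leq m \leq H} e(P(m))\bigr| \ll H^{1 - \rho + o(1)}$ with $\rho = c_0/k^2$, provided $|\alpha_k|$ lies in the admissible range (roughly $Y^{-k} \leq |\alpha_k| \leq Y^{-1}$). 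The role of the hypothesis $x \geq Y^5$, together with the choice $k := \lceil 2\log x/\log Y\rceil$, is precisely that it places $|\alpha_k| \asymp x/Y^{k+1}$ in that range while still allowing $\log H \asymp \log Y$ with the Taylor error controlled.

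Summing over the $\ll Y/H$ intervals then gives $\bigl|\sum_{Y\leq n\leq 2Y}e(x/n)\bigr| \ll Y H^{-\rho + o(1)} = Y\exp(-(\rho+o(1))\log H)$, and with $\rho \asymp \log^2 Y/\log^2 x$ and $\log H \asymp \log Y$ the exponent is $\asymp \log^3 Y/\log^2 x$, which is the required bound. (One also subdivides $[Y,2Y]$ into $O(k)$ further pieces on which $\alpha_k$ varies by at most a factor $2$; this is absorbed into the $Y^{o(1)}$.)

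The main obstacle is of course the Vinogradov $k$-th derivative estimate itself, which rests on Vinogradov's mean value theorem --- that is where the real work, namely the minor-arc/major-arc dichotomy for Weyl sums and the induction on the degree, is done. As this is entirely classical one would cite it (e.g.\ Titchmarsh, \emph{The Theory of the Riemann Zeta-function}, Ch.~6, or Montgomery, \emph{Ten Lectures on the Interface of Analytic Number Theory and Harmonic Analysis}, Ch.~4) rather than reprove it, so the only points needing real care on our side are the choice of $k$ and $H$ and the verification that the lower-order losses (the $o(1)$ exponents, the $O(k)$ subdivision, and the constant absorbed in the rearrangement) are genuinely dominated by the saving $\log^3 Y/\log^2 x$ --- which is where both $x \geq Y^5$ and the smallness of $\delta$ are used. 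The exponents $3/2$ and $1/2$ obtained this way are not optimal, but are more than enough for the application in Lemma~\ref{it-step}.
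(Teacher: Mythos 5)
Your deduction is exactly the paper's: the paper quotes the bound $\bigl|\sum_{Y \leq n \leq 2Y} e(x/n)\bigr| \ll Y \exp(-c\log^{3}Y/\log^{2}x)$ directly from Iwaniec--Kowalski (Theorem 8.25) and then rearranges it precisely as in your first paragraph. Your further sketch of the Vinogradov-method proof of that bound is standard background which the paper bypasses by citation, so the two arguments are essentially the same.
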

\begin{proof} Using e.g. Theorem 8.25 of Iwaniec and Kowalski \cite{iwaniec-kowalski}, one obtains that
\[ |\sum_{Y \leq n \leq 2Y} e(x/n)| \ll Y e^{-c\frac{\log^{3}Y}{\log^{2}x}}. \]
Thus we must have $1/\delta \gg \exp(c\log^{3}Y/\log^{2}x)$, from which the conclusion of the proposition quickly follows.
\end{proof}

Applying this Proposition to \eqref{eq478} leads to a contradiction unless 
\[ \frac{x}{d} > \exp \bigg(c \frac{\log^{3/2} Y}{(\log \log Y)^{1/2} + (\log(1/\epsilon))^{1/2}}\bigg), \]
which would imply that $X \gg \epsilon^{2} x > \exp (c (\log^{3/2} Y)/((\log \log Y)^{1/2} + (\log(1/\epsilon))^{1/2}))$. With $Y = \exp(c\log^{7/10} X)$ and $X \geq \exp(C\log^{100}(1/\epsilon))$, say, this will not be so. It follows that we were wrong to assume \eqref{eq843}, and so indeed the contribution to \eqref{eq37} of those pairs $a,a'$ with $|a - a'| \geq Y^{10}$ may be ignored. Thus we have

\begin{equation}\label{eq372} \sum_{\substack{a, a' \in A \\ |a - a'| \leq Y^{10}}} \bigg|\sum_{Y \leq n \leq 2Y} w(n) e(\frac{k(a - a')}{n}) \bigg|  \geq \frac{c}{2}\eps^5 \pi(Y) |A|^2.\end{equation}
Finally, we may apply the trivial bound to the inner sum, recalling from (ii) above that $\sum_{Y \leq n \leq 2Y} w(n) \leq 10 \pi(Y)$. We obtain
\[ \sum_{\substack{a, a' \in A \\ |a - a'| \leq Y^{10}}}  1 \gg \eps^5 |A|^2,\] 
which implies that there is a subinterval of $[X]$ of length $Y^{10}$ containing $\gg \eps^5 |A|$ elements of $A$. This concludes the proof of Lemma \ref{it-step}, and hence of Theorem \ref{interval-sieve} (i). \end{proof}

\emph{Proof of Theorem \ref{interval-sieve} (ii)} (sketch). We proceed as above, with the following changes.

\begin{itemize}
\item Use the second conclusion of Lemma \ref{large-fourier} to conclude that there is some $k \in \{1,2\}$ such that 
\[ \sum_{Y \leq p \leq 2Y} | \sum_{a \in A} e(ka/p)|^{2} \geq \frac{1}{18} (\pi(2Y) - \pi(Y)) |A|^{2}.\] This takes the place of \eqref{eq527}.
\item Expand out as in \eqref{eq37} to get
\[ \sum_{a, a' \in A} |\sum_{Y \leq p \leq 2Y} e(\frac{k(a - a')}{p})| \geq \frac{1}{18} (\pi(2Y) - \pi(Y)) |A|^2.\]
\item Choose $Y = \exp(\log^{2/3+o(1)} X)$, and use Jutila \cite[Theorem 2]{jutila} (which is a Vinogradov-type estimate for $\sum_{p \leq P} e(x/p)$) to show that the contribution from those pairs with $|a - a'| \geq Y^{10}$ can be ignored, so we have
\[ \sum_{\substack{a, a' \in A \\ |a - a'| \leq Y^{10}}} |\sum_{Y \leq p \leq 2Y} e(\frac{k(a - a')}{p})| \geq (\frac{1}{18} - o(1)) \pi(Y) |A|^2.\]
\item Conclude that there is some interval of length $\sim Y^{10}$ containing at least $(\frac{1}{18} - o(1))|A|$ points of $A$, and proceed iteratively as before.
\end{itemize}
\emph{Remark.} We could have used Jutila's bound in the proof of Theorem \ref{interval-sieve} (i) as well, instead of using the weight function $w$ . We chose not to do this in the interests of self-containment and of variety. Note that Jutila's paper predates Vaughan's identity \cite{vaughan} for prime number sums, and his argument would be a little more accessible if this device were used. A model for such an argument may be found in the paper of Granville and Ramar\'e \cite{granville-ramare}. \vspace{11pt}

\emph{Proof of Theorem \ref{interval-sieve}} (iii). This is essentially a consequence of Jutila \cite[Corollary, p126]{jutila}. A slight variant of that Corollary shows that the number of $p \in [x^{1/2}, 2x^{1/2}]$ with $\alpha \leq \{ x/p \} \leq \beta$ is $\sim (\beta - \alpha) (\pi(2x^{1/2}) - \pi(x^{1/2}))$, and so all elements of $\A$ are bounded by $O_{\beta - \alpha}(1)$.\vspace{11pt}

\emph{Proof of Theorem \ref{interval-sieve}} (iv). We take $\A$ to consist of the numbers $a_i = \prod_{p \leq X_i} p$, for some extremely rapidly-growing sequence $X_1 < X_2 < \dots$. Given a prime $p$, suppose that $X_i \leq p \leq X_{i+1}$. Then $a_{i+1},a_{i+2},\dots$ all reduce to zero $\md{p}$, and so $\A \md{p} = \{0,a_1,\dots, a_i\}$. By choosing the $X_i$ sufficiently rapidly growing we may ensure that $0 < a_1 < \dots < a_{i-1} < \eps p$. Regardless of the value of $a_i \md{p}$ (which we cannot usefully control) the set $\A \md{p}$ will be contained in some interval of length at most $(\frac{1}{2} + \eps) p$.\vspace{11pt}

\emph{Remark.} With $\A$ as constructed above, the shape of $|\A[X]|$ is $\log_* X$. Thus there is still a considerable gap between the bound of (i) and the construction given here.  We expect, however, that the correct bound in (i) is of $\log_* $ type, which would follow assuming vaguely sensible conjectures on exponential sums $\sum_{n \leq Y} e(x/n)$. If, for example, the conclusion of Proposition \ref{vino-est} were instead that $x \geq \exp(Y^{1/10})$ then we would get a $\log_*$-type bound on $\A[X]$ in this case.

\section{Sieving by arithmetic progressions}

In this section we shall prove Theorem \ref{prog-thm}, whose statement was as follows.

\begin{prog-thm-rpt}
Let $\eps > 0$. Suppose that $\A$ is a set of integers and that, for each prime $p \leq X^{1/2}$, the set $\A \mdlem{p}$ lies in some arithmetic progression $S_p$ of length $(1-\eps)p$. Then $|\A[X]| \ll_{\eps} X^{1/2 - \eps'}$, where $\eps' > 0$ depends on $\eps$ only.
\end{prog-thm-rpt}

Suppose to begin with that $|S_{p}|=\frac{1}{2}(p+1)$ for each odd prime $p$. Then (dilating $S_p$ to the interval $\{1,\dots, \frac{1}{2}(p+1)\}$, which preserves the additive energy)
$$ E_{p}(S_{p},S_{p}) = (\frac{p+1}{2})^{2} + 2\sum_{h=1}^{\frac{1}{2}(p-1)} (\frac{p+1}{2} - h)^{2} \geq \frac{p^{3}}{12} . $$
Thus Theorem \ref{thm1.4} is applicable with the choice $\delta = 1/48$, and the result follows in this case.

Now we turn to the proof of Theorem \ref{prog-thm} for arbitrary $\eps > 0$. We begin with a result which should be compared to Corollary \ref{cor1.3}, but which is simpler to state and prove than that result.

\begin{lemma}\label{progr-energy}
Let $\eps > 0$ be small, and suppose that $A \subset [X]$ is a set and $\mathscr{P} \subset [X^{1/2}]$ is a set of primes such that $|\mathscr{P}| \geq 2/\eps^{2}$. If $S_{p} \subset \Z/p\Z$ is an arithmetic progression of length at most $(1-\eps)p$, and if $A \mdlem{p} \subset S_p$ for each prime $p \in \mathscr{P}$, then $E(A,A) \gg \frac{\eps^{4} |A|^{4}}{X} |\mathscr{P}|$,
where the constant implicit in the $\gg$ notation is absolute.  
\end{lemma}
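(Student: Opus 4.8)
The plan is to imitate the strategy of Corollary \ref{cor1.3} but exploiting the much more rigid structure of an arithmetic progression: rather than using a lower bound for the number of additive quadruples in $S_p$ together with a delicate Fourier comparison, I would directly produce many additive quadruples in $A$ from the congruence constraints. Concretely, write $r(x)$ for the number of representations $x = a_1 + a_2$ with $a_1, a_2 \in A$, so that $E(A,A) = \sum_{x \leq 2X} r(x)^2$, and for each $p \in \mathscr{P}$ let $E_p(A,A) = \sum_{x \equiv x' \mdsub{p}} r(x) r(x')$ denote the number of additive quadruples of $A$ that are additive quadruples modulo $p$. The key point is the lower bound $E_p(A,A) \gg \eps^4 |A|^4/p$ for each $p \in \mathscr{P}$, which beats the trivial bound $|A|^4/p$ by a constant factor: since $A \mdsub{p}$ lies in an arithmetic progression $S_p$ of length $(1-\eps)p$, after dilating so that $S_p$ is an honest interval $\{1, \dots, (1-\eps)p\}$, the sums $a_1 + a_2$ with $a_i \in A \mdsub{p}$ are confined (as residues) to an interval of length $2(1-\eps)p$, which is a proper subinterval of $\Z/p\Z$ of relative size $2(1-\eps) < 2$; one then checks that the $L^2$-mass of the representation function of $S_p + S_p$, suitably normalised, exceeds $|S_p|^4/(p^3 \cdot p) \cdot (1 + c\eps)$ by a routine computation with the convolution of an interval with itself. (Equivalently, $\Vert \widehat{1_{S_p}} \Vert_4^4 \geq |S_p|^4/p^3 \cdot (1 + c\eps)$, which is where the hypothesis $|\mathscr{P}| \geq 2/\eps^2$ is not yet needed.)

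Next I would invoke Lemma \ref{lift-lem}, ``lifting additive energy'': it gives
\[ \sum_{p \leq X^{1/2}} p\Big(E_p(A,A) - \frac{|A|^4}{p}\Big) \leq 3 X E(A,A). \]
Restricting the sum on the left to $p \in \mathscr{P}$ and using the pointwise lower bound $E_p(A,A) - |A|^4/p \gg \eps^4 |A|^4/p$ for such $p$, the left-hand side is $\gg \eps^4 |A|^4 |\mathscr{P}|$, and rearranging yields $E(A,A) \gg \eps^4 |A|^4 |\mathscr{P}| / X$, which is exactly the claimed bound. The role of the hypothesis $|\mathscr{P}| \geq 2/\eps^2$ is only to guarantee that this is a genuine improvement over the trivial bound $E(A,A) \geq |A|^4/(2X)$ coming from $|A| \leq X$ (so that the conclusion is non-vacuous and can be fed into a differenced-larger-sieve argument as in Section 3); strictly it is harmless to keep it as a stated hypothesis even though the displayed inequality holds regardless.

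The main obstacle, and the only place requiring genuine care, is the first step: verifying that an arithmetic progression (equivalently, after dilation, an interval) $S_p \subset \Z/p\Z$ of length $(1-\eps)p$ really does have $\Vert \widehat{1_{S_p}} \Vert_4^4 \geq (1 + c\eps)|S_p|^4/p^3$ with an absolute constant $c > 0$, uniformly in $p$. This is a clean finite computation — the fourth moment of $1_{S_p}$ is $E_p(S_p,S_p)/p^3$, and $E_p(S_p,S_p) = \sum_h |S_p \cap (S_p + h)|^2$ where $|S_p \cap (S_p+h)| = \max(0, |S_p| - \Vert h \Vert_p)$ ranges over a triangular profile supported on an interval of length about $2(1-\eps)p < p$, so the sum is genuinely larger than the ``flat'' value $|S_p|^4/p$ that one would get if the overlaps were spread over all of $\Z/p\Z$ — but one must track the $\eps$-dependence honestly to extract a factor $1 + c\eps$ rather than merely $1 + c\eps^2$; I would expect the correct power is $\eps^4$ in the final energy bound precisely because the gain in $E_p$ is of order $\eps$ (not $\eps^2$) and Lemma \ref{lift-lem} is lossless, but it is worth double-checking whether one loses a power of $\eps$ somewhere in passing from the $L^4$ statement back to a clean lower bound valid for all $p$ including small ones. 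For very small $p$ (bounded in terms of $\eps$) the bound $|S_p| \leq (1-\eps)p$ forces $|S_p| \leq p - 1$, so $S_p$ is a proper sub-progression and the same triangular-profile argument applies; there is no exceptional case to worry about.
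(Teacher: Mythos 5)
Your overall skeleton --- a per-prime lower bound $E_p(A,A) - |A|^4/p \gg \eps^4 |A|^4/p$ for the primes of $\mathscr{P}$, fed into Lemma \ref{lift-lem} and rearranged --- is exactly the paper's, but the way you propose to obtain the per-prime gain does not work, and that gain is the whole content of the lemma. First, the ``confinement'' step is false in the relevant regime: for $\eps < 1/2$ a progression/interval $S_p$ of length $(1-\eps)p$ satisfies $S_p + S_p = \Z/p\Z$, so the sums $a_1 + a_2$ are not restricted to a proper subset of the residues and no Cauchy--Schwarz gain comes from that. Second, your fallback computes the additive energy (equivalently $\Vert \widehat{1_{S_p}}\Vert_4^4$) of the \emph{container} $S_p$, and this transfers nothing to $A$: the hypothesis only says that $A \md{p}$ is some weighted subset of $S_p$, and a subset spread uniformly over $S_p$ has $E_p(A,A) = (1 + O(\eps^3))|A|^4/p$. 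Indeed even for $S_p$ itself the gain is only of order $\eps^3$, not $\eps$ (the nonzero normalised Fourier coefficients of an interval of length $(1-\eps)p$ have size roughly $\min(\eps, 1/|k|)$, so the extra fourth-moment mass is of order $\eps^3$), so the $(1+c\eps)$ inequality you plan to ``check by a routine computation'' is itself false, and your guess that the per-prime gain should be of order $\eps$ cannot be sustained; the exponent $4$ in the lemma is not a loss incurred elsewhere, it is the true size of the guaranteed per-prime gain.

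The missing idea, which is the paper's proof, is to apply Lemma \ref{large-fourier} to $A$ itself (after dilating $S_p$ to an interval, which is permissible since dilation is a Fourier-side permutation): since $A \md{p}$ avoids at least $\eps p$ residue classes, there is a nonzero integer $k \leq \lceil 2/\eps^2 \rceil$ with $|\sum_{x \leq X} 1_A(x) e(kx/p)| \geq \eps|A|/32$. If $p > \lceil 2/\eps^2\rceil$ this frequency is nonzero mod $p$, and then in the identity $E_p(A,A) = p^3 \sum_{r} \big|\frac{1}{p}\sum_{x \leq X} 1_A(x) e(-xr/p)\big|^4$ the $r=0$ term equals $|A|^4/p$ while the single term $r = k$ contributes at least $\eps^4 |A|^4/(32^4 p)$. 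This also corrects your reading of the hypothesis $|\mathscr{P}| \geq 2/\eps^{2}$: it is not there merely to make the conclusion non-vacuous, but to guarantee that a positive proportion of the primes in $\mathscr{P}$ exceed $2/\eps^2$, so that the per-prime gain is available for $\gg |\mathscr{P}|$ primes (for smaller $p$ the guaranteed frequency may vanish mod $p$, and the relative gain can genuinely drop below $\eps^4$, e.g.\ when $\eps p < 2$ and $A$ is spread uniformly over a set missing a single class). With that per-prime input, your appeal to Lemma \ref{lift-lem} (discarding the nonnegative terms coming from the bad primes) and the final rearrangement are fine.
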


\begin{proof}
In view of Lemma \ref{lift-lem}, and our assumption that $|\mathscr{P}| \geq 2/\eps^{2}$, it will suffice to show that
$$ E_{p}(A,A) - \frac{|A|^{4}}{p} \gg \frac{\eps^{4} |A|^{4}}{p} $$
for each prime $p \in \mathscr{P}$ that is greater than $2/\eps^{2}$ (that being a positive proportion of all the primes in $\mathscr{P}$). As in the proof of Corollary \ref{cor1.3} we have
$$ E_{p}(A,A)  = p^3 \sum_{r \in \Z/p\Z} \big|\frac{1}{p} \sum_{x \leq X} 1_A(x) e(-xr/p) \big|^{4} . $$
The contribution from the $r=0$ term is evidently equal to $|A|^{4}/p$. By Lemma \ref{large-fourier} (which was stated in the case $S_p$ is an interval, but may easily be adapted to the case $S_p$ a progression by dilation), if $p > \lceil 2/\eps^2 \rceil$ then there is some nonzero $r$ satisfying $|\sum_{x \leq X} 1_A(x) e(rx/p)| \geq \eps |A|/32$. The result follows immediately.
\end{proof}

The other major ingredient that we shall need is an analogue of Lemma \ref{lem2.8} that applies when the sets $S_{p}$ have size at most $(1-\eps)p$, rather than size at most $\frac{1}{2}(p+1)$ as in that lemma. The following result provides this.
\begin{lemma}\label{progr-goodh}
Let $c > 0$ be a sufficiently small absolute constant. Let $H \subset [-X,X]$ be a set of size $X^{1/2 - c/4}$, and let $Q = X^{1/2 - c/2}$. Suppose that for each prime $p \in \mathscr{P}'$ we have a subset $S_p \subset \Z/p\Z$ such that $\frac{1}{10}p \leq |S_p| \leq (1-\eps)p$, where $\mathscr{P}'$ is a subset of the primes $p \leq Q$ satisfying $\sum_{p \in \mathscr{P}'} \frac{\log p}{p} \geq \frac{1}{3}\log Q$.
Then there is some $h \in H$ such that \[ \sum_{p \in \mathscr{P}'} \frac{\log p}{p} \frac{|S_p \cap (S_p + h)|}{p} < (1 - c\eps) \sum_{p \in \mathscr{P}'} \frac{\log p}{p} \frac{|S_p|}{p} .\] 
\end{lemma}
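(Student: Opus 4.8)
The plan is to argue by contradiction, in close parallel with the proof of Lemma~\ref{lem2.8}, the main new feature being that, because the sizes $|S_p|$ now vary, one should weight the primes of $\mathscr{P}'$ by $\lambda(p):=\frac{\log p}{p}\cdot\frac{|S_p|}{p}$ rather than by $\frac{\log p}{p}$. So suppose, for contradiction, that for \emph{every} $h\in H$ one has $\sum_{p\in\mathscr{P}'}\frac{\log p}{p}\,\frac{|S_p\cap(S_p+h)|}{p}\geq(1-c\eps)W$, where $W:=\sum_{p\in\mathscr{P}'}\lambda(p)$. Since $\frac1{10}p\leq|S_p|\leq p$ we have $\frac1{30}\log Q\leq\frac1{10}\sum_{p\in\mathscr{P}'}\frac{\log p}{p}\leq W\leq\sum_{p\leq Q}\frac{\log p}{p}=\log Q+O(1)$. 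Summing the assumed inequality over $h\in H$, dividing by $|H|$, and writing $f(p):=\frac1{|H|\,|S_p|}\sum_{h\in H}|S_p\cap(S_p+h)|\in[0,1]$, we obtain
\[ \sum_{p\in\mathscr{P}'}\lambda(p)f(p)\ \geq\ (1-c\eps)W\ =\ (1-c\eps)\sum_{p\in\mathscr{P}'}\lambda(p). \]

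First I would apply the averaging principle, Lemma~\ref{simple-avg}, with the weights $\lambda(p)$, to deduce that $\mathscr{P}'':=\{p\in\mathscr{P}':f(p)\geq1-c^{1/2}\}$ satisfies $\sum_{p\in\mathscr{P}''}\lambda(p)\geq(1-c^{1/2}\eps)W$; since $\lambda(p)\leq\frac{\log p}{p}$ and $W\gg\log Q$, this forces $\sum_{p\in\mathscr{P}''}\frac{\log p}{p}\gg\log Q$. For each fixed $p\in\mathscr{P}''$ the bound $f(p)\geq1-c^{1/2}$ reads $\frac1{|H|}\sum_{h\in H}\frac{|S_p\cap(S_p+h)|}{|S_p|}\geq1-c^{1/2}$, so a second application of Lemma~\ref{simple-avg} (with unit weights) produces a set $H_p\subseteq H$ with $|H_p|\geq(1-c^{1/4})|H|$ such that $|S_p\cap(S_p+h)|\geq(1-c^{1/4})|S_p|$ for all $h\in H_p$. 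Now I invoke Pollard's Lemma~\ref{add-comb} with parameter $c^{1/4}$: provided $c$ is small enough that $c^{1/4}<\eps/2$, the hypothesis $|S_p|\leq(1-\eps)p<(1-2c^{1/4})p$ is met, and the conclusion is that there are at most $4c^{1/4}|S_p|+1\leq4c^{1/4}p+1$ residues $r\in\Z/p\Z$ with $|S_p\cap(S_p+r)|\geq(1-c^{1/4})|S_p|$; in particular $H_p\,\md{p}$ lies in a set $T_p$ of that size. Passing to $\mathscr{P}''':=\{p\in\mathscr{P}'':p>c^{-1/4}\}$, which still has $\sum_{p\in\mathscr{P}'''}\frac{\log p}{p}\gg\log Q$ (discarding the primes below $c^{-1/4}$ costs only $O(\log(1/c))$), we have $\sigma_p:=|T_p|/p<5c^{1/4}$ for every $p\in\mathscr{P}'''$.

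Finally I would apply the larger sieve, Theorem~\ref{larger-sieve}, to $H$ (translated to lie in $[1,2X+1]$): with $\mathscr{P}=\mathscr{P}'''$, with $\A'_p=H_p$ so that $\delta=1-c^{1/4}$, and with the sets $T_p$ of density $\sigma_p<5c^{1/4}$, it gives $|H|\ll Q/D$, where
\[ D\ =\ (1-c^{1/4})^2\sum_{p\in\mathscr{P}'''}\frac{\log p}{\sigma_p p}-\log(2X+1)\ \geq\ \frac{(1-c^{1/4})^2}{5c^{1/4}}\sum_{p\in\mathscr{P}'''}\frac{\log p}{p}-\frac{2}{1-c}\log Q-O(1). \]
Because $\sum_{p\in\mathscr{P}'''}\frac{\log p}{p}\gg\log Q$ while $\frac{2}{1-c}\log Q\leq2.1\log Q$ for $c$ small, taking $c$ sufficiently small makes $D\gg\log Q>0$, whence $|H|\ll Q/\log Q\ll X^{1/2-c/2}$, contradicting $|H|=X^{1/2-c/4}$ once $X$ is large. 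The one place where genuine care is needed is the interplay of constants: Pollard's lemma requires $|S_p|<(1-2c^{1/4})p$, and since all we know is $|S_p|\leq(1-\eps)p$ this imposes $c<(\eps/2)^4$ (so $c$ may in fact be taken absolute unless $\eps$ is very small); one must then check that the resulting sieving density $\sigma_p=O(c^{1/4})$, set against the thin lower bound $\sum_{p\in\mathscr{P}'''}\frac{\log p}{p}\gg\log Q$ — which is all the hypotheses $\frac1{10}p\leq|S_p|$ and $\sum_{p\in\mathscr{P}'}\frac{\log p}{p}\geq\frac13\log Q$ afford — still beats $\log X\approx2\log Q$ in the larger-sieve denominator.
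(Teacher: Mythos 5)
Your overall strategy is exactly the paper's: argue by contradiction, weight the primes by $\lambda(p)=\frac{\log p}{p}\frac{|S_p|}{p}$, apply Lemma \ref{simple-avg} twice, then Pollard's Lemma \ref{add-comb} and the larger sieve. However, there is a genuine defect in how you distribute the parameter $\eps$ between the two averaging steps, and it breaks the statement you are supposed to prove. You take the ``closeness'' thresholds to be $f(p)\geq 1-c^{1/2}$ and $|S_p\cap(S_p+h)|\geq(1-c^{1/4})|S_p|$, putting the factor $\eps$ into the surviving mass fractions instead. Consequently your application of Pollard's lemma uses the parameter $c^{1/4}$ and needs $|S_p|<(1-2c^{1/4})p$, which the hypothesis $|S_p|\leq(1-\eps)p$ only supplies when $2c^{1/4}<\eps$, i.e.\ $c<(\eps/2)^4$. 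But the lemma asserts that $c$ is an \emph{absolute} constant, and in the intended application (Theorem \ref{prog-thm}) $\eps$ is small and the iteration runs for $O(1/\eps)$ steps precisely because the per-step saving $(1-c\eps)$ has $c$ independent of $\eps$; with $c\sim\eps^4$ the downstream argument does not go through as written. Your parenthetical ``$c$ may be taken absolute unless $\eps$ is very small'' concedes exactly the regime that matters.

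The repair is the paper's choice of thresholds: in the first application of Lemma \ref{simple-avg} take $\epsilon=c\eps$, $\epsilon'=c^{1/2}\eps$, obtaining primes with $f(p)\geq 1-c^{1/2}\eps$ carrying at least a $(1-c^{1/2})$-fraction of the weight (still $\gg\log Q$ by $|S_p|\geq p/10$ and $\sum_{p\in\mathscr{P}'}\frac{\log p}{p}\geq\frac13\log Q$); in the second take $\epsilon'=c^{1/4}\eps$, getting $H'_p\subset H$ with $|H'_p|\geq(1-c^{1/4})|H|$ and $|S_p\cap(S_p+h)|\geq(1-c^{1/4}\eps)|S_p|$ for $h\in H'_p$. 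Then Pollard is invoked with parameter $c^{1/4}\eps$, whose hypothesis $|S_p|<(1-2c^{1/4}\eps)p$ follows from $|S_p|\leq(1-\eps)p$ for any absolute $c\leq 1/16$, and the bad residue set has size $4c^{1/4}\eps p+1<\frac{1}{100}p$. The larger sieve with $\sigma_p=1/100$ and weight $\geq\frac{1}{40}\log Q$ then gives a denominator about $2.5(1-c^{1/4})^2\log Q-\log X>0$ for $Q=X^{1/2-c/2}$ and $c$ small, yielding $|H|\ll X^{1/2-c/2+o(1)}$, the desired contradiction. Everything else in your write-up (the weighting $\lambda(p)$, the pruning of primes $p\leq c^{-1/4}$, the translation of $H$ into $[1,2X+1]$, the final sieve computation) is sound; only the placement of $\eps$ needs to move from the mass fractions into the closeness thresholds so that $c$ stays absolute.
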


\begin{proof}
The proof is quite close to the proof of Lemma \ref{lem2.8}, so we shall give a fairly brief account. If the conclusion were false then we would have
$$ \sum_{h \in H} \sum_{p \in \mathscr{P}'} \frac{\log p}{p} \frac{|S_p \cap (S_p + h)|}{p} \geq (1 - c\eps) |H| \sum_{p \in \mathscr{P}'} \frac{\log p}{p} \frac{|S_p|}{p}. $$
In this case, if we let $\mathscr{P}$ denote the subset of primes $p \in \mathscr{P}'$ for which
$$ \sum_{h \in H} \frac{|S_p \cap (S_p + h)|}{p} \geq (1 - c^{1/2}\eps) |H| \frac{|S_p|}{p} , $$
then applying Lemma \ref{simple-avg} with $X = \mathscr{P}'$, $\lambda_{p} = \frac{\log p}{p^2} |H| |S_p|$ and $f(p) = |H|^{-1}\sum_{h \in H} \frac{|S_p \cap (S_p + h)|}{|S_p|}$ yields
$$ \sum_{p \in \mathscr{P}} \frac{\log p}{p} \frac{|S_p|}{p} \geq (1 - c^{1/2}) \sum_{p \in \mathscr{P}'} \frac{\log p}{p} \frac{|S_p|}{p} \geq \frac{1}{40}\log Q. $$
As in the proof of Lemma \ref{lem2.8}, it also follows from Lemma \ref{simple-avg} that, for all $p \in \mathscr{P}$, there is a set $H'_p \subset H$ with $|H'_p| \geq (1 - c^{1/4}) |H|$ such that $|S_p \cap (S_p + h)| \geq (1 - c^{1/4}\eps)|S_p|$ for all $h \in H'_p$. Finally, using Lemma \ref{add-comb} applied to $S_{p}$ (and with $\epsilon$ replaced by $c^{1/4}\epsilon$) it follows that $|H'_p \md{p}| \leq 4 c^{1/4} \eps p + 1$, and so all but $c^{1/4} |H|$ elements of $H$ reduce to lie in a set of size $4 c^{1/4} \eps p + 1 < \frac{1}{100}p$ modulo $p$, for all $p \in \mathscr{P}$, a set which satisfies $\sum_{p \in \mathscr{P}} \log p/p \geq \frac{1}{40}\log Q$. We may apply the larger sieve, Theorem \ref{larger-sieve}, to this situation, taking $\delta = 1 - c^{1/4}$ and $\sigma_p = 1/100$ for all $p \in \mathscr{P}$. This gives the bound
\[ |H| \ll \frac{Q}{100(1 - c^{1/4})^{2} (1/40)\log Q -\log X  }\] provided that the denominator is positive. If $c$ is sufficiently small then this will be so with our choice of $Q$, namely $X^{1/2 - c/2}$, and we get the bound $|H| \ll X^{1/2 - c/2 + o(1)}$. This is contrary to assumption.
\end{proof}

Now we can prove Theorem \ref{prog-thm}, by applying the foregoing lemmas repeatedly to the intersection of $A$ (and of the sets $S_{p}$) with shifted copies of itself. The key point here is that, since any subset of $A$ will lie in the arithmetic progression $S_{p}$ when reduced modulo $p$, we can use Lemma \ref{progr-energy} throughout this ``intersecting process'' to obtain a lower bound on additive energy. In particular, we don't need to keep track of any ``uniformity of fibres'' throughout the process. Eventually we will obtain a subset of $A$ that has cardinality quite close to $|A|$, but lies modulo $p$ in a multiply intersected copy of $S_p$ having size $< (1/2-c)p$ (for most primes $p$). The theorem will then follow immediately from the larger sieve.

\begin{proof}[Proof of Theorem \ref{prog-thm}]
We assume that $\eps$ is small, and that $X$ is sufficiently large in terms of $\eps$. This is certainly permissible for proving the theorem. We will prove that $|A| < X^{1/2-c(\eps)}$, where $c(\eps) = K^{-1/\eps}$ for a large absolute constant $K > 0$. Suppose, for a contradiction, that $|A| \geq X^{1/2-c(\eps)}$. We proceed iteratively, setting $A_0 = A$ and constructing a sequence of sets $A_i \subset A$, $i = 1,2,3,\dots$ such that
\begin{equation}\label{progr-size}
|A_i| \geq X^{1/2-3^{i}K^{-1/\eps}}.
\end{equation}
The sets $A_i$ will satisfy $A_i \md{p} \subset S_p^i$, where $S_p^i \subset S_p$, and where
\begin{equation}\label{progr-decay}
\sum_{p \leq Q} \frac{\log p}{p} \frac{|S_p^i|}{p} < (1-c\eps/2)^{i}(\log Q + O(1)).
\end{equation}
Here $c$ is the absolute constant from Lemma \ref{progr-goodh}, and $Q=X^{1/2-c/2}$. After $O(1/\eps)$ steps we will, in particular, have
$$ \sum_{p \leq Q} \frac{\log p}{p} \frac{|S_p^i|}{p} < (1/2 - c)\log Q , $$
and therefore, writing $\sigma^i_p := |S_p^i|/p$, we will have
$$ \sum_{p \leq Q} \frac{\log p}{\sigma_p^i p} \geq 4\log Q + O(1) - 4(\frac{1}{2} - c)\log Q = (2 + 4c)\log Q + O(1) , $$
as argued in the proof of Lemma \ref{lem2.7}. Using the larger sieve, this implies that $|A_i| \ll X^{1/2-c/2}$, which contradicts the lower bound (\ref{progr-size}) provided that $K$ was chosen large enough.

We will show how the set $A_{i+1}$ is obtained from $A_i$, and verify that it satisfies the size bound (\ref{progr-size}) and that its reductions modulo primes $p$ satisfy the bound (\ref{progr-decay}). Firstly, if $A_i \subset A$ satisfies (\ref{progr-size}) then Lemma \ref{progr-energy} implies that
$$ E(A_i,A_i) \gg \frac{\eps^{4} |A_i|}{X^{1/2} \log X} |A_i|^3 \gg \frac{\eps^{4} X^{-3^{i}K^{-1/\eps}}}{\log X} |A_i|^3 \geq 2X^{-2 \cdot 3^{i}K^{-1/\eps}} |A_i|^3 , $$
provided that $X$ is large enough in terms of $\eps$. Using Lemma \ref{lem3.5}, it follows that there is a set $H \subset [-X,X]$ such that $|H| \geq |A_i|X^{-2 \cdot 3^{i}K^{-1/\eps}} \geq X^{1/2-3^{i+1}K^{-1/\eps}}$, and such that
$$ |A_i \cap (A_i + h)| \geq X^{1/2-3^{i+1}K^{-1/\eps}} $$
for all $h \in H$. The set $A_{i+1}$ will be of the form $A_i \cap (A_i + h)$, for suitably chosen $h \in H$. Note that any such choice will indeed satisfy the size bound (\ref{progr-size}). In view of (\ref{progr-decay}), we may assume that the sets $S_p^i$ satisfy
$$ (1/2 - c)\log Q \leq (1-c\eps/2)^{i+1}(\log Q + O(1)) \leq \sum_{p \leq Q} \frac{\log p}{p} \frac{|S_p^i|}{p} < (1-c\eps/2)^{i}(\log Q + O(1)), $$
the lower bound holding because if it failed we could simply set $A_{i+1}=A_i$. Now let $\mathscr{P}'$ denote the set of primes $p \leq Q$ for which $|S_p^i| \geq \frac{1}{10}p$. We must have
\[ \sum_{p \in \mathscr{P}'} \frac{\log p}{p} \geq \frac{1}{3}\log Q \qquad \textrm{and} \qquad \sum_{p \in \mathscr{P}'} \frac{\log p}{p} \frac{|S_p^i|}{p} \geq \frac{1}{2} \sum_{p \leq Q} \frac{\log p}{p} \frac{|S_p^i|}{p}, \]
say, because otherwise the lower bound we just assumed would be violated. Thus we can apply Lemma \ref{progr-goodh}, deducing that for some $h \in H$ we have
$$ \sum_{p \in \mathscr{P}'} \frac{\log p}{p} \frac{|S_p^i \cap (S_p^i + h)|}{p} < (1 - c\eps) \sum_{p \in \mathscr{P}'} \frac{\log p}{p} \frac{|S_p^i|}{p} \leq \sum_{p \in \mathscr{P}'} \frac{\log p}{p} \frac{|S_p^i|}{p} - \frac{c\eps}{2}\sum_{p \leq Q} \frac{\log p}{p} \frac{|S_p^i|}{p} . $$
Finally, if we set $A_{i+1} = A_i \cap (A_i + h)$ for this choice of $h$, and correspondingly set $S_p^{i+1} = S_p^i \cap (S_p^i + h)$, then the upper bound (\ref{progr-decay}) will indeed be satisfied.
\end{proof}

\section{Robustness of the inverse large sieve problem}\label{stab-sec}

In this section we prove Theorem \ref{stability-thm}. Let us begin by recalling the statement. 

\begin{stability-thm-repeat}
Let $X_0 \in \N$, and let $\eps > 0$. Let $X \in \N$ be sufficiently large in terms of $X_0$ and $\eps$, and suppose that $H \leq X^{1/8}$. Suppose that $A,B \subset [X]$ and that $|A \md{p}| + |B \md{p}| \leq p+1$ for all $p \in [X_0, X^{1/4}]$. Then, for some absolute constant $c > 0$, one of the following holds:
\begin{enumerate}
\item \textup{(Better than large sieve)} Either $|A \cap [X^{1/2}]|$ or $|B \cap [X^{1/2}]|$ is $\leq X^{1/4 - c\eps^3}$;
\item \textup{(Behaviour with quadratics)} Given any two rational quadratics $\psi_A,\psi_B$ of height at most $H$, either $|A \setminus \psi_{A}(\Q)|$ and $|B\setminus \psi_{B}(\Q)| \leq HX^{1/2 - c}$, or else at least one of $|A \cap \psi_{A}(\Q)|$ and $|B \cap \psi_{B}(\Q)|$ is bounded above by $HX^{1/4 + \eps}$.
\end{enumerate}
\end{stability-thm-repeat}

The proof of Theorem \ref{stability-thm} requires a number of preliminary ingredients, which we assemble now. We start with some results concerning \emph{quasisquares}, that is to say squarefree integers that are quadratic residues modulo ``many'' primes (see, for example, \cite[Section 12.14]{dekoninck-luca}). The first result is not the one actually required later on (which is Lemma \ref{pseudosquares}), but it may be of independent interest and its proof motivates the proof of the result needed later.

\begin{lemma}\label{quasi-square}
Let $\eta > 0$, and suppose that $Y \geq Z > 2$. Suppose that $\mathscr{P} \subset [Z,2Z]$ is a set consisting of at least $\eta Z/\log Z$ of the primes in $[Z,2Z]$. Then the number of squarefree $q \in [1,Y]$ such that $\left( \frac{q}{p} \right) = 1$ for all $p \in \mathscr{P}$ is at most $8 (6 \log Y/\eta)^{3\log Y/\log Z}$.
\end{lemma}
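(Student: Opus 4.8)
The statement is a counting bound for squarefree $q \le Y$ that are quadratic residues modulo every prime in a moderately dense set $\mathscr{P} \subset [Z,2Z]$. I would prove this by the "box principle / pigeonhole on characters" argument (essentially Gallagher's larger sieve in multiplicative form, or a direct second-moment argument). Let $\mathscr{Q}$ denote the set of such $q$, and suppose for contradiction that $|\mathscr{Q}|$ is large. The key observation is that if $q_1, q_2 \in \mathscr{Q}$ then $q_1 q_2$ (or rather its squarefree kernel) is also a quadratic residue modulo every $p \in \mathscr{P}$, and more usefully: for every $p \in \mathscr{P}$, all of $\mathscr{Q}$ lies in the set of residues $a \bmod p$ with $\left(\frac{a}{p}\right) = 1$ (together with $0$), a set of size $(p+1)/2$. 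So $\mathscr{Q}$ is itself a set to which the large/larger sieve applies! But $\mathscr{Q} \subset [1,Y]$ with $Y$ possibly much bigger than $Z^2$, so the plain large sieve with moduli up to $Z$ only gives $|\mathscr{Q}| \ll Y/Z + Z$, which is useless when $Y$ is large.

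Instead I would run the larger-sieve computation directly, exploiting that the $q \in \mathscr{Q}$ occupy very few residue classes mod $p$ — not $(p+1)/2$, but rather we should compare pairs. Write $I := \sum_{p \in \mathscr{P}} \log p \sum_{\substack{q,q' \in \mathscr{Q} \\ q \ne q'}} 1_{p \mid q - q'}$. Wait — this isn't quite right either, since residue-class information is what the larger sieve uses, and being a QR doesn't pin $q$ to a class. The correct device is the \emph{multiplicative} one: consider, for each $p \in \mathscr{P}$, the character sum $\sum_{q \in \mathscr{Q}} \left(\frac{q}{p}\right) = |\mathscr{Q}|$ (since every term is $1$, assuming $p \nmid q$; the finitely many $q$ divisible by some $p \in \mathscr{P}$ are negligible — at most a bounded number since $q \le Y$ and $p \ge Z$, and one can discard them). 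Then by Cauchy–Schwarz and expanding, $|\mathscr{P}| \cdot |\mathscr{Q}| = \sum_{p} \sum_q \left(\frac{q}{p}\right) \le \big(\sum_q \sum_p \left(\frac{q}{p}\right)\big)$, and squaring and opening up the product over $q$ gives $\sum_{q, q'} \big| \sum_{p \in \mathscr{P}} \left(\frac{q q'}{p}\right) \big|$, which by Polya–Vinogradov or the large sieve for character sums is small unless $q q'$ is a perfect square. Here is where the bound $|\mathscr{Q}| \le 8(6\log Y / \eta)^{3 \log Y / \log Z}$ with its peculiar shape comes from: the diagonal-type contribution (pairs $q, q'$ with $qq' = \square$, i.e. $q = q'$ since both squarefree) is $|\mathscr{Q}| \cdot Z$, and the off-diagonal is controlled by a large-sieve inequality for the multiplicative characters $\chi_q = \left(\frac{\cdot}{\phantom{x}}\right)$ — more precisely one uses that the $q q'$ range over squarefree integers up to $Y^2$ and applies the large sieve inequality $\sum_{p \sim Z} \big|\sum_{n \le N} a_n \left(\frac{n}{p}\right)\big|^2 \ll (N + Z^2) \sum |a_n|^2$ with $a_n = r(n)$ the representation function of $n$ as $q q'$.

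Actually, re-examining the exponent $3\log Y/\log Z$, the cleaner route — and the one the excerpt hints at by saying "its proof motivates the proof of the result needed later" — is an \emph{iterated / tensor-power} pigeonhole. Take $k := \lceil \log Y / \log Z \rceil$ (so $Z^k \ge Y$). Consider $k$-fold products $q_1 \cdots q_k$; modulo each $p \in \mathscr{P}$ these lie in the QR subgroup. If $|\mathscr{Q}| =: m$ is large, then among the $m^k$ tuples, by pigeonhole many share the same residue vector $(q_1 \cdots q_k \bmod p)_{p \in \mathscr{P}}$; but the vector of QR-values doesn't determine the product. Hmm. Let me reconsider: the actual mechanism is that a squarefree $q$ that is a QR mod all $p \in \mathscr{P}$ satisfies, by quadratic reciprocity, that $p$ lies in certain residue classes mod $4q$ — so the set $\mathscr{P}$ is constrained to lie in a union of $\le \phi(4q)/2$ classes mod $4q$. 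This is backwards from what we want. So I'll commit to: \emph{first approach} — direct large sieve for the multiplicative characters applied to $\mathscr{Q}$, bounding $\sum_{p \in \mathscr{P}} |\sum_{q \in \mathscr{Q}} \left(\frac{q}{p}\right)|^2 = |\mathscr{P}| |\mathscr{Q}|^2$ from above by $(Y + Z^2) \cdot (\text{something})$ after expanding the outer square over pairs and using that $q q'$ is a nonsquare for $q \ne q'$, together with the Burgess/large-sieve bound on $\sum_{p \sim Z}\left(\frac{qq'}{p}\right)$.

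\textbf{Main obstacle.} The delicate point is getting the \emph{shape} of the bound exactly right — an expression like $8(6\log Y/\eta)^{3\log Y/\log Z}$ is not what a single application of the large sieve produces; it has the signature of an \emph{induction on $Y$} (or equivalently on the number of "levels" $\log Y/\log Z$), where at each level one uses the hypothesis that $\mathscr{P}$ has density $\eta/\log Z$ to gain a factor and reduce $Y$ by a power of $Z$. So I expect the real proof to be: show that if there are "too many" quasi-residues up to $Y$, then (by a pigeonhole grouping them according to their residues modulo a product of primes in $\mathscr{P}$, or according to multiplicative relations) one produces many quasi-residues up to $Y/Z$ (roughly), and recurse; the base case $Y \lesssim Z^{O(1)}$ is the plain large sieve giving $O(Z)$. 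Tracking the constants through $\log Y/\log Z$ iterations, each costing a factor like $6\log Y/\eta$, yields the stated bound. The technical nuisance will be handling the non-squarefree products and the $q$ divisible by primes in $\mathscr{P}$ cleanly, and making the pigeonhole step produce genuinely \emph{squarefree} survivors so the induction hypothesis applies; I'd handle squarefreeness by passing to squarefree kernels and noting this changes sizes by controllable amounts, and handle divisibility by discarding an $O(1)$-sized exceptional subset since each prime $p \ge Z > 2$ divides at most $Y/Z$ of the $q$'s but $q \le Y$ limits the number of prime factors, so only boundedly many $q$ are divisible by a given $p$ and in total the exceptional set has size $\ll |\mathscr{P}| \cdot Y/Z$ which must be absorbed — this may force the assumption $Y \ge Z$ to be used more carefully, possibly needing $Z$ not too small relative to $Y^{1/k}$.
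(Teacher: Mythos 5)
There is a genuine gap: the approach you finally commit to does not work in the regime where the lemma has content, and the iteration you then conjecture is not fleshed out (and is not the actual mechanism either). Expanding $\sum_{p \in \mathscr{P}} \bigl|\sum_{q \in \mathscr{Q}} \left( \frac{q}{p} \right)\bigr|^2 = |\mathscr{P}|\,|\mathscr{Q}|^2$ over pairs $q,q'$ requires cancellation in $\sum_{p \in [Z,2Z]} \left( \frac{qq'}{p} \right)$ for $qq'$ a non-square of size up to $Y^2$. P\'olya--Vinogradov needs $Z \gg Y$, Burgess needs roughly $Z \gg Y^{1/2+\varepsilon}$ (and is anyway delicate for sums over primes), while the lemma is supposed to be nontrivial for $Y$ any fixed power of $Z$. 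The large-sieve variant you write down, $\sum_{p \leq 2Z} \bigl|\sum_{q \leq Y} a_q \left( \frac{q}{p} \right)\bigr|^2 \ll (Y+Z^2)\sum_q |a_q|^2$ (or Heath-Brown's quadratic large sieve), only yields $|\mathscr{Q}| \ll Y^{1+o(1)}/Z$, which is far weaker than the stated bound $8(6\log Y/\eta)^{3\log Y/\log Z}$, essentially polylogarithmic when $Y$ is a fixed power of $Z$. Your ``main obstacle'' paragraph correctly senses that a single large-sieve application cannot produce this shape, but the proposed induction on $Y$ is only a guess, with no mechanism for the claimed gain at each step.

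The missing idea is to tensor the \emph{primes}, not the $q$'s, and then apply the multiplicative large sieve with the roles swapped. Let $k \geq 3$ be minimal with $Z^k > Y^2$, and let $\mathscr{S}$ be the set of products of $k$ distinct primes of $\mathscr{P}$, so $n \in [Z^k, 2^kZ^k]$ and $|\mathscr{S}| = \binom{|\mathscr{P}|}{k} \geq (\eta/(k\log Z))^k Z^k$. For every $q \in \mathscr{Q}$ and $n \in \mathscr{S}$ the Jacobi symbol satisfies $\left( \frac{q}{n} \right) = \prod_{p \mid n} \left( \frac{q}{p} \right) = 1$: there is no cancellation to be proved anywhere. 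Splitting $\mathscr{Q}$ according to $q \bmod 4$, each symbol $\left( \frac{q}{\cdot} \right)$ agrees on odd $n$ with a primitive real character of conductor at most $4q \leq 4Y$ (e.g.\ $\chi_{4q}$ when $q \equiv 2,3 \md{4}$). Now apply the dual multiplicative large sieve over this family of characters with coefficient vector $a_n = 1_{\mathscr{S}}(n)$: since the support length $2^kZ^k$ exceeds the square of the conductors, one gets $|\mathscr{Q}|\,|\mathscr{S}|^2 \leq (16Y^2 + 2^kZ^k)|\mathscr{S}|$, hence $|\mathscr{Q}| \leq 4(2k\log Z/\eta)^k$, and $k \leq 3\log Y/\log Z$ gives the stated bound; the exponential-in-$k$ cost is exactly the reciprocal density of $\mathscr{S}$ in $[Z^k, 2^kZ^k]$. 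You brushed against both ingredients ($k$-fold products, the multiplicative large sieve) but combined them the wrong way round, so the proposal as it stands does not prove the lemma. (Minor additional point: for a fixed $p \in \mathscr{P}$ the number of $q \leq Y$ with $p \mid q$ is about $Y/Z$, not $O(1)$; in the correct argument this issue never arises, since the hypothesis $\left( \frac{q}{p} \right) = 1$ already forces $p \nmid q$.)
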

\begin{proof}
First of all, let $\mathscr{Q}$ denote the set of all $q$ that are squarefree, lie in $[1,Y]$ and are a quadratic residue modulo all $p \in \mathscr{P}$. Each $q \in \mathscr{Q}$ is either congruent to $1 \md{4}$, or it is congruent to $2$ or $3 \md{4}$. Let us assume that at least half of the elements of $\mathscr{Q}$ are congruent to $2$ or $3 \md{4}$, and henceforth redefine $\mathscr{Q}$ to consist of those elements only, and aim to show that $|\mathscr{Q}| \leq 4 (6 \log Y/\eta)^{3\log Y/\log Z}$. (The proof when at least half of the elements are congruent to $1 \md{4}$ is essentially the same.)

Let $k \geq 3$ be the smallest integer for which $Z^k  > Y^2$. Then if $n \in [Z^k, 2^k Z^k]$ is any product of $k$ distinct primes from $\mathscr{P}$, and if $q \in \mathscr{Q}$, we have that the Jacobi symbol $\left( \frac{q}{n}  \right) = \prod_{p \mid n} \left( \frac{q}{p}  \right) = 1$. 
Let $\mathscr{S}$ be the set of all such $n$; then clearly
\begin{equation}\label{s-lower} |\mathscr{S}| = \binom{|\mathscr{P}|}{k} \geq \frac{|\mathscr{P}|^{k}}{k^{k}} \geq \left(\frac{\eta}{k\log Z}\right)^k Z^k . \end{equation}
(Of course this is only true if $|\mathscr{P}| \geq k$, but otherwise the bound we shall derive is trivial anyway.)

Finally, note that if $q$ is squarefree and congruent to $2$ or $3 \md{4}$, and if $n \in \mathscr{S}$ (so, in particular, $n$ is odd), then $\left( \frac{q}{n} \right) = \left( \frac{4q}{n} \right) = \chi_{4q}(n)$, where $\chi_{4q}(n)$ is a primitive character modulo $4q$. (It is the primitive quadratic character corresponding to the fundamental discriminant $4q$.) The multiplicative form of the large sieve \cite[Theorem 7.13]{iwaniec-kowalski} implies that
\[ \sum_{\substack{q \leq Q, \\ q \equiv 2 \text{ or } 3 \md 4, \\ q \; \text{squarefree}}} \left|\sum_{n \in \mathscr{S}} a_{n} \chi_{4q}(n) \right|^2 \leq (16Q^2 + N) \sum_{n \in \mathscr{S}} |a_{n}|^{2} , \]
for any set $\mathscr{S} \subset [N]$ and for any $Q$ and any coefficients $a_{n}$. Applying this with our particular set $\mathscr{S}$, and with $a_{n} = 1$, yields
\[ |\mathscr{Q}||\mathscr{S}|^2 \leq (16Y^2 + 2^k Z^k)|\mathscr{S}| < 2^{k+2} Z^k |\mathscr{S}|,\] and therefore by \eqref{s-lower}
\[ |\mathscr{Q}| \leq \frac{2^{k+2} Z^k}{|\mathscr{S}|} \leq 4 (\frac{2k\log Z}{\eta})^k.\] Noting that $k \leq \frac{2\log Y}{\log Z} + 1 \leq \frac{3\log Y}{\log Z}$, the result follows.
\end{proof}
\emph{Remarks.} The conclusion of Lemma \ref{quasi-square} is nontrivial when $Y$ is any fixed power of $Z$, and even for somewhat larger values of $Y$. It seems to us that the bound obtained here is stronger than could (straightforwardly) be obtained using the real character sum estimate of Heath-Brown \cite{hb}, which comes with an unspecified factor of $(QN)^{\eps}$.

Now we present the result we need later on. Of course more general statements are possible, but we leave their formulation as an exercise to the interested reader.

\begin{lemma}\label{pseudosquares}
Suppose that $Z \geq Y^{1/5}$ are large. The number of squarefree $q \in [1,Y]$ which are squares modulo 95\% of the primes in $[Z, 2Z]$ is $O(\log^{10} Z)$. Furthermore, all of these $q$ apart from $q=1$ are at least $Y^\varrho$, for a certain absolute constant $\varrho > 0$.
\end{lemma}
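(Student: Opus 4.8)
The plan is to run the argument of Lemma \ref{quasi-square}, coping with two new features: the set of primes modulo which $q$ is a square now depends on $q$ (we ask for only $95\%$ of them), and we must also extract the lower bound $q \geq Y^{\varrho}$.

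\emph{The count.} Following the proof of Lemma \ref{quasi-square}, I would split the exceptional $q$ according to $q \bmod 4$ and attach to each squarefree $q>1$ a primitive quadratic character $\chi$ of conductor at most $4q \leq 4Y$, with $\chi(n) = \left(\frac{q}{n}\right)$ for odd $n$ coprime to $q$. Write $M := \pi(2Z)-\pi(Z)$; for an exceptional $q$ the set $G_q$ of primes $p \in [Z,2Z]$ with $\left(\frac{q}{p}\right)=1$ has $|G_q| \geq 0.95M - \omega(q) \geq 0.949M$ for $Z$ large. Take $k := 10$ and let $\mathscr{S}$ be the set of products of $k$ distinct primes from $[Z,2Z]$, so that $\mathscr{S} \subset [N]$ with $N := (2Z)^{10}$ and $|\mathscr{S}| = \binom{M}{k} \gg (Z/\log Z)^{10}$. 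If all $k$ prime factors of $n\in\mathscr{S}$ lie in $G_q$ then $\chi(n)=1$, and the number of such $n$ is $\binom{|G_q|}{k}$, which is $\geq 0.59|\mathscr{S}|$ for $Z$ large since $0.949^{10}>0.59$. Hence for each exceptional $q$,
\[ \Big|\sum_{n \in \mathscr{S}} \chi(n)\Big| \;\geq\; (2\cdot 0.59 - 1)|\mathscr{S}| \;\geq\; \tfrac{1}{6}|\mathscr{S}| . \]
The crucial point is that $0.95^{10} > \tfrac12$, so the trivial coefficients $a_n \equiv 1$ already detect the exceptional $q$. Feeding $a_n \equiv 1$ into the multiplicative large sieve exactly as in Lemma \ref{quasi-square} gives
\[ \#\{\text{exceptional } q\} \cdot \big(\tfrac16|\mathscr{S}|\big)^2 \;\ll\; (Y^2 + N)|\mathscr{S}| \;\ll\; Z^{10}|\mathscr{S}| , \]
where we used $N = (2Z)^{10}$ and $Y^2 \leq Z^{10}$ — which is exactly the hypothesis $Z \geq Y^{1/5}$, and is also what makes $k=10$ large enough for the large sieve to be of any use. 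Dividing by $|\mathscr{S}|^2 \gg (Z/\log Z)^{20}$ gives $\#\{\text{exceptional } q\} \ll (\log Z)^{10}$.

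\emph{The lower bound.} Let $\theta_0 > 0$ be a small absolute constant, to be fixed below, and put $\varrho := \theta_0/6$. Suppose for contradiction that some exceptional $q_0$ satisfies $1 < q_0 < Y^{\varrho}$, with attached primitive quadratic character $\chi$ of conductor $D \leq 4q_0 < 4Z^{5\varrho} \leq Z^{\theta_0}$ for $Z$ large (using $Y \leq Z^5$). Since $q_0 < Z$, no prime of $[Z,2Z]$ divides $q_0$, so $\chi(p) = \pm 1$ there, and exceptionality together with the prime number theorem forces
\[ \sum_{Z < p \leq 2Z} \chi(p)\log p \;\geq\; 0.9\,M\log Z\,(1+o(1)) \;\geq\; 0.8\,Z . \]
On the other hand I would invoke the explicit formula for $\psi(x,\chi) = \sum_{n \leq x}\Lambda(n)\chi(n)$, the classical zero-free region, and a log-free zero-density estimate (Linnik) for $L(s,\chi)$, together with the standard observation that a possible exceptional (Siegel) real zero $\beta_1$ of the real character $\chi$ contributes the \emph{negative} term $-x^{\beta_1}/\beta_1$ to $\psi(x,\chi)$; this gives an upper bound $\psi(x,\chi) \leq \epsilon_0 x$, valid for all large $x$ when the conductor is $\leq x^{\theta_0}$, with $\epsilon_0 \to 0$ as $\theta_0 \to 0$. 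Since $\sum_{Z<p\leq 2Z}\chi(p)\log p = \psi(2Z,\chi) - \psi(Z,\chi) + O(\sqrt{Z}\log Z)$ (the differenced exceptional term being again negative), choosing $\theta_0$ so small that $\epsilon_0 < \tfrac1{10}$ makes this $< 0.8Z$, contradicting the previous display. Hence every exceptional $q>1$ is $\geq Y^{\varrho}$.

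\emph{Where the difficulty lies.} In the count the delicate constraint is that $k$ must be small enough for the good fraction $0.95^k$ to exceed $\tfrac12$ (so that the character sum over $\mathscr{S}$ survives even with constant coefficients) and simultaneously large enough for $Z^k \gg Y^2$ (so that the large sieve gains something, i.e.\ $k \geq 2\log Y/\log Z$). The hypothesis $Z \geq Y^{1/5}$, forcing $2\log Y/\log Z \leq 10$, together with the figure $95\%$, forcing $0.95^{10} \approx 0.60 > \tfrac12$, is precisely what allows $k=10$, and hence pins down the exponent $10$. For the lower bound the genuine obstruction is that the conductor of $\chi$ may be a fixed positive power of $Z$, so Siegel--Walfisz is useless and one really needs a zero-density estimate (plus the trick neutralising the Siegel zero of a real character); this is the only substantial piece of analytic input.
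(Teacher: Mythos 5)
Your proposal is correct and follows essentially the same route as the paper: the counting step (products of $k=10$ primes from $[Z,2Z]$, the numerical point $0.95^{10}>\tfrac12$ giving a character sum $\geq 0.18|\mathscr{S}|$ with constant coefficients, the multiplicative large sieve, and $Z^{10}\geq Y^2$ from $Z\geq Y^{1/5}$) is exactly the paper's argument. For the lower bound the paper simply cites Gallagher's density-estimate form of the prime number theorem for the real character $\chi = (\frac{q}{\cdot})$, namely $\sum_{Z\leq m\leq 2Z}\Lambda(m)\chi(m)\leq O\bigl(Z\max(e^{-c\sqrt{\log Z}},e^{-c\log Z/\log q})\bigr)$ after noting that the exceptional-zero term $\frac{Z^{\beta}-(2Z)^{\beta}}{\beta}$ is negative, which is precisely the estimate you propose to reassemble from the explicit formula, the classical zero-free region, a log-free zero-density bound and the negativity of the differenced Siegel-zero contribution.
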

\begin{proof}
The argument for the first part closely follows the preceding. Write $\mathscr{Q}$ for the set of all squarefree $q \in [1,Y]$ which are squares modulo at least 95\% of the primes $p \in [Z,2Z]$. Write $\mathscr{P}_q$ for the set of these primes; note carefully that $\mathscr{P}_q$ may depend on $q$.  Write $\mathscr{S}$ for the set of products of $10$ distinct primes from $[Z,2Z]$, and  write $\mathscr{S}_q$ for the set of products of $10$ distinct primes from $\mathscr{P}_q$. Note that $\left(\frac{q}{n} \right) = 1$ whenever $n \in \mathscr{S}_q$. Furthermore,
\[ |\mathscr{S}_q| = \binom{|\mathscr{P}_q|}{10} \geq (1 - o(1)) (1 - \frac{1}{20})^{10} \binom{|\mathscr{P}|}{10} > 0.59\binom{|\mathscr{P}|}{10} = 0.59|\mathscr{S}|\] for every $q \in \mathscr{Q}$ (the key point here is that $0.59 > 0.5$). It follows that for every $q \in \mathscr{Q}$ we have $\sum_{n \in \mathscr{S}} \left( \frac{q}{n} \right) \geq 0.18|\mathscr{S}|$. The proof now concludes as before.

To prove the second part, we use a form of the prime number theorem for the real character $\chi(m) = \left( \frac{q}{m} \right)$ (which, provided $q > 1$ is squarefree, is always a non-principal character of conductor at most $4q$). This tells us (see e.g. Theorem 7 in Gallagher's paper~\cite{gallagherdensity}) that 
\[ \sum_{Z \leq m \leq 2Z} \Lambda(m) \chi(m) = O(Z \max ( e^{-c\sqrt{\log Z}}, e^{-c\log Z/\log q} )) \] if $\chi$ has no exceptional zero, and
\[ \sum_{Z \leq m \leq 2Z} \Lambda(m) \chi(m) = \frac{Z^{\beta} - (2Z)^{\beta}}{\beta} + O(Z \max ( e^{-c\sqrt{\log Z}}, e^{-c\log Z/\log q} )),\] if $\chi$ does have an exceptional zero $\beta \in (\frac{1}{2}, 1)$. Either way, we have the 1-sided inequality
\[ \sum_{Z \leq m \leq 2Z} \Lambda(m) \chi(m) \leq O(Z \max ( e^{-c\sqrt{\log Z}}, e^{-c\log Z/\log q} ) ).\]
Since
\[ \sum_{Z \leq m \leq 2Z} \Lambda(m) = Z(1 + o(1))\] by the prime number theorem, it follows that if $q \leq Y^{\varrho}$ with $\varrho$ small enough then at most $95$\% of the primes in $[Z,2Z]$ are such that $(q | p) = \chi(p) = 1$ .\end{proof}

\begin{proof}[Proof of Theorem \ref{stability-thm}]  A key role will be played by primes $p$ that are close to $X^{1/4}$. It is convenient to introduce some terminology concerning them. Let $C$ be a large absolute constant to be specified later. We say that $p \sim X^{1/4}$ if $X^{1/4 - C\eps} < p < X^{1/4}$. Furthermore, we will say that a certain property holds ``for at least 1\% of primes $p \sim X^{1/4}$'' if the set $\mathscr{P}$ of primes for which this property holds satisfies
\[ \sum_{p \sim X^{1/4} : p \in \mathscr{P}} \frac{\log p}{p} \geq 0.01 \sum_{p \sim X^{1/4}} \frac{\log p}{p}.\] The weighting of $\log p / p$ is included with some later applications of the larger sieve in mind. We begin with some preliminary analysis using the larger sieve, strongly based on the work of Elsholtz \cite{elsholtz}. 

\begin{lemma}\label{large-sieve-app} Let $A, B$ be sets such that $|A \md{p}| + |B \md{p}| \leq p+1$ for all primes $p \in [X_0, X^{1/4}]$. Let $\eps > 0$, and suppose that $X$ is sufficiently large in terms of $X_0$ and $\eps$. Then either $A \cap [X^{1/2}]$ or $B \cap [X^{1/2}]$ has size at most $X^{1/4 - c\eps^3}$, or else for at least 99\% of primes $p \sim X^{1/4}$ we have both $|A \md{p}| \leq (\frac{1}{2} + \eps)p$ and $|B\md{p}| \leq (\frac{1}{2} + \eps) p$. We may take $c = 2^{-10}$.
\end{lemma}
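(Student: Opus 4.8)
The plan is to argue by contradiction, exploiting the large sieve and, crucially, re-running the computation behind it as in Lemma~\ref{lem1.1}. Suppose the second alternative fails and that, in addition, both $|A \cap [X^{1/2}]|$ and $|B \cap [X^{1/2}]|$ exceed $X^{1/4-c\eps^3}$; we aim for a contradiction. Failure of the second alternative means that the exceptional set $\mathscr{E}$ of primes $p \sim X^{1/4}$ with $\max(|A\md{p}|, |B\md{p}|) > (\tfrac12 + \eps)p$ has $\tfrac{\log p}{p}$-weight at least $\tfrac{1}{100}\sum_{p \sim X^{1/4}}\tfrac{\log p}{p}$, which by Mertens is $\gg C\eps\log X$. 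After possibly interchanging $A$ and $B$, the subset of $\mathscr{E}$ on which $|A\md{p}| > (\tfrac12 + \eps)p$ — call it $\mathscr{E}_A$ — still carries weight $\gg C\eps\log X$, and for every $p \in \mathscr{E}_A$ the hypothesis $|A\md{p}| + |B\md{p}| \le p+1$ forces $|B\md{p}| \le (\tfrac12 - \tfrac{\eps}{2})p$ for $p$ large: $B$ is genuinely sub-critically sieved at the primes of $\mathscr{E}_A$.

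The device that turns this into a power saving is to run the larger-sieve estimate for $B \cap [X^{1/2}]$ with cutoff $Q = X^{1/4-\tau}$, for $\tau$ a small fixed multiple of $c\eps^3$, rather than $Q = X^{1/4}$. Under the contradiction hypothesis $|B \cap [X^{1/2}]| \ge X^{1/4-c\eps^3}$, the error term $O(Q/|B\cap[X^{1/2}]|)$ appearing in the proof of Theorem~\ref{larger-sieve} (compare inequality~\eqref{to-use} and the proof of Lemma~\ref{lem1.1}) is $O(X^{c\eps^3-\tau}) = O(1)$, leaving the clean inequality $\sum_{p \le Q}\tfrac{\log p}{\sigma_p p} \le \tfrac12\log X + O(1)$, where $\sigma_p = |B\md{p}|/p$. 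Now primes $p \le Q$ with $|B\md{p}| \le \tfrac12(p+1)$ contribute $\tfrac{1}{\sigma_p} \ge 2$ and, being essentially all of the primes up to $Q$, produce a main term $2(\tfrac14-\tau)\log X = \tfrac12\log X - 2\tau\log X$; each prime of $\mathscr{E}_A$ below $Q$ contributes an additional $2\eps$ to $\tfrac1{\sigma_p}$, hence an extra $\gg \eps \cdot C\eps\log X = C\eps^2\log X$. As $C\eps^2\log X$ dominates the loss $2\tau\log X \asymp c\eps^3\log X$ once $C$ is a sufficiently large absolute constant (we may take $\eps$ small here, the statement being essentially vacuous when $\eps$ is bounded away from $0$ since $|A\md{p}| \le p$ trivially), the left side exceeds $\tfrac12\log X + O(1)$ — a contradiction. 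Equivalently, one concludes $|B\cap[X^{1/2}]| \ll X^{1/4-\tau+o(1)} \le X^{1/4-c\eps^3}$.

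The step I expect to be the real obstacle is the claim, used above, that $|B\md{p}| \le \tfrac12(p+1)$ for \emph{essentially all} primes $p \le Q$. All the hypothesis provides is that for each $p$ at least one of $|A\md{p}|, |B\md{p}|$ is $\le \tfrac12(p+1)$, and the "small" one could switch between $A$ and $B$ as $p$ varies. So a preliminary case split is unavoidable: if the primes at which $B$ is the larger set carry only $O(\eps^2\log X)$ weight, the computation above applies and bounds $|B\cap[X^{1/2}]|$; if instead $B$ is frequently the larger set, then $A$ is frequently genuinely small — below $\tfrac12(p+1)$, and below $(\tfrac12-\eps)p$ at the primes $p \sim X^{1/4}$ where $|B\md{p}| > (\tfrac12+\eps)p$ — and one bounds $|A\cap[X^{1/2}]|$ instead, now via the ordinary large sieve (Proposition~\ref{classicls}) with composite moduli assembled from such primes, supplemented by the same kind of $\eps$-gain. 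Checking that these cases are exhaustive and compatible — in particular that the "heavy" primes either cluster near $X^{1/4}$, where the $\eps$-gain lives, or are spread widely enough that composite moduli bite — is where the bookkeeping concentrates, and this is what fixes the admissible range $X^{1/4-C\eps} < p < X^{1/4}$ and the $1\%/99\%$ thresholds; the explicit constant $c = 2^{-10}$ then falls out of tracking constants through the argument.
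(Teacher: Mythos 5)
The gap you flag at the end is real, and your proposed repair does not close it. In your second branch you infer from ``$B$ is frequently the larger set'' that ``$A$ is frequently genuinely small'', but at a prime where $|B \md{p}| > \frac{1}{2}(p+1)$ the hypothesis only forces $|A\md{p}|$ below $\frac{1}{2}(p+1)$ by the (possibly arbitrarily tiny) excess of $B$, so those primes yield no quantitative saving for $A$ in either the large or the larger sieve; and the genuine $\eps$-gain you invoke for $A$ lives at primes $p \sim X^{1/4}$ with $|B\md{p}| > (\frac12+\eps)p$, a set which after your WLOG (passing to $\mathscr{E}_A$) may carry no weight at all. Concretely, take $\alpha_p = \frac12+\eps$, $\beta_p=\frac12-\eps$ for all $p \sim X^{1/4}$, and on a set of smaller primes of $\frac{\log p}{p}$-weight $\frac{1}{10}\log X$ let $\beta_p$ exceed $\frac12(1+\frac1p)$ by $o(1)$ with $\alpha_p$ correspondingly just below: your case split sends this to the second branch (the ``$B$ larger'' primes have large raw weight), yet $A$ admits no bound better than $X^{1/4}$; the correct conclusion here is a saving for $B$, because the \emph{loss} for $B$ at those primes is negligible even though their weight is large. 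So the split must be made on the size of the loss $\sum_p \frac{\log p}{p}\,(2-\beta_p^{-1})^{+}$ rather than on the weight of primes where $B$ is larger; but once you try to balance losses and gains for $A$ and $B$ simultaneously you run into the fact that they cancel to first order ($\frac{1}{1/2-\eps}-2 \approx 4\eps$ against $2-\frac{1}{1/2+\eps}\approx 4\eps$), and the surviving gain is second order, $\asymp \eps^2$ per unit weight on the exceptional primes -- which no bookkeeping of first-order quantities will detect. (A smaller slip: with $Q=X^{1/4-\tau}$ you need $\tau \geq c\eps^3$, not ``a small multiple'' of it, for the error term $O(X^{c\eps^3-\tau})$ to be $O(1)$; the paper takes $Q = X^{1/4-2c\eps^3}$.)

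The paper's proof sidesteps the dichotomy entirely by never committing to one set until the last step. Writing $\alpha_p, \beta_p$ for the densities, one uses the pointwise inequality $\frac1a+\frac1b \geq 4 + 2(1-2a)^2+2(1-2b)^2$ for $a+b\leq 1$ (proved from $\frac1x+\frac1{1-x} \geq 4+4(1-2x)^2$), so that \emph{every} deviation of $(\alpha_p,\beta_p)$ from $(\frac12,\frac12)$, no matter which set is the big one at that prime, increases $\sum_{p\leq X^{1/4-2c\eps^3}} \frac{\log p}{p}\bigl(\frac{1}{\alpha_p}+\frac{1}{\beta_p}\bigr)$; the failure of the $99\%$ alternative makes this sum exceed $(1+2^{-6}\eps^3 - 8c\eps^3)\log X$, whence WLOG the $\alpha$-half exceeds $(\frac12+2^{-8}\eps^3)\log X$, and a single application of the larger sieve to $A\cap[X^{1/2}]$ with $Q = X^{1/4-2c\eps^3}$ gives $|A\cap[X^{1/2}]| < X^{1/4-c\eps^3}$. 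This symmetric aggregation is exactly the missing ingredient: it captures the second-order term $\asymp\eps^2$ that your two-case argument loses.
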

\begin{proof} 
Write $\alpha_p := |A \md{p}|/p$, $\beta_p := |B \md{p}|/p$. Thus we are assuming that $\alpha_p + \beta_p \leq 1 + \frac{1}{p}$.
 Suppose the final statement of the lemma is false. Then 
\[ \sum_{p \sim X^{1/4}} \frac{\log p}{p} \big( (1 - 2\alpha_p)^2 + (1 - 2\beta_p)^2\big) \geq 2^{-5}\eps^2 \sum_{p \sim X^{1/4}} \frac{\log p}{p} \geq 2^{-5} \eps^3 \log X.\]
If $c < 2^{-8}$, we can remove the contribution from $X^{1/4 - 2c\eps^3} \leq p \leq X^{1/4}$ trivially to get
\[ \sum_{p < X^{1/4 - 2c\eps^3}} \frac{\log p}{p}\big( (1 - 2\alpha_p)^2 + (1 - 2\beta_p)^2\big) \geq 2^{-6} \eps^3 \log X.\]
We claim that if $a,b$ are positive real numbers with $a + b \leq 1$ then 
\[ \frac{1}{a} + \frac{1}{b} \geq 4 + 2(1 - 2a)^2 + 2(1 - 2b)^2.\]
To see this, apply the inequality
\[ \frac{1}{x} + \frac{1}{1 - x} = 4 + \frac{(1 - 2x)^2}{x (1 - x)} \geq 4 + 4(1 - 2x)^2\] with $x = a$ and $x = b$ in turn, and add the results.

Applying this together with the above, we obtain
\[ \sum_{X_0 \leq p \leq X^{1/4 - 2c\eps^3}} \frac{\log p}{p} (\frac{1}{\alpha_p} + \frac{1}{\beta_p}) \geq (1 - 8c\eps^3)\log X + 2^{-5}\eps^3 \log X - O(1) \geq (1-  8c\eps^3 + 2^{-6} \eps^3)\log X .\]
Here we used the fact (an estimate of Mertens) that $\sum_{X_0 \leq p \leq Z} \frac{\log p}{p} = \log Z + O_{X_0}(1)$. Note also that we only have $\alpha_p + \beta_p \leq 1 +\frac{1}{p}$, and not $\alpha_p + \beta_p \leq 1$; the introduction of the $O(1)$ term takes care of this as well, the full justification of which we leave to the reader\footnote{We are working with the condition $|A \md{p}| + |B \md{p}| \leq p+1$, rather than the cleaner condition $|A \md{p}| + |B \md{p}| \leq p$, so that we can formulate Theorem \ref{stability-thm-symmetric} to include the case in which $\A$ is the set of values of a quadratic. Note, however, that in this case Lemma \ref{large-sieve-app} is vacuous anyway. Therefore this small point really can be ignored.}.
Without loss of generality the contribution from the $\alpha_p$ is at least that from the $\beta_p$, so 
\[ \sum_{X_0 \leq p \leq X^{1/4 - 2c\eps^3}} \frac{\log p}{p} \frac{1}{\alpha_p} \geq  (\frac{1}{2}  - 4c\eps^3+ 2^{-7}\eps^3)\log X > (\frac{1}{2} + 2^{-8}\eps^3) \log X\] if $c \leq 2^{-10}$.
Then, however, the larger sieve implies that
\[ |A \cap [X^{1/2}]| \ll \frac{X^{1/4 - 2c\eps^3}}{\eps^3 \log X}  < X^{1/4 - c\eps^3},\] and the result follows.
\end{proof}

Suppose now that the hypotheses are as in Theorem \ref{stability-thm}. Replace $\eps$ by $\eps/2C$ (the statement of the theorem does not change). Let $\psi_A,\psi_B$ be rational quadratics of height at most $H$. If option (ii) of the theorem does not hold then at least $HX^{1/4 + 2C\eps}$ elements of $A$ lie in $\psi_{A}(\Q)$ and at least $HX^{1/4 + 2C\eps}$ elements of $B$ lie in $\psi_{B}(\Q)$. Suppose also that option (i) of Theorem \ref{stability-thm} does not hold. Then we may apply Lemma \ref{large-sieve-app} to conclude that both $|A \md{p}|$ and $|B \md{p}|$ are $\leq (\frac{1}{2} + \eps)p$ for at least 99\% of all primes $p \sim X^{1/4}$. (We urge the reader to recall the special meaning of this notation.) Using this information together with the fact that $|A \md{p}| + |B \md{p}| \leq p+1$ for all $p \sim X^{1/4}$, we will deduce that in fact
\begin{equation}\label{weak} |A \setminus \psi_{A}(\Q)|, |B \setminus \psi_{B}(\Q)| \ll HX^{1/2 - c},  \end{equation} this being the other conclusion of Theorem \ref{stability-thm} (ii).  It suffices to prove this for $A$, the proof for $B$ being identical. Write $\psi = \psi_{A}$, and suppose that $p \sim X^{1/4}$. Set
\[ T_p := A \md{p} \cap (\psi(\Q) \cap \Z)  \md p,\]
\[ U_p := A \md{p} \setminus (\psi(\Q) \cap \Z) \md{p}.\]
We know that $|A \md{p}| \leq (\frac{1}{2} + \eps) p$ for at least 99\% of all primes $p \sim X^{1/4}$. For these primes, then,
\[ |T_p| + |U_p| \leq (\frac{1}{2} + \eps)p.\]
We claim that $|U_p| \leq 2\eps p$ for at least 98\% of all primes $p \sim X^{1/4}$. If this failed, we would have \begin{equation}\label{tp-bound}|T_p| \leq (\frac{1}{2} - \eps) p\end{equation} for at least $1\%$ of the primes $p \sim X^{1/4}$. Write $\psi(x) = \frac{1}{d}(a x^2 + bx + c)$ with $a,b,c,d$ having no common factor and $|a|, |b|, |c|, |d| \leq H$, and note that $\psi^{-1}(\Z) \subset \frac{1}{a}\Z$. 
Note furthermore that
\begin{equation}\label{35} \{ x \in \Z : \psi(\frac{x}{a}) \in A \cap \psi(\Q)\} \subset \bigcap_{p \sim X^{1/4}} \{ x \in [-C_1HX^{1/2}, C_1HX^{1/2}] : \psi(\frac{x}{a}) \md{p} \in T_p\} ,\end{equation}  where $C_1$ is some absolute constant. If $p \sim X^{1/4}$, the condition that $\psi(\frac{x}{a}) \md{p} \in T_p$ forces $x \md{p}$ to lie in some set $S_p \subset \Z/p\Z$ of residue classes with $|S_p| \leq 2|T_p|$. We now have a large sieve problem to which Proposition \ref{classicls} may be applied. If $p$ is such that \eqref{tp-bound} holds then we have $|S_p^c|/|S_p| \geq \eps$, and so
\[ \sum_{q \leq X^{1/4}} \mu^2(q) \prod_{p | q} \frac{|S_p^c|}{|S_p|} \geq \sum_{p \sim X^{1/4}} \frac{|S_p^c|}{|S_p|} \gg \eps \frac{X^{1/4 -C\eps}}{\log X}.\] Here, $X^{1/4 - C\eps}/\log X$ is a crude lower bound for the size of a set of primes constituting 1\% of all $p \sim X^{1/4}$, this lower bound being attained when all the primes congregate at the bottom of the interval $X^{1/4 - C\eps} \leq p \leq X^{1/4}$.
It follows from \eqref{35} and Proposition \ref{classicls} that $|A\cap \psi(\Q)|  < HX^{1/4 + 2C\eps}$ if $X$ is large enough, contrary to assumption. 

Now let us write $A= A_{\psi} \cup E$, where $A_{\psi}$ consists of those $x \in A$ for which $x \md{p} \in T_p$ for at least $97\%$ of $p \sim X^{1/4}$, and
$E$ consists of those $x \in A$ such that $x \md{p} \in U_p$ for at least $3\%$ of $p \sim X^{1/4}$. The idea here is that $A_{\psi}$ satisfies a large number of local conditions suggesting that its elements lie in $\psi(\Q)$. We would like to relate $A_{\psi}$ to $A \cap \psi(\Q)$, and show that $E$ is small. With this idea in hand, we can divide the task of proving \eqref{weak} into two subclaims, namely
\begin{equation}\label{claim} |A_{\psi} \setminus \psi(\Q)| \ll H X^{1/2 - c} \qquad \mbox{and} \qquad |E| \ll X^{1/4} .\end{equation}
Of course, we could tolerate a weaker bound for $|E|$, but as it turns out we need not settle for one.

We start with the first claim, which is quite straightforward given results we established earlier. Let $\Delta$ be the discriminant of $\psi$. If $x$ is an integer then so is $4ad x + \Delta d^2$, and furthermore if $x =\psi(n)$ then $4a d x + \Delta d^2 = (2a n + b)^2$. Therefore if $x \in A_{\psi}$ then $4a d x + \Delta d^2$ is an integer which is a square modulo $p$ for at least 97\% of all $p \sim X^{1/4}$. By a simple averaging argument, $4ad x + \Delta d^2$ is a square modulo at least 95\% of all $p\in [Z,2Z]$ for some $Z \sim X^{1/4}$. It follows from Lemma \ref{pseudosquares} that \begin{equation}\label{false-eq}4a d x + \Delta d^2 = n_i s^2,\end{equation} where $s \in \Z$ and $n_i$ is one of at most $O(\log^{10} X)$ squarefree integers, with $n_1 = 1$ and $n_i \geq X^{\varrho}$ if $i \geq 2$, where $\varrho > 0$ is an absolute constant. The number of $x \in [X]$ for which \eqref{false-eq} holds for a given $i$ is $\ll H\sqrt{X/n_i}$, and so the number of $x$ for which this holds for \emph{some} $i \geq 2$ is $\ll H \log^{10} X \cdot X^{(1- \varrho)/2} \ll H X^{1/2-\varrho/4}$. If $i = 1$, so that $n_1 = 1$, then $4a d x + \Delta d^2$ is a square and so $x \in \psi(\Q)$. This concludes the proof of the first bound in \eqref{claim}.

We turn now to the proof of the second bound in \eqref{claim}. 
Recall first of all that $|U_p| \leq 2\eps p$ for at least 98\% of all $p \sim X^{1/4}$, and also that for every $x \in E$ we have $x \md{p} \in U_p$ for at least 3\% of all $p \sim X^{1/4}$. For every $x \in E$, both of these events occur for at least 1\% of all $p \sim X^{1/4}$. 

Write $E_p$ for the subset of $E$ whose elements belong to $U_p$ modulo $p$. By the preceding facts we have
\[ \sum_{\substack{p \sim X^{1/4} \\ |U_p| \leq 2\eps p}} \frac{\log p}{p}|E_p|  = \sum_{x \in E} \sum_{\substack{p \sim X^{1/4} \\ |U_p| \leq 2\eps p}} \frac{\log p}{p}1_{x \md{p} \in U_p}  \geq \frac{1}{100}|E|\sum_{p \sim X^{1/4}} \frac{\log p}{p}. \]
Writing $\mathscr{P} := \{ p \sim X^{1/4} : |U_p| \leq 2\eps p  \; \text{and} \; |E_p| \geq \textstyle\frac{1}{200}|E| \}$, it follows that
\[ \sum_{p \in \mathscr{P}} \frac{\log p}{p} \geq \frac{1}{200} \sum_{p \sim X^{1/4}} \frac{\log p}{p}.\] Applying the larger sieve (that is Theorem \ref{larger-sieve}) with the choices $\delta = \frac{1}{200}$, $Q = X^{1/4}$ and $\sigma_{p} = 2\eps$, we obtain $|E| \ll X^{1/4}(\frac{1}{2^{20}\eps}\sum_{p \sim X^{1/4}} \frac{\log p}{p} - \log X)^{-1}$, provided that the term in parentheses is positive. That term is $> (2^{-21} C - 1)\log X$, which is positive if $C > 2^{22}$ (say). Thus we get the bound $|E| \ll X^{1/4}$.
This completes the proof of \eqref{claim}, and hence \eqref{weak} and Theorem \ref{stability-thm}.\end{proof}

We turn now to the proof of Theorem \ref{stability-thm-symmetric}, the stability theorem for a single infinite set $\A$. Again, we begin by recalling the statement.

\begin{stability-thm-symmetric-repeat}
Suppose that $\A$ is a set of positive integers and that $|\A \md{p}| \leq \frac{1}{2}(p+1)$ for all sufficiently large primes $p$. Then one of the following options holds:
\begin{enumerate}
\item \textup{(Quadratic structure)} There is a rational quadratic $\psi$ such that all except finitely many elements of $\A$ are contained in $\psi(\Q)$;
\item \textup{(Better than large sieve)} For each integer $k$ there are arbitrarily large values of $X$ such that $|\A[X]| < \frac{X^{1/2}}{\log^k X}$;
\item \textup{(Far from quadratic structure)} Given any rational quadratic $\psi$, for all $X$ we have $|\A[X] \cap \psi(\Q)| \leq X^{1/4 + o_{\psi}(1)}$.
\end{enumerate}
\end{stability-thm-symmetric-repeat}
\begin{proof}
Suppose that neither item (ii) nor item (iii) holds. Then there is an $\eps > 0$ and a rational quadratic $\psi$ such that $|\A[X] \cap \psi(\Q)| > X^{1/4 + \eps}$ for arbitrarily large values of $X$. For any such $X$ we may apply Theorem \ref{stability-thm} with $A = \A[X]$ to conclude that either option (ii) of our present theorem holds, or else for infinitely many $X$ we have $|\A[X] \setminus \psi(\Q)| \ll X^{1/2 - c}$.  (We note in passing that Lemma \ref{large-sieve-app} is redundant inside the proof of Theorem \ref{stability-thm} in this setting, being trivially true.)  We will deduce from this and further applications of the fact that $|\A \md{p}| \leq \frac{1}{2}(p+1)$ for $p$ sufficiently large that either (i) or (ii) of Theorem \ref{stability-thm-symmetric} holds. 
Let $\tilde \psi$ be a rational quadratic satisfying the conclusions of Lemma \ref{tedious-local-lemma}, thus $\psi(\Q) \cap \Z \subset \tilde\psi(\Z)$ and for all sufficiently large primes $p$ the reductions $\md{p}$ of $\psi(\Q) \cap \Z$ and of $\tilde\psi(\Z)$ are the same. Suppose that option (ii) of Theorem \ref{stability-thm-symmetric} does not hold for the infinitely many values of $X$ for which we have $|\A[X] \setminus \psi(\Q)| \ll X^{1/2 - c}$. Then there is some integer $k$ such that (letting $X$ range through these values)
\begin{equation}\label{weak2} \lim\sup_{X \rightarrow \infty}  X^{-1/2}\log^k X|\A[X] \cap \psi(\Q)| = \infty.\end{equation} We claim that this implies statement (i) of Theorem \ref{stability-thm}, and in fact the stronger conclusion $|\A \setminus \psi(\Q)\big| \leq k+1$. Suppose this statement is false. Then there are elements $x_1,\dots,x_{k+1}$ in $\A$ but not in $\psi(\Q)$. Since $x$ lies in $\psi(\Q)$ if and only if $4ad x + \Delta d^2$ is the square of a rational number, it follows that none of $4ad x_i + \Delta d^2$ is a square. Set $m_i := 4ad x_i + \Delta d^2$, and suppose that $p$ is a prime such that $(m_i | p ) = -1$. If $p$ is sufficiently large then $x_i \notin (\psi(\Q) \cap \Z) \md{p}$ and hence $x_i \notin \tilde\psi(\Z) \md{p}$. 

For each prime $p$, let $k(p)$ be the number of indices $i \in \{1,\dots, k+1\}$ such that $(m_i | p) = -1$. From the above reasoning and the assumption that $x_i \in \mathscr{A}$ it follows that $\A \cap \tilde\psi(\Z) \md{p}$ must occupy a set of size at most $\frac{1}{2}(p+1) - k(p)$ for all sufficiently large primes $p$. Define a set $\mathscr{B} \subset \Z$ by $\A \cap \tilde\psi(\Z) = \tilde\psi(\mathscr{B})$. Thus $|\tilde\psi(\mathscr{B}) \md{p}| \leq \frac{1}{2}(p+1) - k(p)$ for all sufficiently large primes $p$, which implies that $|\mathscr{B} \md{p}| \leq p - 2k(p) + 1$. Note also that $\A[X] \cap \psi(\Q) \subset \tilde\psi{(\mathscr{B} \cap [c_1 \sqrt{X}, c_2 \sqrt{X}])}$ for some constants $c_1,c_2$ depending only on $\tilde\psi$. We may now apply Lemma \ref{small-from-large} to the set $\mathscr{B}$. In that lemma we may take $w(p) = 2k(p) - 1$, where $k(p)$ is the number of $i$ for which $(m_i | p) = -1$, or equivalently (if $p > 2$) for which $\chi_i(p) = -1$ where $\chi_i(n) = (4m_i | n)$ is a real Dirichlet character and $( \, | \, )$ denotes the Kronecker symbol.  Thus $k(p) = \frac{1}{2}(k+1) - \frac{1}{2}\sum_{i=1}^{k+1} \chi_i(p)$, and so $w(p) = k -\sum_{i = 1}^{k+1} \chi_i(p)$. The conditions of Lemma \ref{small-from-large} are easily satisfied by the prime number theorem for characters with a fairly crude error term. It follows from Lemma \ref{small-from-large} and the above discussion that 
\[ |\mathscr{A}[X] \cap \psi(\Q)| \leq |\mathscr{B} \cap [c_1 \sqrt{X}, c_2 \sqrt{X}]| \ll X^{1/2}(\log X)^{-k},\] contrary to \eqref{weak2}. \end{proof}

\section{Composite numbers in $\A + \B$}

Recall from the introduction the following conjecture of ``inverse large sieve'' type. 

\begin{stability-conj-repeat}
Let $X_0 \in \N$, and let $\rho > 0$. Let $X \in \N$ be sufficiently large in terms of $X_0$ and $\rho$. Suppose that $A,B \subset [X]$ and that $|A \md{p}| + |B \md{p}| \leq p+1$ for all $p \in [X_0, X^{1/4}]$. Then there exists a constant $c = c(\rho) > 0$ such that one of the following holds:
\begin{enumerate}
\item \textup{(Better than large sieve)} Either $|A \cap [X^{1/2}]|$ or $|B \cap [X^{1/2}]|$ is $\leq X^{1/4 - c}$;
\item \textup{(Quadratic structure)} There are two rational quadratics $\psi_A,\psi_B$ of height at most $X^{\rho}$ such that $|A \setminus \psi_{A}(\Q)|$ and $|B\setminus \psi_{B}(\Q)| \leq X^{1/2 -c}$.
\end{enumerate}
\end{stability-conj-repeat}

Our aim in this section is to prove Theorem \ref{ils-ostmann}, which is the following statement.

\begin{ils-ostmann-repeat}
Assume Conjecture \ref{stability-conj}. Let $\A, \B$ be two sets of positive integers, with $|\A|, |\B| \geq 2$, such that $\A + \B$ contains all sufficiently large primes. Then $\A + \B$ also contains infinitely many composite numbers. 
\end{ils-ostmann-repeat}

This follows quite straightforwardly from the following fact.

\begin{lemma}\label{lem6.2}
There is $\rho > 0$ with the following property. Suppose that $\psi_{A},\psi_{B}$ are two rational quadratics of height at most $X^{\rho}$ and that $A \subset \psi_{A}(\Q) \cap [X]$ and $B \subset \psi_{B}(\Q) \cap [X]$ are sets of positive integers. Suppose that $A + B$ contains no composite number. Then at least one of $A$ and $B$ has cardinality $\ll X^{1/3}$.
\end{lemma}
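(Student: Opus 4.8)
I would exploit, for each prime $p$, what the hypothesis ``$\A+\B$ has no composite'' says modulo $p$, together with the fact that $A$ and $B$ lie inside the value sets of fixed quadratics modulo $p$. \emph{Reductions first.} Using the simple facts about rational quadratics recorded in Appendix~\ref{rat-quad-app}, replace $\psi_A,\psi_B$ by integer--valued quadratics $\tilde\psi_A,\tilde\psi_B$ of height $\ll H^{O(1)}$ with $\psi_A(\Q)\cap\Z\subset\tilde\psi_A(\Z)$ (similarly for $B$), so that $A=\tilde\psi_A(S_A)$ and $B=\tilde\psi_B(S_B)$ for sets of integers $S_A,S_B$ each contained in an interval of length $\ll H^{O(1)}X^{1/2}=X^{1/2+O(\rho)}$; we may assume both quadratics open upwards, for otherwise $A$ or $B$ is confined to a bounded range of arguments and is $\ll H\ll X^{1/3}$. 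Discard from $A,B$ the (at most $X^{1/3}$, by the large sieve) elements below $X^{1/3}$, obtaining $A',B'$. Then for every prime $p\le X^{1/3}$ and every $a\in A'$, $b\in B'$ we have $a+b>2X^{1/3}\ge 2p$, so $p\mid a+b$ would make $a+b$ a composite multiple of $p$. Hence $A'\bmod p$ and $-(B'\bmod p)$ are disjoint in $\Z/p\Z$, and (excluding the finitely many $p$ dividing leading coefficients or discriminants) $A'\bmod p\subset V_A(p)$ and $B'\bmod p\subset V_B(p)$, where $V_A(p),V_B(p)$ are the ``dilated--translated squares'' sets $\{\lambda y^2+\mu:y\in\Z/p\Z\}$ of size $(p+1)/2$ that are the images of $\tilde\psi_A,\tilde\psi_B$.

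\textbf{The geometric input.} The point is that two such sets are in general position. Counting points on the affine conic $\lambda_A y^2+\lambda_B z^2=\mu$ shows $|V_A(p)\cap(-V_B(p))|=\tfrac14 p+O(1)$ for all large $p$, \emph{unless} the vertex values of $\tilde\psi_A$ and $-\tilde\psi_B$ coincide (a condition not depending on $p$); in that degenerate case the conic splits into two lines and, for the half of primes $p$ for which $-\lambda_A/\lambda_B$ is a quadratic residue, one has $V_A(p)=-V_B(p)$ and hence the much stronger conclusion below. In all cases, for a positive proportion of primes $p$ one gets $|V_A(p)\cup(-V_B(p))|\le(\tfrac34+o(1))p$, and therefore, using disjointness,
\[ |A'\bmod p|+|B'\bmod p|\ \le\ \bigl(\tfrac34+o(1)\bigr)p \]
(and $\le(\tfrac12+o(1))p$, i.e. in fact $|A'\bmod p|\le(\tfrac14+o(1))p$, in the degenerate case). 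One should also record the asymmetric improvement: if $B'$ occupies almost all of $V_B(p)$ then $A'\bmod p\subset V_A(p)\setminus\bigl(V_A(p)\cap(-V_B(p))\bigr)$ already has size $\le(\tfrac14+o(1))p$.

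\textbf{The sieve step.} To feed this back I would inspect the larger--sieve argument, exactly as in the proof of Lemma~\ref{lem1.1}: from $\sum_{p\le Q}\sum_{a}|A'(a,p;X)|^2\log p\le|A'|^2\log X+O(|A'|Q)$ and Cauchy--Schwarz one has $\sum_{p\le Q}\frac{\log p}{|A'\bmod p|}\le\log X+O(Q/|A'|)$, and symmetrically for $B'$. Adding these and using $\tfrac1a+\tfrac1b\ge\tfrac4{a+b}$ with the displayed bound yields $\bigl(\tfrac{16}{3}+o(1)\bigr)\sum_{p\le Q,\ p\ \mathrm{good}}\frac{\log p}{p}\le 2\log X+O\!\left(\tfrac{Q}{\min(|A'|,|B'|)}\right)$, which already forces $\min(|A|,|B|)$ to be a small power of $X$. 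To reach the exponent $1/3$ one uses the asymmetric improvement instead: apply the large sieve (Proposition~\ref{classicls}) to $S_{B'}$ to conclude that $B'$ fills almost all of $V_B(p)$ for almost all primes $p$ in a suitable range, and then apply the larger sieve to $A'$ alone with $\sigma_p=\tfrac14$ on the good primes (together with $\sigma_p\le\tfrac12$ elsewhere, from the quadratic structure of $A'$). If $|B|>X^{1/3}$ this should close up; if $|B|\le X^{1/3}$ we are done directly.

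\textbf{Main obstacle.} I expect the real work to be the quantitative balancing of prime ranges. The primes $p$ for which one has usable residue information on $B'$ are limited \emph{both} by $|B|$ itself (one needs $p\lesssim|B|$ for $B'$ even to be able to fill $V_B(p)$) and by the $X^{1/4}$ barrier beyond which the large sieve applied to $S_{B'}$ (in an interval of length $X^{1/2+O(\rho)}$) loses force, while the larger sieve only becomes effective once the prime range exceeds roughly the square of the bound one is trying to prove; the whole argument has to be arranged so that these two ranges just overlap, and the constants turn out to be essentially tight. A secondary but genuine technical point is the small primes, where $B'$ can legitimately avoid a positive proportion of residues (so those primes must be shown to contribute negligibly to the weighted prime sums), and the separate treatment of the degenerate geometric configuration. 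An alternative route, bypassing the geometry, is to choose $k\asymp\log X/\log\log X$ elements $b_1,\dots,b_k\in B$ with $L(1,\chi_{\Delta_i})\gg 1$ for each $i$ (possible because small $L$-values are rare and $|B|$ is large, where $\Delta_i$ is the discriminant of $\tilde\psi_A+b_i$) and apply an upper--bound sieve to the $k$ quadratics $\tilde\psi_A(m)+b_i$; careful bookkeeping of the sieve dimension and of the singular series gives $|A|\ll X^{o(1)}$ when $|B|>X^{1/3}$.
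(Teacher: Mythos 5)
Your reductions, the disjointness of $A'\bmod p$ and $-(B'\bmod p)$, and the computation $|V_A(p)\cap(-V_B(p))|=\tfrac14p+O(1)$ (with the degenerate split case) are all sound, and your asymmetric dichotomy via the large sieve on $S_{B'}$ is the right kind of input. The genuine gap is in the final sieve step: the larger sieve applied to $A'\subset[X]$ with $\sigma_p=\tfrac14$ cannot close at the exponent $1/3$. The dichotomy only certifies ``$B'$ fills $V_B(p)$'' for primes $p$ at which the large-sieve bound $\ll (X^{1/2+O(\rho)}+Z^2)\log Z/(\delta\eta Z)$ contradicts $|B|\gg X^{1/3}$, i.e.\ only for $p$ in roughly $[X^{1/6},X^{1/3}]$; since the larger sieve needs the weighted sum $\sum\log p/(\sigma_p p)$ to exceed $\log X$, the best you can assemble is about $2\cdot\tfrac16\log X$ (from $\sigma_p\le\tfrac12$ below $X^{1/6}$) plus $4\cdot\tfrac16\log X$ (from $\sigma_p\approx\tfrac14$ on $[X^{1/6},X^{1/3}]$) $=\log X$ exactly, and every loss ($\rho$, $\delta$, $\eta$, the bad primes, the edge of the range) pushes it below. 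So this route yields only $\min(|A|,|B|)\ll X^{1/3+\eps}$; likewise your symmetric version gives exponent $3/8$, and combining the two gives about $7/20$ --- all strictly above the stated $X^{1/3}$ (though any exponent below $1/2$ would still suffice for Theorem \ref{ils-ostmann}). The repair stays within your own framework: once you know $|A'\bmod p|\le(\tfrac14+o(1))p$ for most $p$ in a single range $p\asymp X^{1/4}$, the preimage set $S_{A'}$ (which lies in an interval of length $\ll X^{1/2+O(\rho)}$) misses at least $(\tfrac12-o(1))p$ residue classes mod such $p$, and the classical large sieve (Proposition \ref{classicls}) applied to $S_{A'}$ gives $|A|\ll X^{1/4+o(1)}\ll X^{1/3}$; equivalently, run your dichotomy on $S_{A'}$ as well and note that ``$A'$ fills $V_A(p)$'' is incompatible with the quarter-density bound. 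Your alternative route via $k\asymp\log X/\log\log X$ shifted quadratics and an upper-bound sieve of growing dimension is too sketchy to assess (level-of-distribution and singular-series control are not addressed).

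For comparison, the paper's proof works with the preimage sets from the start and never uses the larger sieve here: it completes the square modulo $p$ to write $\psi_A(x_A/a_A)+\psi_B(x_B/a_B)$ as $\frac{1}{a_Ad_A}((x_A+c_1)^2-(mx_B+c_2)^2+c_3)$ for primes with $(-a_Aa_Bd_Ad_B\,|\,p)=1$, of which Linnik's theorem supplies $\gg X^{1/4-o(1)}H^{-2}$ up to $Y=(H^2X)^{1/4}$; in either case ($c_3\equiv 0$ or not) at least one of $S_A\bmod p$, $S_B\bmod p$ misses a fixed proportion ($\ge p/8$) of classes, so by pigeonhole one of them does so for half these primes, and the large sieve on that preimage set (interval of length $\ll HX^{1/2}$) gives $\ll H^3X^{1/4+o(1)}$. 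So the paper's mechanism and your repaired endgame land in the same place ($X^{1/4+o(1)}$, comfortably below $X^{1/3}$); your value-set/disjointness analysis replaces the paper's explicit factorization and Linnik input, but as written the proposal does not prove the lemma.
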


\begin{proof}[Proof of Theorem \ref{ils-ostmann} given Lemma \ref{lem6.2}]
Let $\A, \B$ be two sets of positive integers with $|\A|, |\B| \geq 2$ such that $\A + \B$ coincides with the set of primes on $[X_0,\infty)$. 

We claim that there are infinitely many $X$ such that either $|\A[X]|$ or $|\B[X]|$ has cardinality at most $X^{1/2-c}$. This, however, is contrary to a theorem\footnote{The state-of-the-art here is $|\A[X]| \gg X^{1/2}/\log X \log\log X$: see \cite{elsholtz-harper}.} of Elsholtz \cite{elsholtz}, which implies that $|\A[X]|, |\B[X]| \gg X^{1/2} \log^{-5} X$ for all sufficiently large $X$. 

It remains to prove the claim. Let $\rho$ be as in Lemma \ref{lem6.2}. For each $X$, write $A := \A \cap (X^{1/4},X]$ and $B := \B \cap (X^{1/4}, X]$.  If $p \in [X_0, X^{1/4}]$ then $A + B$ contains no multiple of $p$, since any such number would be a nontrivial multiple of $p$ (and hence composite) and lies in $\A + \B$. For these primes $p$, then, we have $|A \md{p}| + |B \md{p}| \leq p$, since $B \md{p}$ cannot intersect $(-A) \md{p}$. Assuming Conjecture \ref{stability-conj}, for each $X$ one of the two options (i) or (ii) of that conjecture holds. 

If (i) holds for infinitely many $X$ then without loss of generality we have $|A \cap [X^{1/2}]| \ll X^{1/4 - c}$ for infinitely many $X$. By Elsholtz \cite{elsholtz} we have $|\A[X^{1/4}]| \ll X^{1/8 + o(1)}$, and therefore $|\A[X^{1/2}]| \ll X^{1/4 - c}$ for infinitely many $X$, thereby establishing the claim. 

Suppose, then, that (ii) holds for all sufficiently large $X$. That is, there are rational quadratics $\psi_{A}, \psi_{B}$ of height $X^{\rho}$ such that $|A\setminus \psi_{A}(\Q)|, |B \setminus \psi_{B}(\Q)| \leq X^{1/2-c}$. Write $A' := A \cap \psi_{A}(\Q)$ and $B' := B \cap \psi_{B}(\Q)$. Certainly $A' + B'$ contains no composite numbers. By Lemma \ref{lem6.2}, for all $X$ at least one of $A', B'$ has cardinality $\ll X^{1/3}$, and this means that indeed either $\A[X]$ or $\B[X]$ has size at most $X^{1/2-c}$ for infinitely many $X$.
\end{proof}

\begin{proof}[Proof of Lemma \ref{lem6.2}] Write $\psi_{A}(x) =  \frac{1}{d_A}(a_A x^2 + b_A x + c_A)$, $\psi_{B}(x) = \frac{1}{d_B}(a_B x^2 + b_B x + c_B)$. Here $a_A, a_B, b_A, b_B, c_A, c_B, d_A, d_B $ are integers, all of magnitude at most $H = X^{\rho}$. Set $Y := (H^{2}X)^{1/4}$. If $A + B$ contains no composite number then the set $(A \cap (Y, X]) + (B \cap (Y, X])$ contains no multiple of any prime $p \leq Y$. Note also that $\psi_{A}^{-1}(\Z) \subset \frac{1}{a_A}\Z$ and $\psi_{B}^{-1}(\Z) \subset \frac{1}{a_B} \Z$. Set
\[ S_A := \{ x \in \Z : \psi_{A}(\frac{x}{a_A}) \in A \cap (Y, X] \}, \quad S_B := \{ x \in \Z : \psi_{B}(\frac{x}{a_B}) \in B \cap (Y, X] \},\] and note that $\psi_{A}(\frac{x_A}{a_A}) + \psi_{B}(\frac{x_B}{a_B}) \neq 0 \md{p}$ whenever $x_A \in S_A$, $x_B \in S_B$, and $p \leq Y$ is a prime. 
To prove Lemma \ref{lem6.2}, it suffices to show that either $|S_A|$ or $|S_B|$ has size $\ll X^{1/3}$.
Note furthermore (by completing the square) that $S_A,S_B \subset [-4(H^{2}X)^{1/2}, 4(H^{2}X)^{1/2}]$.

We will focus attention only on those primes $p \leq Y$ for which $(-a_A a_B d_A d_B | p) = 1$, that is to say for which $-a_A a_B d_A d_B$ is a square modulo $p$. We look for such primes amongst the $p \leq Y$ with $p \equiv 1 \md{8}$. Since both $-1$ and $2$ are squares modulo such a prime, it certainly suffices to additionally ensure that $(q_1 |p) = 1, \dots, (q_k | p) = 1$, where $q_1,\dots, q_k$ are the distinct odd primes appearing in $a_A a_B d_A d_B$. This is equivalent to the union of $(q_1 - 1) \dots (q_k - 1)/2^k$ congruence conditions modulo $q_1 \dots q_k$. Together with the condition $p \equiv 1\md{8}$, we get the union of at least $2^{-k-2} \phi(q)$ congruence conditions modulo $q := 8 q_1 \dots q_k$. Since $|a_A|, |a_B|, |d_A|, |d_B| \leq H$ we have $q \leq 8 H^4$. Now we invoke \cite[Corollary 18.8]{iwaniec-kowalski}, a quantitative version of Linnik's theorem on the least prime in an arithmetic progression, which implies that there are at least $\frac{Y}{\phi(q)\sqrt{q}\log Y}$ primes $p \leq Y$ satisfying each of these congruence conditions, and hence $\gg  \frac{2^{-k}Y}{\sqrt{q}\log Y}$ such primes in total. (Here we used the fact that $H = X^{\rho}$ with $\rho$ sufficiently small.) Write $\mathscr{P}$ for the set of such primes, thus $|\mathscr{P}| \gg X^{1/4 - o(1)}H^{-2}$.

Now suppose that $p$ is such a prime and that $-a_A d_A/a_B d_B \equiv m^2 \md{p}$. Then we have
\begin{align*} \psi_A(\frac{x_A}{a_A}) + \psi_B (\frac{x_B}{a_B}) = \frac{1}{a_A d_A} (x_{A}^{2} + b_{A}x_{A} + a_{A}c_{A}) & + \frac{1}{a_B d_B} (x_{B}^{2} + b_{B}x_{B} + a_{B}c_{B}) \\ & \equiv \frac{1}{a_A d_A} \big(( x_A + c_1)^2 - (mx_B + c_2)^2 + c_3\big), \end{align*} where $c_1,c_2,c_3$ do not depend on $x_A, x_B$. Therefore for each prime $p \in \mathscr{P}$ we have one of the following alternatives.
\begin{enumerate}
\item $c_3 \equiv 0$ modulo $p$. Then whenever $x_A \in S_A \md{p}$ we must have $\frac{1}{m}(x_A + c_1 - c_2) \notin S_B \md{p}$, whence $|S_A \md{p}| + |S_B \md{p}| \leq p$.
\item $c_3 \not\equiv 0$ modulo $p$. Then we have $\psi_{A}(\frac{x_A}{a_A}) + \psi_B(\frac{x_B}{a_B}) \equiv 0 \md{p}$ whenever
\[ (x_A + mx_B + c_1 + c_2)(x_A - mx_B + c_1 - c_2) \equiv -c_3,\]
an equation which has $p-1$ solutions $(x_A, x_B)$. This solution set must be disjoint from $S_A \times S_B$. 
Since to each $x_A$ there are at most two $x_B$, and to each $x_B$ there are at most two $x_A$, this forces at least one of $S_A \md{p}, S_B \md{p}$ to have size $\leq 7p/8$, say. 
\end{enumerate}
In both cases at least one of $S_A \md{p}, S_B \md{p}$ has size $\leq 7p/8$. Without loss of generality the first holds for at least half the elements of $\mathscr{P}$. Finally the large sieve, as in Proposition \ref{classicls}, tells us that $|S_A| \ll \frac{(H^{2}X)^{1/2}}{|\mathscr{P}|} \ll H^3X^{1/4 + o(1)}$. If $\rho$ is small enough then this is $\ll X^{1/3}$, as required.
\end{proof}

\appendix

\section{Basic facts about rational quadratics}
\label{rat-quad-app}

\begin{lemma}\label{tedious-local-lemma}
Suppose that $\psi$ is a rational quadratic such that $\psi(\Q) \cap \Z$ is nonempty. Then there is another rational quadratic $\tilde\psi$ with $\psi(\Q) = \tilde\psi(\Q)$ such that $\psi(\Q) \cap \Z \subset \tilde\psi(\Z)$. Furthermore, for all sufficiently large primes $p$ the reductions $\mdlem{p}$ of $\psi(\Q) \cap \Z$ and of $\tilde\psi(\Z)$ are the same. 
\end{lemma}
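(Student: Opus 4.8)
The plan is to complete the square to put $\psi$ in a standard form and then write down $\tilde\psi$ by inspection. Write $\psi(x) = \tfrac1d(ax^2+bx+c)$ with $a\neq 0$, put $m := 4ad$ and $\Delta := b^2-4ac$, and use the identity $4a\,\psi(x) = \tfrac1d\big((2ax+b)^2-\Delta\big)$, i.e. $\psi(x) = \tfrac{(2ax+b)^2-\Delta}{m}$. Since $x\mapsto 2ax+b$ is a bijection of $\Q$, this gives $\psi(\Q) = \{(v^2-\Delta)/m : v\in\Q\}$. I would then take
\[ \tilde\psi(t) := \frac{t^2-\Delta}{m} = \frac{1}{m}\big(t^2+0\cdot t-\Delta\big), \]
which is a rational quadratic, and the same identity shows $\tilde\psi(\Q) = \psi(\Q)$.

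Next I would identify $\psi(\Q)\cap\Z$. If $v=r/s$ in lowest terms and $(v^2-\Delta)/m\in\Z$, then $ms^2\mid r^2-\Delta s^2$, whence $s^2\mid r^2$ and so, since $\gcd(r,s)=1$, $s=1$. Therefore $\psi(\Q)\cap\Z = \{(r^2-\Delta)/m : r\in\Z,\ m\mid r^2-\Delta\}$, which is by inspection contained in $\tilde\psi(\Z)=\{(n^2-\Delta)/m : n\in\Z\}$. This settles the first two assertions.

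For the congruence statement, fix any prime $p\nmid m$ (so any $p>|m|$ will do). Every element of $\tilde\psi(\Z)$ has denominator dividing $m$, hence is a $p$-adic integer, and $\tilde\psi(n)\bmod p$ depends only on $n\bmod p$; thus the reduction of $\tilde\psi(\Z)$ modulo $p$ is $\{(n^2-\Delta)m^{-1}\bmod p : n\in\Z/p\Z\}$, where $m^{-1}$ is taken modulo $p$. On the other hand the set $R:=\{r\in\Z : m\mid r^2-\Delta\}$ is nonempty (since $\psi(\Q)\cap\Z\neq\emptyset$) and is a union of residue classes modulo $m$; because $\gcd(m,p)=1$, each such class surjects onto $\Z/p\Z$, so $R$ reduces onto all of $\Z/p\Z$. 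Hence the reduction of $\psi(\Q)\cap\Z = \{(r^2-\Delta)/m : r\in R\}$ modulo $p$ is also $\{(n^2-\Delta)m^{-1}\bmod p : n\in\Z/p\Z\}$, and the two reductions agree.

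I do not expect any genuine obstacle: the content is entirely the elementary denominator analysis forcing $s=1$, together with the observation that $R$ is a union of full residue classes modulo $m$ and so becomes all of $\Z/p\Z$ for any prime $p\nmid m$. The only thing to watch is the routine bookkeeping with the substitution $v=2ax+b$ and the exclusion of the finitely many primes dividing $m$ — presumably the reason for the adjective ``tedious'' — even though the explicit choice $\tilde\psi(t)=\tfrac1m(t^2-\Delta)$ handles everything at once.
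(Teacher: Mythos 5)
Your proof is correct and follows essentially the same route as the paper: an affine reparametrisation of $\psi$ to clear denominators (the paper takes $\tilde\psi(x)=\psi(x/a)$, you take the completed square $\tilde\psi(t)=\frac{1}{4ad}(t^2-\Delta)=\psi\bigl(\frac{t-b}{2a}\bigr)$), a lowest-terms denominator argument giving $\psi(\Q)\cap\Z\subset\tilde\psi(\Z)$, and the observation that a nonempty union of full residue classes modulo a fixed integer surjects onto $\Z/p\Z$ once $p$ does not divide that integer. The only difference is cosmetic: you determine $\psi(\Q)\cap\Z$ exactly as $\tilde\psi(R)$ with $R$ a union of classes mod $4ad$, whereas the paper gets by with the single progression $ax_0+ad\Z$ coming from one integer value $\psi(x_0)$.
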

\begin{proof}
Write $\psi(x) = \frac{1}{d}(a x^2 + bx + c)$ with $a,b,c,d \in \Z$, and simply define $\tilde\psi(x) := \psi(\frac{1}{a} x)$.  The first property, that $\psi(\Q) = \tilde\psi(\Q)$, is immediate. Moreover if $\psi(u/v)$ is an integer, with $u/v$ a rational in lowest terms, then $v | a$. It follows that $\psi(\Q) \cap \Z \subset \psi(\frac{1}{a} \Z) = \tilde\psi(\Z)$, the second required property. 

To get the last statement (about reductions mod $p$), let $x_0$ be a rational such that $\psi(x_0) \in \Z$ and write $x_0 := r/s$ in lowest terms. Then $\psi(x_0 + d\Z) \subset \Z$ (since $s | a$, as noted above), and so $\tilde\psi(ax_0 + ad\Z) \subset \Z$. Thus, writing $P \subset \Z$ for the infinite arithmetic progression $ax_0 + ad\Z$, we see that $\tilde\psi(P) \subset \psi(\Q) \cap \Z$. However for $p$ a sufficiently large prime, $P \md{p}$ is all of $\Z/p\Z$, thereby concluding the proof.
\end{proof}

\end{document}